\theoremstyle{definition}
\newtheorem{definition}{Definition}[section]
\newtheorem{remark}{Remark}[section]
\newtheorem{conjecture}{Conjecture}[section]
\newtheorem{theorem}{Theorem}[section]
\title{Solving the non-preemptive two queue polling model with generally distributed service and switch-over durations and Poisson arrivals as a Semi-Markov Decision Process}
\author{Dylan Solms\\
	Department of Decision Sciences\\
	University of South Africa\\
	South Africa \\
	\texttt{62652257@mylife.unisa.ac.za} \\
	%% examples of more authors
	%\And
	%\href{https://orcid.org/0000-0000-0000-0000}{\includegraphics[scal%e=0.06]{}\hspace{1mm}Elias D.~Striatum} \\
	%Department of Electrical Engineering\\
	%Mount-Sheikh University\\
	%Santa Narimana, Levand \\
	%\texttt{stariate@ee.mount-sheikh.edu} \\
	%% \AND
	%% Coauthor \\
	%% Affiliation \\
	%% Address \\
	%% \texttt{email} \\
	%% \And
	%% Coauthor \\
	%% Affiliation \\
	%% Address \\
	%% \texttt{email} \\
	%% \And
	%% Coauthor \\
	%% Affiliation \\
	%% Address \\
	%% \texttt{email} \\
}
\begin{document}
\maketitle

\begin{abstract}
The polling system with switch-over durations is a useful model with several practical applications. It is classified as a \emph{Discrete Event Dynamic System} (DEDS) for which no one agreed upon modelling approach exists. Furthermore, DEDS are quite complex. To date, the most sophisticated approach to modelling the polling system of interest has been a \emph{Continuous-time Markov Decision Process} (CTMDP). This paper presents a \emph{Semi-Markov Decision Process} (SMDP) formulation of the polling system as to introduce additional modelling power. Such power comes at the expense of truncation errors and expensive numerical integrals which naturally leads to the question of whether the SMDP policy provides a worthwhile advantage. To further add to this scenario, it is shown how sparsity can be exploited in the CTMDP to develop a computationally efficient model. The discounted performance of the SMDP and CTMDP policies are evaluated using a Semi-Markov Process simulator. The two policies are accompanied by a heuristic policy specifically developed for this polling system a well as an exhaustive service policy. Parametric and non-parametric hypothesis tests are used to test whether differences in performance are statistically significant.
\end{abstract}

% keywords can be removed
\keywords{Semi-Markov Decision Process \and Continuous-time Markov Decision Process \and queuing-theory \and performance evaluation \and hypothesis tests \and simulation \and Dynamic Programming}
\tableofcontents

\newpage

\section{Introduction}

The field of \emph{Operations Research} (OR) is often interested in the \emph{performance evaluation} or \emph{control} of industrial systems such as re-entrant manufacturing lines \cite{chen_thesis}, signalised traffic intersections \cite{fleck2015adaptive} or communication networks \cite{GSMDP_admission_control}. These dynamic systems consist of discrete entities that produce abrupt changes in the state of the overall system due to the completion of a timed event i.e. a sensor registering an arrival of a vehicle to a traffic intersection. These are referred to as \emph{Discrete Event Dynamic Systems} (DEDS) and are observed to evolve temporally in a piecewise continuous fashion which is in contrast to \emph{Continuous Variable Dynamic Systems} (CVDS) (see figure~\ref{fig:CVDS_vs_DEDS}).

While physics and engineering have garnered much success studying CVDS under the framework of difference and differential equations, the same cannot be said for the use of DEDS in OR. The state of DEDS in the late eighties is summed up by \cite{fleming1988report}:
\begin{displayquote}
Discrete-event dynamical systems exist in many technological applications, but there are no models of discrete-event systems that are mathematically as concise or computationally as feasible as are differential equations for CVDS. There is no agreement as to which is the best model, particularly for the purpose of control.
\end{displayquote}
\begin{figure}[!htbp]
    \begin{center}
        \includegraphics[width=0.4\textwidth]{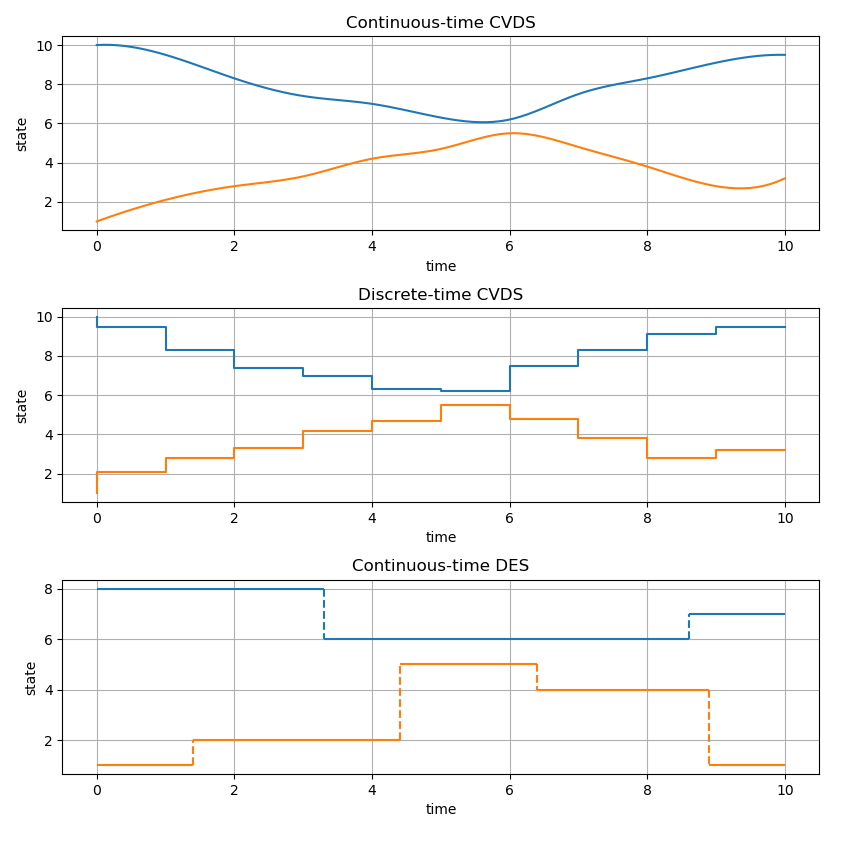}
    \end{center}
    \caption{Sample paths of continuous-time CVDS (top), discrete-time CVDS (middle) and continuous-time DEDS (bottom).}
    \label{fig:CVDS_vs_DEDS}
\end{figure}
This continues to remain true. Table \ref{tab:Different Models of DEDS} presents the diversity present among of the main models used in DEDS\footnote{A survey of these DEDS models applied to a two station production line can be found in \cite{cao1990models}}.

\begin{table}[!htbp]
    \centering
    \begin{tabularx}{0.8\textwidth} { 
    | >{\centering\arraybackslash}X 
    | >{\centering\arraybackslash}X 
    | >{\centering\arraybackslash}X | }
    \hline 
         & \textbf{Timed} & \textbf{Untimed}  \\
         \hline
        \textbf{Logical} &
            Temporal Logic, Timed Petri Net & Finite State Machine, Petri Net\\
        \hline
        \textbf{Algebraic} & Min-Max Algebra & Finitely Recursive Processes\\
        \hline
        \textbf{Performance} & Markov Chains, Generalised Semi-Markov Process, Queue Networks, Stochastic Petri Nets & \\
        \hline
    \end{tabularx}
    \caption{Different Models of DEDS \cite{hoPAbook}}
    \label{tab:Different Models of DEDS}
\end{table}
DEDS are difficult to evaluate or control because they have multiple event processes evolving in an \emph{asynchronous} and \emph{concurrent} manner. The former refers to the fact that events triggered by these processes do not occur at the same time while the latter describes the processes to evolve simultaneously (each at their own rate). The concurrent and asynchronous properties naturally embed memory into the DEDS model. This is because once an event triggers the remaining processes do not forget their ages. Naturally, the first-order \emph{Markov property} \cite{stewart2009probability} does not hold.

A consequence of the absence of the Markov property is that powerful frameworks such as that of \emph{Markov Decision Processes} (MDPs) \cite{BertsekasVol2} cannot be used in the optimal control of DEDS. Policies obtained through solving MDPs are convenient as they specify an action for each state of the system. This allows actions to be taken with respect to feedback from the system which is particularly useful if dynamics are stochastic. Some alternative means of DEDS control have used parameterised open-loop policies such as the duration of green-lights at a signalised traffic intersections \cite{howell_2003,howell_2005,geng2012traffic,fleck2015adaptive} as these do not rely on the Markov property. The parameters of these policies are optimised for using \emph{off-line} or \emph{on-line} simulations-based optimisation techniques. Off-line methods include Kiefer-Wolfowitz finite differences or simultaneous perturbation stochastic approximation \cite{spall1998_SPSA} while perturbation analysis allows for on-line optimisation \cite{howell_2003,howell_2005,geng2012traffic,fleck2015adaptive}. Other approaches include identifying an optimal or ideal trajectory for the system to follow as in chapter 5 of \cite{van_eekelen}. Two controllers or polices are required: one for maintaining the optimal trajectory and another corrective policy for steering the system towards the optimal trajectory. Such an approach may spend most of its time implementing the corrective policy which itself may not be optimal in terms of performing correction in minimum time. This occurs of the corrective policy is a stabilising policy as in \cite{van_eekelen}. Corrective policies can, however, be optimised for at an additional computational cost \cite{van_zwieten_two_queue}. Markov policies can be thought of has having both optimal corrective and maintenance elements.

The Markov property can be regained by making certain modelling assumptions such as assuming all event durations to be \emph{memoryless}. This can only be achieved through using the exponential distribution in continuous-time or the geometric distribution in discrete time \cite{stewart2009probability}. Focusing in continuous-tome applications, the DEDS is abstracted as a \emph{Continuous-time Markov Chain} (CTMC) which can be converted to a \emph{Discrete-time Markov Chain} (DTMC) via \emph{uniformisation} (see chapter 1.4 of \cite{BertsekasVol2,suk1991optimal}) and solved for as a MDP. This approach forgoes modelling the asynchronous and concurrent dynamics of DEDS in return for the Markov property. 

The method of phases has been used in attempting to retain or approximate asynchronous and concurrent features of the original DEDS when converting it to a CTMC, DTMC and MDP \cite{younes_thesis,younes_GSMP_formalism,younes_GSMPD_solving1,younes_GSMPD_solving2}. This results in fictitious states in the policy. Such states cannot directly be observed which makes policy execution an issue related to \emph{Partially Observable Markov Decision Processes} (POMDPs) \cite{POMDP_thesis}. Policy execution has usually been implemented using the $Q_{MDP}$ heuristic as in \cite{younes_GSMPD_solving2} which makes the assumption that all uncertainty vanishes after an action is performed. Failure to fulfill this assumption may lead to sub-optimal outcomes as found in the empirical experiments of chapter 6 in \cite{POMDP_thesis}. Further issues pertaining to the method of phases approach includes additional state-space complexity, model-bias and model uncertainty which are discussed in \cite{chen_GSMDP_paper} and chapter 2.4.2 of \cite{chen_thesis}. 

This paper presents a method of obtaining a MDP policy without assuming all processes to be memoryless as to retain some of the asynchronous and concurrent features of DEDS without introducing any additional complications such as with the method of phases. This is done through identifying event processes that occur in a serial manner. These serial processes have their event time distributions unmodified while all other process are approximated using exponential distributions. The outcome is a \emph{Semi-Markov Process} (SMP) where decisions can be made at points in its \emph{embedded chain} where the Markov property holds such that a MDP can be solved for. This approach is limited by constraining all decisions to be \emph{non-preemptive} (cannot be interrupted) and relies on serial processes to be manually identified. Hence, it is not a complete substitute for the other two approaches. 

This \emph{Semi-Markov Decision Process} (SMDP) approach is illustrated on a non-preemptive two-queue polling model with generally distributed service and switch-over durations and Poisson arrivals.

\section{Background}

\subsection{Polling models}\label{section: polling models}

A polling system\index{polling system} is model in which a \emph{common} resource is sequentially allocated among different classes of waiting entities (scheduling definition). This is synonymous to a \emph{single} server attending multiple parallel queues in some order (queuing theory definition). It is stressed that resources are traditionally \emph{allocated and not distributed}. The latter would refer to the resource being divided and dispensed among a subset of classes.

\subsubsection{Control framework}\label{section: control framework}

Polling models come equipped with a well established \emph{control framework}\footnote{A convenient recipe to follow.} which dictates that decisions be made with regards to the following \cite{terekhov2014queueing}.
\begin{enumerate}
    \item \textbf{Polling order:} the sequence of visits to each of the queues. This may be \emph{fixed} which is the case if a \emph{cyclic} policy is used. The non-weighted Round-Robin scheduling scheme makes use of such a cyclic order. If the order is \emph{dynamic} then it should be in response to the state of the system or due to the scheduling algorithm being of \emph{online} discipline. The weighted Round-Robin scheduling scheme assigns a dynamic polling order.
    \item \textbf{Service policy per queue:} when the server is at a queue, a decision has to be made on how \emph{many} customers to serve or for what \emph{duration}. Time-based service does not require any sensors and only some clock. The duration of service is often fixed. Count-based policies commonly follow the \emph{exhaustive} \cite{liu1992optimal} or \emph{gated} \cite{boxma1991_polling_table} discipline.
    \begin{enumerate}
        \item \emph{Exhaustive\index{Exhaustive} policies} serve the queue until depletion. More often than not, this is optimal policy with respect to optimising a metric associated with system efficiency.
        \item \emph{Gated\index{Gated} policies} serve all customers in the queue that are present at the time of server arrival. Further arriving customers are "gated" off and will receive service at the next server visit. Gated policies are usually selected if fairness is a concern \cite{terekhov2014queueing}.
        \item \emph{Threshold/index} policies \cite{duenyas_van_oyen_2000_finite_buffer_heuristic,duenyas1996heuristic,van_oyen_setup_costs_with_arrivals} serve the queue until its length is equal to or below some integer. Exhaustive policies are a special case where the index is set to zero.
    \end{enumerate}
      Further decisions that need to made pertain to \emph{idling\index{idling}} \cite{duenyas1996heuristic,duenyas_van_oyen_2000_finite_buffer_heuristic,liu1992optimal} - the act of the server not performing any service. 
      \begin{enumerate}
          \item A \emph{non-idling} server never idles at an empty queue while an \emph{idling} server does.
          \item A \emph{greedy\index{greedy} sever} never idles at a non-empty queue.
          \item An idling server is \emph{patient} as it remains at an empty queue. An \emph{impatient} server is non-idling and leaves an empty queue.
      \end{enumerate}
      If the polling system has non-empty queues and the server idles at an empty queue then the non-empty queues are \emph{deadlocked} and the system is in \emph{starvation}.
    \item \textbf{Service policy within each queue:} if a queue happens to have different classes of customers then a \emph{First-Come-First-Serve\index{First-Come-First-Serve}} (FCFS) discipline may be replaced by some other discipline such as Last-Come-First-Serve or Shorted-Processing-Time \cite{harchol2013performance}. A tailored sequence may even be prescribed.
    \item \textbf{Service policy per customer:} a decision has to be made on whether service can be interrupted or not.
    \begin{enumerate}
        \item \emph{Preemptive\index{Preemptive}}: service can be interrupted and allocated to another customer from a different class. If the server returns to this customer and does not resume service then it is \emph{non-work-conserving}\footnote{Note that idling does not play a role in work conservation: a server may not be performing any work during this time but no worked completed is lost or destroyed.}.
        \item \emph{Non-preemptive\index{Non-preemptive}}: once service is allocated to a customer, the server commits until completion. Naturally, this policy is always work-conserving.
    \end{enumerate}
\end{enumerate}

\subsubsection{Notation}\label{section: notation}

This paper will focus on a non-preemptive two queue polling model with generally distributed service and switch-over durations and Poisson arrivals as shown in figure~\ref{fig:polling model with switching diagram}. More specifically, $\lambda_i \in \mathbb{R}_{>0}$ denotes the arrival rate of queue $i$ customers such that an arrival duration $t_{\lambda_i}\sim Exp(\lambda_i)$ is memoryless $\mathscr{P}\left(t\leq \Delta_1 + \Delta_2 \mid t> \Delta_1\right) = \mathscr{P}\left(t\leq\Delta_2\right) =1 - \exp \left\{- \lambda_i \Delta_2 \right\}$. Service processes $\mu_i$ have arbitrary cumulative distribution functions $F_{\mu_i}:\mathbb{R}_{\geq 0} \to \mathbb{R}\cap[0,1]$ such that $t_{\mu_i}\sim F_{\mu_i}$ with mean duration $\bar{t}_{\mu_i} = \mu_i^{-1}$. Switch-over processes $s_{i,j}$ also have arbitrary cumulative distribution functions $F_{s_{i,j}}:\mathbb{R}_{\geq 0} \to \mathbb{R}\cap[0,1]$ such that $t_{s_{i,j}}\sim F_{s_{i,j}}$ with mean duration $\bar{s}_{i,j} = s_{i,j}^{-1}$. Queue lengths are denoted by $n_i \in \mathbb{N}_{0}$.

\begin{figure}[!htbp]
    \centering
    \includegraphics[width=0.7\textwidth]{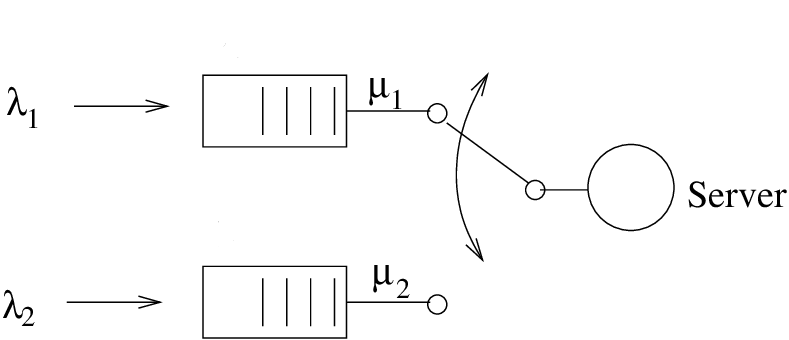}
    \caption{Two-queue polling model with switch-over durations\index{switch-over durations} where $\mu_i$ denotes a service process and $\lambda_i$ an arrival process.}
    \label{fig:polling model with switching diagram}
\end{figure}

The state-space is represented by the tuple $x = \left(n_1,n_2,l_1\right)$ where $l_1 \in \{1,2\}$ is the location of the server such that $x \in \mathcal{X} \subseteq
\mathbb{N}_0 \times \mathbb{N}_0 \times \{1,2\}$. Arrival processes are always ongoing while at most one of $\mu_1,\mu_2,s_{1,2}$ or $s_{2,1}$ can be active at a time and is selected from $\mathcal{E} = \{\mu_1,\mu_2,s_{1,2},s_{2,1},\Lambda_1,\Lambda_2  \}$. Here $\Lambda_i$ denotes the act of a server idling at queue $i$. A server idles until either an arrival to queue 1 or queue 2 occurs. During idling, the system is a CTMC as opposed to a SMDP. Selected actions change the state of the server $l_2 \in \{0,1,2\}$ which translates to idling/free, serving and switching. These actions cannot be interrupted, As such, decision on what to make the server do can only be selected at the instant a server becomes free $l_2=0$. The expanded state-space include the state of the server $\mathcal{Y} = \mathcal{X}\times \{0,1,2\}$. 

Furthermore, not all actions can always be selected. The set of feasible actions depends on the queue lengths and position of the server such that $l_2 \in \mathscr{A}\left(n_1,n_2,l_1\right)\subset \mathscr{A} =\{0,1,2\} $. For example $\mathscr{A}(0,2,1) = \{0,2\}$ such that $\mu_1$ is infeasible. Note that idling or switching is always feasible. Due to the non-preemptive service discipline, it is understood that $\mathscr{A}(x)$ is only consulted when $l_2=0$. This is analogous to defining $\underline{\mathscr{A}}(n_1,n_2,l_1,l_2)$ over the extended state-space such that it returns the empty set $\varnothing$ when $l_2 \neq 0$ and $\mathscr{A}(n_1,n_2,l_2)$ otherwise.

Each queue $i$ customer incurs a holding cost during its time spent in the queue at a finite positive rate $c_i$. Finite non-negative switching costs $K_{i,j}$ can be included in the switching decision. These can represent an additional penalty other than the temporal one such as risk incurred or resources required for the setup of queue $j$.

Lastly, stability of a polling system is important. A necessary condition for stability is that $\rho = \rho_1 + \rho_2 < 1$ \cite{duenyas1996heuristic} where $\rho_i = \lambda_i/\mu_i$. Here, $\rho$ is the \emph{utilisation} of the server and represents the long-run average proportion of time that the server must perform work (be busy) for stability. Ideally, the utilisation realised under the policy $\phi$ is less than this such that $\rho_{\phi}\leq \rho < 1$.

\subsubsection{Literature review}\label{section: literature review}

The study of the two queue polling model can be divided into three main general categories: without switch-over penalties or durations, with switching costs (e.g. $K_{i,j}$) and with switch-over durations (e.g. $t_{s_{i,j}}$).

\paragraph{Without switch-over penalties and durations} \hfill \break
The optimal policy for a polling model with $n$ queues in the case of no switching is given by the $\mu c$-rule. This is a static rule that minimises the \emph{linear} holding cost for any arbitrary random arrival process \cite{baras1985_mu_c_rule,baras1985two,liu1995_sample_path_methods}, deterministic arrivals \cite{smith1956deterministic_arrival} or no arrivals \cite{BertsekasVol2} given that the service distribution is \emph{geometric} \cite{baras1985_mu_c_rule,baras1985two} in the discrete time setting or \emph{exponential} \cite{BertsekasVol2,cassandras_book,liu1995_sample_path_methods} in the continuous-time case. It has also been shown to be optimal under both the non-preemptive\index{non-preemptive} \cite{fernandesscheduling} and preemptive\index{preemptive} server disciplines \cite{baras1985two}. It does not hold under finite buffer queues \cite{kim_van_oyen_loss_queue,suk1991optimal} or non-linear holding costs \cite{mu_c_rule_convex_costs}.

The $\mu c$-rule is a Gittin's index\index{Gittin's index} policy \cite{gittins1979bandit,MAB_gittins_tutorial} where the characteristics that define the index for each bandit/queue does not change upon being played as long as the queue has at least one customer. Each queue has an index $\nu_i:\mathbb{N}_{0}\times \mathbb{R}_{> 0} \times \mathbb{R}_{> 0} \to \mathbb{R}_{\geq 0}$ defined as
\begin{equation}
    \nu_i(x,\mu,c) = \begin{cases}
    \mu c, & x > 0\\
    0 & x = 0
    \end{cases}
\end{equation}
such that $\nu_i(\cdot)$ is the reward rate associated with serving queue $i$. It follows that the queue with the highest index is served \emph{exhaustively}. Once exhausted, its index should drop to the lowest possible value of zero upon which the new highest index is served. At the instant of an arrival that re-activates a queue with a higher index than the one currently in service, a preemptive server will immediately abandon its current job while a non-preemptive server will complete the current job and then switch without delay. As such, the $\mu c$ rule is observed to switch server positions frequently \cite{tava_practical}, it may not be considered an overall fair policy if it always switches to the favoured queue. 

\paragraph{With switching costs} \hfill \break

Switching costs are elements of a model. As such, they may approximate reality or represent it exactly. An example of setup costs being used as \emph{approximations} can be found in \cite{van_oyen_setup_costs_with_arrivals} where setup costs are explained to play a role in \emph{Just-in-time} (JIT) production. These are scenarios where preparation/setup work is done while the sever is serving queue $i$ in order to make the switching duration from queue $i$ to $j$ almost negligible. Setup costs that \emph{exactly} represent reality can occur in a scenario where diggings may occur on a site to search for minerals as in example 1.3.1 of \cite{BertsekasVol2}. The equipment can only be on one site at a time. To move from one site to another involves a large transportation cost of equipment. 

In comparison to the $\mu c$-rule, these policies should aim to minimise the number of times the server switches. This is not the case with the $\mu c$-rule which can be found switching almost infinitely often \cite{tava_practical}. Furthermore, unlike the $\mu c$-rule no closed-form scheduling rule has been found.

Numerical methods such as Value Iteration have been used to obtain policies for when the polling system can be modelled as a MDP. This requires arrivals and services durations to follow the exponential distribution. Such a approach has been used in \cite{koole1997assigning,moustafa1996optimal}. Both papers obtain switching curves from their numerical policies. The existence of such curves are proved under linear holding costs using monotonicity properties identified in the uniformised Bellman equations such as submodularity. Additionally, \cite{koole1997assigning} identifies that the switching curve\index{switching curve} becomes constant once it reaches a threshold queue length with respect to the priority queue (the one with the larger $\mu c$ value). Such a result is used in obtaining an computationally inexpensive heuristic policy.

A departure is taken from Dynamic Programming and the Bellman equations in \cite{van_oyen_1992_no_arrivals}. Sample-path arguments are used to establish optimality of exhaustive service and develop heuristic Gittin's indices to choose which queue to switch to one the current one has been depleted. The discounted Gittin's index\index{Gittin's index} assumes the system to be ergodic and uses renewal-reward theory in its derivation such that
\begin{eqnarray}
    \nu_{i}^{\beta}(x_i,\mu_i,c_i) & = & \frac{\mbox{expected discounted reward earned exhausting queue }i}{\mbox{expected discounted time to exhaust queue }i}. \label{eq:duenyas}
\end{eqnarray}
where no assumption is made on the service distribution of the non-preemptive server. It is, however, presumed that all switching costs are the same and that no arrivals occur. The approach is extended to a system with Poisson distributed arrivals in \cite{van_oyen_setup_costs_with_arrivals}.

\paragraph{With switch-over durations} \hfill \break
In the seminal paper on the study of polling models with switching/setup times \cite{hofri1987optimal}, a two queue model is considered. Exhaustive service is shown to be optimal in both queues using sample-path arguments. Moreover, idling is proven optimal such that the server switches only when the current queue is empty and the other has exceeded a threshold. Both queues have such a threshold, hence the term \emph{double-threshold} policy.

The first paper to completely characterise the optimal policy of a polling model with switch-over durations is the work of \cite{liu1992optimal}. It does so for a completely symmetric system of $N$ queues. Each queue has independent and generally distributed arrival, services and switch-over processes. Such a system is not a Semi-Markov\index{Semi-Markov} or Continuous-time Markov Decision Process. Subsequently, sample-path methods were used instead of Dynamic Programming. The following characterised the policy that minimises the total unfinished work in the system: a server will never idle at a non-empty queue, exhaustive service is optimal, the server switches from an empty queue to the \emph{stochastically largest queue} and the server is patient. The last point pertains to the server remaining at its current queue in an empty system.

The heuristics developed for the polling model with switching costs \cite{van_oyen_setup_costs_with_arrivals} that makes use of (\ref{eq:duenyas}) is extended to switching durations for infinite buffer queues \cite{duenyas1996heuristic} and finite buffer queues \cite{duenyas_van_oyen_2000_finite_buffer_heuristic}.

In \cite{browne1989dynamic}, emphasis is placed on the order of queues to be served. The number of customers to be served non-preemptively per queue visit is determined by gated or exhaustive policies. The order of queues visits is determined online through Dynamic Programming with a rolling-horizon\index{rolling-horizon} under the restriction that a queue can be visited only once in this window. This problem is also addressed in \cite{tava_practical}. However, it goes about by developing a heuristic that balances minimising the mean of the waiting time distribution while aiming for light tails. This paper also performed a total of seven simulation experiments in order to compare the performance of their heuristic policy to others. Their policy was outperformed by \cite{duenyas1996heuristic} in terms of the mean waiting time but scored the best in minimising the variance of waiting time.

Heavy traffic approximations have been used to study the two queue polling model with general arrival, service and switch-over durations \cite{reiman1998dynamic}. Exhaustive and non-exhaustive service in a two queue polling model with Markov-modulated Poisson process arrivals were investigated in \cite{narahari1997optimality}. Modified Policy Iteration algorithms can be found in \cite{kim_and_van_oyen_MDP_polling} for solving the exact scheduling policy when the polling model is a CTMC.

In chapter 5 of \cite{van_eekelen}, the two-queue polling model with switch-over durations has been studied as a system of two deterministic fluid queues. In minimising the average work-in-progress for an asymmetric system, an optimal limit-cycle is characterised to take in one of two forms as in figure~\ref{fig:bow tie curves}. The pure bow-tie curve translates to exhaustive service and immediate switching upon a queue being emptied. The truncated bow-tie curve allows for the server to idle at a priority queue $i$ (e.g. $c_i \mu_i > c_j\mu_j$) if $c_i\lambda_i(\rho_i + \rho_j) - (c_i\lambda_i - c_j\lambda_j)(1-\rho_j) < 0 $. The system does not always follow the optimal limit cycle. A separate corrective policy/controller is required to steer it towards the optimal limit cycle. In \cite{van_eekelen} a policy is used that it not time-optimal whereas \cite{van_zwieten_two_queue} computes an optimal controller by solving a quadratic program.

\begin{figure}[!htbp]
    \centering
    \includegraphics[width=0.4\textwidth]{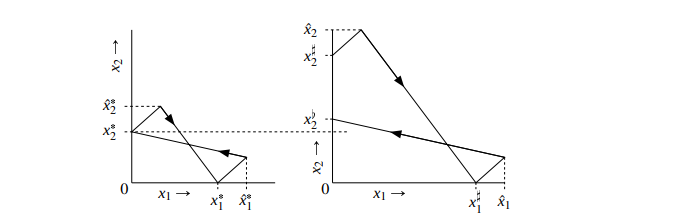}
    \caption{LEFT: pure bow-tie curve RIGHT: truncated bow-tie curve}
    \label{fig:bow tie curves}
\end{figure}

\subsection{Markov Chains}\label{section: markov chains}

A Markov process is a stochastic system where the future is conditionally independent of the past given the current state (chapter 6 of \cite{cassandras_book}). Such processes exhibit the \emph{Markov property} as defined below.

\begin{definition}[\textbf{Markov property} \cite{cassandras_book}]\label{definition:markov property}
Often referred to as a memoryless property, the following two criteria needs to be fulfilled:
\begin{enumerate}
    \item[$\mathbf{P_1}$:] The history of past states is not relevant. In other words, no \emph{state memory} is required.
    \item[$\mathbf{P_2}$:] The duration of time spent waiting in the current state is irrelevant such that no \emph{duration memory} is required.
\end{enumerate}
\end{definition}
If $\mathcal{H}(x_k) = \left\{ (x_0,\Delta t_0),(x_1,\Delta t_1),\cdots,(x_k) \right\}$ denotes the history of states and \emph{waiting durations} up to the instant of entering the current state $x_k$ and $\tau_k$ the amount of time already spent in $x_k$ then the Markov property states that the future state $x_{k+1}$ can be forecast through $\mathbf{P_1}$ such as
\begin{eqnarray}
    \mathscr{P}\left(x_{k+1}=j\mid \mathcal{H}(x_k=i)\right) & = & \mathscr{P}\left(x_{k+1}=j \mid x_{k}=i \right)\label{eq: P1}\\
    & = & P_{i,j}^k \nonumber 
\end{eqnarray}
while $\mathbf{P}_2$ predicts future state $x_{k+1}$ to be reached within $\Delta \tau_k$ units of time
\begin{eqnarray}
    \mathscr{P}\left(x_{k+1}=j,t\leq\tau_k + \Delta \tau_k \mid \mathcal{H}(x_k=i),t>\tau_k\right) & = & \mathscr{P}\left(x_{k+1}=j,t\leq\Delta \tau_k \mid x_{k}=i \right)\label{eq: P2}\\
    & = & P_{i,j}^k\left(\Delta\tau_k\right) \nonumber .
\end{eqnarray}

In continuous time, $\mathbf{P_2}$ implies $\Delta t_k = (\tau_k+\Delta \tau_k) = t_{k+1}-t_k \sim Exp(\lambda)$ where $t_k$ is the time-stamp of the start of the $k^{th}$ waiting interval as opposed to its duration $\Delta t_k$. In certain modelling applications, $\mathbf{P_2}$ is too restrictive and modelling power can be gained from relaxing it. This results in the \emph{Semi-Markov property}.
\begin{definition}[\textbf{Semi-Markov property} \cite{cassandras_book} ]
By relaxing $\mathbf{P_2}$ such that
\begin{eqnarray}
    \mathscr{P}\left(x_{k+1}=j,t\leq\tau_k + \Delta \tau_k \mid \mathcal{H}(x_k=i),t>\tau_k\right) & = & \mathscr{P}\left(x_{k+1}=j,t\leq\Delta \tau_k+\tau_k \mid x_{k}=i,t>\tau_k \right)\label{eq: semi-markov property}\\
    & = & P_{i,j}^k\left(\Delta\tau_k \mid \tau_k\right) \nonumber
\end{eqnarray}
then $\Delta t_k \sim F(\cdot)$ where $F:\mathbb{R}_{\geq 0} \to \mathbb{R}\cap[0,1]$ is an arbitrary cumulative distribution function.
\end{definition}

In the sequel all Markov processes will be assumed to have time-homogeneous transition probabilities. That is $
\forall k_1,k_2 \in \mathbb{N}_{0}$ such that $k_1 \neq k_2$ then $P_{i,j}^{k_1}(\cdot) = P_{i,j}^{k_2}(\cdot)= P_{i,j}(\cdot)$ which holds for equations (\ref{eq: P1})-(\ref{eq: semi-markov property}). Lastly, time-homogeneous \emph{Markov chains} of finite size $N\in \mathbb{N}$ will be presented in matrix form $\mathbf{P}(\cdot) = [P_{i,j}(\cdot)], \quad i,j = 1,2,3,\cdots,N$. These are \emph{stochastic matrices}.
\begin{definition}[\textbf{Stochastic matrix} \cite{stewart2009probability}]
A stochastic matrix $\mathbf{P} = [P_{i,j}]$ has the following properties:
\begin{enumerate}
    \item $P_{i,j}\geq 0, \quad \forall i,j = 1,2,3,\cdots,N$
    \item $\sum_{j=1}^N P_{i,j} = 1, \quad \forall i = 1,2,3,\cdots,N$
    \item Each column of has at least one non-zero entry.
\end{enumerate}
The first two properties require each row to lie \emph{within} the $(N-1)$-dimensional unit simplex $\mathbf{\Delta}^{N-1}$ or \emph{on} $\mathbf{\Delta}^{N}$. The last property eliminates the existence of \emph{ephemeral} states.
\end{definition}

\subsubsection{Discrete-time Markov chain}\label{section:DTMC}
A discrete time Markov chain is sampled at fixed intervals of time. These intervals are usually chosen to be of unit length $\Delta t_k = 1$ such that time is not of importance in analysis. As the duration of time already spent in a state is not of relevance $\mathbf{P_2}$ holds
such that transition probabilities can be computed using (\ref{eq: P1}). 

Initial behaviour of a Markov chain is referred to as \emph{transience}. The transient transition probabilities can be obtained as follows
\begin{eqnarray}
    \mathscr{P}(x_k = j\mid x_0=i) & = & \left(\mathbf{P}^k \right)_{i,j}\\
    & = & \left( \mathbf{P}\mathbf{P}^{k-1} \right)_{i,j}\label{eq:kolomogorov}\\
    & = & \left( \mathbf{P}^{k-1} \mathbf{P} \right)_{i,j}
\end{eqnarray}
where the last two equations are a result of the \emph{Chapman-Kolmogorov equations} (see chapter 9.3 of \cite{stewart2009probability}). This allows for the transient state probabilities to be determined $\phi_j(k) = \mathscr{P}(x_k=j)$ using an initial distribution $\vec{\phi}(0) = [\phi_j(0)]$ such that $\vec{\phi}(k) = \vec{\phi}(0) \mathbf{P}^k$.

Long-term behaviour is analysed using the \emph{limiting} and \emph{stationary} distributions.
\begin{definition}[\textbf{Limiting distribution} \cite{harchol2013performance}]
The \emph{limiting probability} of being in state $j$ is given as
\begin{eqnarray}
    \phi_j & = &  \lim_{n\to\infty}\left\{ \left( \mathbf{P}^n \right)_{i,j} \right\}
\end{eqnarray}
where $\vec{\phi} = [\phi_j]_{j=1}^N \in \mathbf{\Delta}^{N-1}$ and $n\in\mathbb{N}$. As the action of raising $\mathbb{P}$ to the $n^{th}$ power results in averaging over all possible $n$-length trajectories, $\phi_j$ can be considered an \emph{ensemble}-average as opposed to a \emph{time}-average.
\end{definition}

\begin{definition}[\textbf{Stationary distribution} \cite{harchol2013performance}]\label{def: stationary dist}
Solving for $\vec{\phi}$ in the following system of $N+1$ linear equations yields the stationary distribution.
\begin{eqnarray}
    \vec{\phi}  & = & \vec{\phi}\,\mathbf{P}\label{eq:stationary}\\
    \mbox{subject to: } \sum_{j=1}^{N}\phi_j & = &  1
\end{eqnarray}
Note that one of the $N$ equations in (\ref{eq:stationary}) is redundant and can be omitted which results in a well-posed system of $N$ equations with $N$ unknowns.
\end{definition}
The limiting distribution does not always exist as a valid probability distribution i.e. $\vec{\phi} \not \in \mathbf{\Delta}^{N-1}$. It exists if the Markov chain is \emph{ergodic}, in which case the limiting and stationary distribution are equal.
\begin{definition}[\textbf{Ergodic} \cite{harchol2013performance}]\label{def: ergodic}
A Markov chain is ergodic of it satisfies the following three properties:
\begin{enumerate}
    \item \textbf{Irreducible:} a system can reach any state $j \in \mathcal{X}$ from any other state $i \in \mathcal{X}$ such that the initial state chosen is not of long-term importance.
    \item \textbf{Positive recurrence:} ensures that each state is visited infinitely often in the limit such that the process stochastically restarts itself (renewal). Each renewed process counts as an independent sample hence the importance of this property in establishing equivalence between ensemble and time-averages.
    \item \textbf{Aperiodic:} the system is independent of time-steps. More specifically, the absence of a period prevents oscillation which in turn ensures that the ensemble average exists.
\end{enumerate}
\end{definition}
For an ergodic chain, the stationary distribution can be numerically computed using the limiting distribution. In other words, recursively computing the $n^{th}$ power of $\mathbf{P}$ using (\ref{eq:kolomogorov}) until all rows converge to the same vector $\vec{\phi}$ is a simple approach. Another options includes noting that $\vec{\phi}$ is the \emph{normalised eigenvector} of the \emph{left unit eigenvalue} (also the dominant eigenvalue) of $\mathbf{P}$. Performing LU-decomposition on $\mathbf{P}$ allows for two simpler system of linear equations to be solved using Gaussian elimination. More numerical methods can be found in chapter 10.2 and 10.3 of \cite{stewart2009probability}.

\subsubsection{Continuous-time Markov chain}\label{section:CTMC}

A CTMC is a Markov chain where the $k^{th}$ \emph{holding duration} in moving from state $i\to j$ is memoryless $\Delta t_{i,j}^k\sim Exp(\lambda_i)$ where $\lambda_{i,j} \in \vec{\theta}$ is a \emph{rate} parameter and $\vec{\theta}$ is the vector of model parameters. The \emph{waiting duration} is independent of $j$ and memoryless such that $\Delta t_i^k \sim Exp(\gamma_i)$ where $\gamma_i = \max_{j}\left\{ \lambda_{i,j} \right\} = \sum_{j} \lambda_{i,j}$ (chapter 11.4 of \cite{harchol2013performance}). As a result, a CTMC satisfies both $\mathbf{P_1}$ and $\mathbf{P_2}$ such that (\ref{eq: P2}) can be used in describing its dynamics. More specifically, $P_{i,j}(t) = \mathscr{P}(x_{k+1}=j\mid x_{j}=i) \times \mathscr{P}(t\leq\Delta t_i^k ) = q_{i,j}\times \left(1 - \gamma_i \exp\{-\gamma_i t \}\right)$ where $q_i = [q_{i,j}] \in \mathbf{\Delta}^{N-1}$ is the $i^{th}$ row vector from a DTMC that selects the next state $j$ upon entering the current state $i$.

In CTMCs every point in continuous-time exhibits the Markov property as opposed to a DTMC where the system is analysed at discrete epochs. This notion requires some additional notation. Instead if focusing on $x_k$, analysis uses $x(t)$ --- the state in continuous time. As such, $P_{i,j}(s,t) = \mathscr{P}(x(t+s)=j \mid x(s)=i), \forall s \geq 0$. Memorylessness implies $P_{i,j}(t-s) = \mathscr{P}(x(t-s)=j\mid x(0)=i) = P_{i,j}(s,t)$. The transient transition probabilities are obtained as follows
\begin{eqnarray}
    \mathbf{P}(s,t) & = &  e^{\mathbf{Q}(t-s)} \label{eq:ctmc 1}\\
    \mathbf{P}(\Delta t) & = & e^{\mathbf{Q}\Delta t} \label{eq:ctmc 2}
\end{eqnarray}
where $\mathbf{Q} = [\lambda_{i,j}] \in \mathbb{R}^{N\times N}$ is the \emph{generator matrix} such that $\lambda_{i,i} = - \sum_{i\neq j} \lambda_{i,j}$. Equation~(\ref{eq:ctmc 1}) is the solution to $d \mathbf{P}(s,t)/dt = \mathbf{P}(s,t)\mathbf{Q}$. This can be derived from the CTMC Chapman-Kolmogorov equations as in chapter 7.3.4 of \cite{cassandras_book} or chapter 9.10.2 of \cite{stewart2009probability}.
\begin{eqnarray}
    \mathscr{P}\left(x(t)=j\mid x(s)=i\right) & = & \mathbf{P}(s,u)\mathbf{P}(u,t)
\end{eqnarray}
where $0 \leq s \leq u \leq t$. Alternatively, chapter 12.2 of \cite{howard2012dynamic} derives (\ref{eq:ctmc 2}) using Laplace transforms. The transient state probabilities are similarly obtained through solving $d\vec{\phi}(t)/dt = \vec{\phi}(t)\mathbf{Q}$ (see chapter 9.10.4 of \cite{stewart2009probability}) such that 
\begin{eqnarray}
    \vec{\phi}(s,t) & = & \vec{\phi}(s)e^{\mathbf{Q}(t-s)} \label{eq: phi 1} \\
    \vec{\phi}(\Delta t) & = & \vec{\phi}(0)e^{\mathbf{Q}(\Delta t)} \label{eq: phi 2.}
\end{eqnarray}
The matrix exponential is a computationally intensive item to compute
\begin{equation}
    e^{\mathbf{Q}t} = \sum_{k=1}^{\infty} \frac{\left( \mathbf{Q}t \right)^k }{k!}
\end{equation}
and is subject to numerical instabilities for large $t$. For small generator matrices, the \emph{scaling method} (chapter 10.7.1 of \cite{stewart2009probability}) provides stable solutions at time $t=2^m t_0$ where $t_0 < t$ is selected such that $\mathbf{P}_{t_0} = \exp\{\mathbf{Q}t_0\}$ is stable. This method makes use of the fact that $\exp\{\mathbf{Q}t\} = \exp\{(\mathbf{Q}(t/2))^2\}$ such that setting $t_j = t_0$ allows for efficient recursive computation via the Chapman-Kolmogorov equations through squaring
\begin{eqnarray}
    \mathbf{P}_{t_{j+1}} & = & \mathbf{P}_{t_{j}}\mathbf{P}_{t_{j}}\\
    & = & \left( \mathbf{P}_{t_{j}}\right)^2
\end{eqnarray}
where $t_{j+1} = 2 t_j$ such that $\vec{\phi}(t) = \vec{\phi}(t_0)\mathbf{P}_{t_m}$. For larger matrices, the \emph{uniformisation} method is a better alternative for computing $\mathbf{P}(t)$ at a single $t$. Recalling that $\gamma_i = \sum_{j=\neq i}\lambda_{i,j}$ it should be clear that $\gamma_i = \lambda_{i,i} = - \mathbf{Q}_{i,i}$. A \emph{global sampling rate} can be defined $\gamma \geq \max_{\gamma_i}$ such that transitions occur over intervals of length $\Delta t = 1/\gamma$. A DTMC can be derived
\begin{eqnarray}
    \mathbf{P} & = &  \mathbf{Q}\Delta t + \mathbf{I}
\end{eqnarray}
where 
\begin{eqnarray}
    \mathbf{Q} & = & \gamma \left( \mathbf{P} - \mathbf{I}  \right)
\end{eqnarray}
such that $\vec{\phi}(t) = \vec{\phi}(0)\exp\{-\gamma t\}\exp\{\gamma \mathbf{P}t\}$. The uniformisation method recursively computes $\vec{\phi}(0)\exp\{\gamma \mathbf{P}t\}$ through setting $\vec{\nu}_1 = \Delta \vec{\nu}_{1} = \vec{\phi}(0)$ and updating it as
\begin{eqnarray}
    \Delta \vec{\nu}_{k+1} & = & \Delta \vec{\nu}_{k}\left(  \mathbf{P}\frac{\gamma t}{k}\right)\label{eq:step1 uni}\\
    \vec{\nu}_{k+1} & =&  \vec{\nu}_{k} + \Delta \vec{\nu}_{k+1} \label{eq:step2 uni}
\end{eqnarray}
until $|\vec{\nu}_{k^*}-\vec{\nu}_{k^*-1}| < \vec{\epsilon}$ where $\vec{\epsilon}$ is a vector of small error tolerances. The contents within brackets of equation~(\ref{eq:step1 uni}) needs to be computed only once and stored. Lastly, the desired result is obtained $\vec{\phi}(t) = \exp\{-\gamma t\}\vec{\nu}_{k^*}$. The work of this paper will be interested in routinely evaluating integrals of the form $\int_{0}^{\infty} f(t)\vec{\phi}(0)\exp(\mathbf{Q}t)\,dt = \int_{0}^{\infty} f(t)\vec{\phi}(t)\,dt$ using the same generator matrix. The previous two methods are not the most efficient for computing a sequence of state probabilities
\begin{equation}
    \Psi(\mathcal{T}) = \left\{ \left(\tau_k,\vec{\phi}(\tau_k)\right)\right\}_{\tau_k \in \mathcal{T}}
\end{equation}
over the mesh $\mathcal{T}$ used for numerical integration. It turns out that solving the \emph{ordinary differential equations} (ODEs) $d\vec{\phi}(t)/dt = \vec{\phi}(t)\mathbf{Q}$ is a more effective approach (see section 10.7.3 of \cite{stewart2009probability}). ODE solvers have well-developed software which extends the appeal of this approach. The most basic approach would be to use the \emph{explicit forward Euler method} over a mesh with constant step-size $\Delta \tau$ such that that
\begin{eqnarray}\label{eq: forward euler}
    \vec{\phi}(\tau_{k+1}) & = & \vec{\phi}(\tau_{k}) + \Delta \tau_k \vec{\phi}(\tau_{k}) \mathbf{Q}\\
    \tau_{k+1} & = & \tau_{k} + \Delta \tau_k
\end{eqnarray}
A function $g: \mathbb{R}_{\geq 0} \to \mathbf{\Delta}^{N-1}$ can be fitted using interpolation or regression on $\Phi(\mathcal{T})$. This stored function should be computationally cheap to compute as to optimise $\int_{0}^{\infty}f(t)g(t)\,dt \approx \int_{0}^{\infty} f(t)\vec{\phi}(t)\,dt$.

Solving for the stationary distribution follows from the theory of dynamic systems modelled as ODEs where $\vec{\phi}$ is stationary if it solves $d\vec{\phi}/dt = \vec{\phi}\, \mathbf{Q} = 0$ subject to the constraints $\phi_j \geq 0$ and $\sum_{j} \phi_j = 1, \quad \forall i = 1,2,\cdots,N$. As was the case with the DTMC stationary distribution, one of the equations $\phi_j = \sum_{i}\phi_{i} q_{i,j}$ can be replaced by $\sum_{j} \phi_j = 1$ in solving a linear system of $N$ equations with $N$ unknowns. Alternatively, the DTMC obtained through uniformisation~(\ref{eq:step1 uni}) can be solved for as in section~\ref{section:DTMC}.

\subsubsection{Semi-Markov Process}\label{section:SMP}
A SMP is Markov process where the $k^{th}$ holding duration in moving from state $i\to j$ follows an arbitrary distribution $F_{i,j}:\mathbb{R}_{\geq 0} \to \mathbb{R}\cap[0,1]$ parameterised by $\theta_{i,j} \in \vec{\theta}$. In a SMP the duration of time already spent in a state $\tau_k$ affects the future probabilities of transitions as well as the remaining time in the current state. As the holding time process is not memoryless, $\mathbf{P}_2$ does not hold and transitions are described using (\ref{eq: semi-markov property}). Specifically, the holding time component and state transition are independent such that
\begin{eqnarray}
    \mathscr{P}\left(x_{k+1}=j,t\leq\tau_k+\Delta \tau_k \mid x_k = i,t>\tau_k \right) & = & \mathscr{P}\left(t\leq\tau_k+\Delta \tau_k \mid t>\tau_k \right) \mathscr{P}\left(x_{k+1}=j \mid x_k = i \right)\\
    & = & F_{i,j}(\Delta \tau_k \mid \tau_k)\, q_{i,j} \label{eq:smp holding time} \\
    & = & P_{i,j}^k(\Delta \tau_k \mid \tau_k)\nonumber
\end{eqnarray}
where $k$ can be dropped as the process is assumed to be time-homogeneous. Note that the conditional cumulative distribution can be obtained from the original $F_{i,j}(\Delta \tau \mid \tau) = 1 - [1-F_{i,j}(\tau+\Delta\tau)]/[1-F_{i,j}(\tau)]$. Alternatively, the survival function $\bar{F}_{i,j}(\tau)  = 1 - F_{i,j}(\tau)$ could be used. It should be pointed out that (\ref{eq:smp holding time}) is not the only formulation (chapter 11.6 of \cite{howard2012dynamic}). The waiting time distribution $F_{i}(\cdot)$ could be considered
\begin{equation}
    F_{i}(\Delta \tau \mid \tau) = \sum_{i=1}^N q_{i,j} F_{i,j}(\Delta \tau \mid \tau) 
\end{equation}
such that 
\begin{eqnarray}
    P_{i,j}^k(\Delta\tau_k\mid\tau_k) & = & \varphi_{i,j}(\Delta\tau_k\mid\tau_k)F_{i}(\Delta\tau_k\mid\tau_k) \label{eq:smp waiting time}
\end{eqnarray}
where
\begin{equation}
    \varphi_{i,j}(\Delta\tau_k\mid\tau_k) = \frac{q_{i,j} F_{i,j}(\Delta \tau_k \mid \tau_k) }{F_{i}(\Delta\tau_k\mid\tau_k)}.
\end{equation}
Different variants of the SMP exist. A such it can be seen to generalise the DTMC and CTMC.

\begin{definition}[\textbf{Independent Semi Markov Process} \cite{howard2012dynamic}]
If the holding time does not depend on the destination state $x_k=j$ then $F_{i,j}(\cdot) = F_{i}(\cdot),\, \forall i = 1,2,\cdots,N$ and $q_{i,j}(\cdot) = p_{i,j}$ (page 716 of \cite{howard2012dynamic}) such that
\begin{eqnarray}
    p_{i,j}^k(\Delta \tau_k\mid \tau_k) & = & q_{i,j}F_{i}(\Delta \tau_k\mid \tau_k).
\end{eqnarray}
If $F_i(\cdot\mid \theta_i) = Exp(\cdot\mid\gamma_i)$ then the CTMC is a special case of the independent SMP. If the system is inspected with certainty at deterministic intervals of fixed length then $F_{i}(t|\theta_i) = 1, \, \forall t \in \mathbb{R}_{\geq 0}$. Note that $p_{i,i} \geq 0$ such that the state need not change at the next inspection which results in a self-transition. Hence, $F_{i}(t|\theta_i) = 1$ pertains to certainty in the discrete-time inspection and not necessarily some waiting time of the actual process. This condition results in a DTMC.
\end{definition}
Thus far, the interpretation of a SMP has been as follows: it behaves like a Markov chain at state transition instants where it selects the next state $j\sim p_{i}$ and observes a holding time $t_{i,j} \sim F_{i,j}$. This implies a single process $F_{i,j}$ to be active. In complex systems such as DEDS, this can be quite a limiting assumption as multiple process are active.
\begin{definition}[\textbf{Competing process model} \cite{howard2012dynamic}]\label{def: competing process model}
In a competing process model, multiple processes $\mathcal{F}_i = \{F_{i,j}:j=1,2,\cdots,M  \}$ are activated upon entering state $i$ such that multiple holding times are drawn $\mathcal{T}_i(\omega) = \{t_{i,j}\sim F(\cdot\mid \omega ):F\in\mathcal{F}_i \}$ where $\omega \in \Omega$ can be considered as underlying randomness. These are process lifetimes and begin to count down upon entry. A state transition is triggered once the $j^{th}$ has zero residual lifetime i.e. $j = \mbox{argmin}\left\{\mathcal{T}_i\right\}$. An important caveat which will be relaxed in section~\ref{section:GSMP} is that all remaining process be renewed or deactivated once the winning process terminates as to ensure that $\mathbf{P}_1$ holds (page 717 of \cite{howard2012dynamic}). A distinguishing feature of the competing process model is that its transition model is constructed using only the process distributions functions
\begin{eqnarray}
    \mathscr{P}\left(x_{k+1}=i,t\leq\Delta\tau_k+\tau_k\mid x_k=i,t>\tau_k,\mathcal{F}_i \right) & = & \int_{0}^{\Delta \tau_k} p_{i,j}\left(\xi \mid \tau_k,\mathcal{F}_i\right)\, d \xi \label{eq: competing cdf}\\
    & = & P_{i,j}\left(\Delta \tau_k \mid \tau_k,\mathcal{F}_i\right) 
\end{eqnarray}
with the probability density function is determined as 
\begin{equation}
    p_{i,j}\left(\Delta \tau_k \mid \tau_k,\mathcal{F}_i\right) = f_{i,j}(\Delta \tau_k\mid \tau_k) \times \prod_{F_{i,l}\in \mathcal{F}_i \setminus
    \left\{F_{i,j}\right\}} \left( 1- F_{i,l}(\Delta \tau_k\mid \tau_k) \right). \label{eq: competing pdf}
\end{equation}
\end{definition}
The usual descriptors can be recovered. As an example, if the system has just entered state $i$ and $\tau=0$ then the probability of selecting state $j$ is
\begin{equation}
    q_{i,j} = P_{i,j}(\infty\mid 0,\mathcal{F}_i).
\end{equation}

In a SMP, the Markov property holds at transition instants but not in-between. These instants are called \emph{renewal instants} and form what is called the \emph{embedded chain} of the SMP. The embedded chain can be analysed using DTMC methods of section~\ref{section:DTMC}.

\begin{definition}[\textbf{Embedded chain} \cite{stewart2009probability}]\label{def: embedded chain}
The embedded chain concerns only the state of the SMP $x_k \in \widetilde{\mathcal{X}}\subseteq \mathcal{X}$ at the time-stamps at which it enters that state $t_k=\sum_{m=0}^{k-1} \Delta t_m$. This is typically recorded as a sequence of tuples
\begin{eqnarray}
    \widetilde{X}_k & = & \{ \widetilde{x}_0,\widetilde{x}_1,\cdots,\widetilde{x}_k \}\\
    & = & \{(x_0,t_0=0),(x_1,t_1),\cdots,(x_k,t_k)\} \label{eq: embedded chain}
\end{eqnarray}
which is similar to the history defined on page~\pageref{definition:markov property}. In working with the embedded chain, interest lies only in the fact that the system makes the transition $i\to j$ regardless of $\Delta t_{i,j}$ such that $\mathscr{P}(x_{k+1}=j\mid x_k=i) = \lim_{t\to\infty}\{q_{i,j}F_{i,j}(t) \} = q_{i,j}$.The transition matrix of the embedded chain becomes
\begin{equation}
    \widetilde{\mathbf{P}} = [q_{i,j}]
\end{equation}
from which a stationary distribution distribution $\widetilde{\phi} \in \mathbf{\Delta}^{N-1}$ can be computed through solving $\widetilde{\phi} = \widetilde{\phi} \,\widetilde{\mathbf{P}}$ using methods as described on page~\pageref{eq:stationary}. Note that $\widetilde{\phi}_j = \mathscr{P}(\widetilde{x}_k=j)\neq \mathscr{P}(x_k=j)$ in that it does not represent the probability of being found in state $j$ upon inspecting the system. In contrast, it represent the probability of the system being at state $j$ upon inspecting a renewal and will be referred to as the stationary distribution of the embedded chain. The overall stationary distribution $\phi_j = \mathscr{P}(x_k=j)$ can be determined as
\begin{eqnarray}
    \phi_j & = &  \frac{\widetilde{\phi}_j \, \mathbbm
    E[\Delta t_j]}{\sum_{i=1}^N\widetilde{\phi}_i \, \mathbbm{
    E}[\Delta t_i]} \label{eq: overall stationary semi-markov}
\end{eqnarray}
where the expected waiting duration is derived from the holding duration 
\begin{eqnarray}
    \mathbbm{E}[t_i] & = & \sum_{j=1}^N q_{i,j} \mathbbm{E}[\Delta t_{i,j}]\\
    & = & \sum_{j=1}^N q_{i,j} \left(\int_{0}^{\infty} t \,f_{i,j}(t)\, dt\right). \label{eq: expected holding time semi-markov}
\end{eqnarray}
\end{definition}

\subsubsection{Generalised Semi-Markov Process}\label{section:GSMP}
A \emph{Generalised Semi-Markov Process} (GSMP) is formalism used in modelling DEDS \cite{glynn1989gsmp}. Similar to the competing process model, all holding time distributions are generally distributed and need not be memoryless. In contrast to the competing process model, holding time distributions are typically associated with an event $e_m \in \Gamma(i) \subseteq \mathcal{E}$ that induces a state transition $i\to j$ upon triggering where $\mathcal{E}$ is the set of all events and $\Gamma(i)$ is the set of \emph{active events} in state $i$. Moreover, not all process or event lifetimes are renewed when a transition occurs. Hence, a GSMP typically does not exhibit readily occurring renewals such that the Markov property may hold. An example of a renewal in a GSMP or DEDS can be found at the instant when a customer enters an empty $G/G/1$ queue. All GSMPs theoretically have renewal instants \cite{henderson_and_glynn_2001regenerative_GSMP} but these are almost impossible to locate in practice in all but the most simple of DEDS. Hence, attempting to study an embedded chain has been abandoned and it is assumed that only the semi-Markov property holds. 

Ensuring that the semi-Markov property holds is not a trivial task. The GSMP is not conditionally independent of its history $\mathcal{H}(x_k)$ given its current state $x_k$ and may remember its entire trajectory. A solution to this is to study a GSMP using its \emph{General State Space Markov Chain} (GSSMC) \cite{glynn1989gsmp,younes_GSMP_formalism} that describes a transition model over the \emph{extended state space}.

\begin{definition}[\textbf{Extended state-space}]\label{def:extended state space}
The piecewise constant and discrete output of the system $x_n \in \mathcal{X} \subseteq \mathbb{N}_0^d$ is augmented with information regarding the ages $\tau_e$ of the active events $\vec{\tau}_n = [ \tau_{e_1},\cdots,\tau_{e_k} ]$ where $k = |\mathcal{E}|$. If $e_i \in \Gamma(x_n)$ then $\tau_{e_i} \geq 0$ whereas $\tau_{e_i} < 0$ otherwise\footnote{This ensures inactive events do not age. A convenient number such as $-1$ can be selected.}. The extended state-space is denoted as $\hat{x}_n = \left(x_n,\tau_1^{(n)},\cdots,\tau_m^{(n)}\right) = \left(x_n,\vec{\tau}_n\right) \in \hat{\mathcal{X}} = \mathcal{X} \times \mathbb{R}^m$.
\end{definition}
\begin{remark}\label{remark:natural state-space}
The subscript $n$ is used to denote the $n^{th}$ triggered event. Only at these epochs does the \emph{natural state-space} $x_n \in \mathcal{X}$ change.
\end{remark}

Transitions between extended state space describes change in both the \emph{internal}\footnote{The ageing of events is an internal process} and \emph{external} states. This is sufficient to describe the dynamics of a GSMP \cite{younes_GSMP_formalism}.

A GSMP has history-dependent dynamics due to the ongoing concurrent and asynchronous stochastic processes. The vector of active clock ages $\vec{\tau}_n$ is a \emph{sufficient statistic} as it summarises the entire history relevant to the present. This allows for the Markov property to be restored at event occurrences and the semi-Markov property to be present in between as can be found in the GSSMC.

\begin{definition}[\textbf{GSSMC}]\label{def:GSSMC }
Triggering of events form a chain of coordinates in the extended state-space
\begin{eqnarray}
    (x_1,\vec{\tau}_1) \xrightarrow{e_1^*} (x_2,\vec{\tau}_2) \xrightarrow{e_2^*} \cdots \xrightarrow{e_{n-1}^*}   (x_n,\vec{\tau}_n)\nonumber 
\end{eqnarray}
which can be described by the sequence
\begin{eqnarray}
    \left( \hat{x}_k \right)_{k=1}^{n} &=& \left(\hat{x}_1, \hat{x}_2,\cdots, \hat{x}_n  \right)
\end{eqnarray}
\end{definition}
such that $\left( \hat{x}_k \right)_{k=1}^{n}$ is an \emph{embedded Markov Chain}. A notable issue with regards to the GSSMC approach is that the augmented state-space is a hybrid state-space with majority of it (event ages) being continuous. This makes extending a GSMP to a \emph{Generalised Semi-Markov Decision Process} (GSMDP) a difficult task \cite{younes_GSMP_formalism,younes_thesis}. The Bellman equations for a GSMDP have also been presented in \cite{younes_GSMP_formalism,younes_thesis} to have intractable high-dimensional integrals that operate over the event ages $\vec{\tau}$. For this reason, GSMDPs have been mostly solved over augmented state-space using the method of phases \cite{younes_GSMPD_solving1,younes_GSMPD_solving2,younes_thesis} which carries over numerous issues of its own as discussed in \cite{chen_GSMDP_paper} and chapter 2.4.2 of \cite{chen_thesis}.

\begin{remark}
It can be said that the \emph{concurrent} and \emph{asynchronous} properties found in the dynamics of DEDS is attributed to having at least one non-memoryless process remain active once an event is triggered such that its age $\tau_{e_i}$ influences the holding time of the next event. In the competing process model no non-memoryless events remain active once an event has been triggered.
\end{remark}

\subsection{Markov Decision Process}\label{section:MDP}

A \emph{decision process\index{decision process}} can be thought of as any stochastic process that can be controlled at various observation epochs in order to optimise some performance metric driven by the evolution of the system. 

\begin{definition}[\textbf{Markov Decision Process}]
A Markov Decision Process is a framework that allows for sequential decision making problems to be solved in a tractable manner. This framework requires a model to be described in terms of 4-tuple

\begin{equation}\label{eq:mdp tuple}
    \mathcal{M} = \left( \mathcal{X},\mathscr{A},\mathcal{P},\mathscr{C}  \right)
\end{equation}
where
\begin{itemize}
    \item[$\mathcal{X}$\quad] is the state space. In \emph{finite} $n$-dimensional MDPs this consists of a bounded set of integers $\mathcal{X} = \bigtimes_{i=1}^n \mathbb{Z}\cap [\underline{m}_i,\overline{m}_i]$ where $\bigtimes$ denotes the Cartesian product and $\infty<\underline{m}_i<\overline{m}_i<\infty$. If $\mathcal{X}$ has a \emph{continuous} component then an \emph{infinite} dimensional MDP arises.
    \item[$\mathscr{A}$\quad] is the action space. At decision epochs, this space is queried with regards to finite set of available actions $\mathscr{A}\left(x_n\right)$ where $\mathscr{A}: \mathcal{X} \to \mathbb{Z}\cap [\underline{m},\overline{m}]$ and $\infty<\underline{m}<\overline{m}<\infty$.
    \item[$\mathcal{P}$\quad] is the Markov model. It is required to satisfy either the Markov or Semi-Markov property at decision epochs. The state of the system at a decision epoch is often referred to as a \emph{distinguished state} \cite{fox1968semi}.
    \item[$\mathscr{C}$\quad] is the cost or reward structure. 
\end{itemize}
\end{definition}

\subsubsection{Components}\label{section: mdp components}
The MPD tuple (\ref{eq:mdp tuple}) is elaborated upon by discussing how each component plays a role in the execution of the MPD.

\begin{definition}[\textbf{Decision Epoch}]\label{definition:decision epoch}
At time $t$ the system has entered a state $x_n \in \mathcal{X}$ where either the Markov or Semi-Markov property holds. The action set at this state is not empty $\mathscr{A}(x_n) \neq \varnothing$. The decision epoch is not a duration but an instance in time.
\end{definition}

\begin{definition}[\textbf{Distinguished State} \cite{fox1968semi}]
This is a state $\widetilde{x} \in \widetilde{\mathcal{X}} \subseteq \mathcal{X}$ found at a decision epoch. The set of distinguished states is a subset of the embedded chain.
\end{definition}

\begin{definition}[\textbf{Policy} \cite{BertsekasVol2}]
A policy specifies what is action to be taken upon entering a distinguished state. Generally, three binary characteristics describe the nature of the policy.
\begin{enumerate}
    \item \emph{Deterministic vs Randomised: } A deterministic policy is a map $\pi_{D}: \widetilde{\mathcal{X}} \to \mathscr{A}$ such that $a_n = \pi\left(\widetilde{x}_n\right)$ where the decision $a_n$ takes on a given value with probability of one. A randomised policy is a map $\pi_{R}: \widetilde{\mathcal{X}} \to \mathbf{\Delta}^{|\widetilde{\mathcal{X}}|-1}$. Hence $a_n \sim \pi_R\left(\widetilde{x}_n\right)$.
    \item \emph{History-dependent vs independent: } A history dependent policy is a map that requires a history $\mathcal{H}(\widetilde{x}_n)$ of the process up to its current state. A history-dependent decision is provided by a map $\mu_{H}: \mathcal{H} \to \mathscr{A}$ such that $a_n = \mu_{H}(\mathcal{H})$. 
    \item \emph{Stationary vs non stationary policies} This could be rephrased as time-homogeneous vs time-inhomogeneous policies. A non-stationary policy is a time-dependent map $\pi_{t}: \mathbb{R}_{\geq 0} \times \widetilde{\mathcal{X}} \to \mathscr{A}$ such that $a_n(t) = \pi_t\left(\widetilde{x}_n\right)$. For some $t_2 \neq t_1$, it is generally the case that $\pi_{t_1}\left(\widetilde{x}_n=i\right) \neq \pi_{t_2}\left(\widetilde{x}_n=i\right)$ which is attributed to \emph{transient} behaviour. A stationary policy is a map that ignores time $\pi: \widetilde{\mathcal{X}} \to \mathscr{A}$ such that $\forall t \geq 0:\,\, a_n(t) =  a_n = \pi\left(\widetilde{y}_n\right)$.
\end{enumerate}
\end{definition}
If $x_n \in \mathcal{X}\setminus\widetilde{\mathcal{X}}$ then $\pi(x_n) = \varnothing$. Hence, no decisions are made at non-distinguished states. The time horizon also has to be specified as being \emph{finite}, \emph{infinite} or \emph{episodic}. In the finite case it is desirable to control a transient portion of the process. Hence a non-stationary policy would make sense while a stationary policy would make sense for the infinite horizon case. The episodic case implies a goal or absorbing state. The problem runs for an unspecified amount of finite time until it is reached.

The policy interacts with the MDP through specifying a transition and cost model.

\begin{definition}[\textbf{Transition model} \cite{BertsekasVol2}]\label{def: mdp transition model}
A set of Markov chains $\mathcal{P} = \{ \mathbf{P}_{a}:a\in\mathscr{A} \}$ exist as to describe the dynamics of the system between distinguished states under a given action $\widetilde{x}_{n+1} \sim \mathscr{P}(\cdot \mid \widetilde{x}_{n},a_n)$. Ultimately, a single Markov chain $\mathbf{P}_{\pi}=[P_{i,j}^{a}]$ is constructed row-wise by selecting the row from $\mathbf{P}_{\pi(\widetilde{x})}$ that corresponds to $\widetilde{x}$.
\end{definition}

\begin{definition}[\textbf{Cost Structure} \cite{BertsekasVol2}]
The cost structure consists of two parts:
\begin{enumerate}
    \item \emph{Lump-sum cost: } This is the cost incurred upon entering a distinguished state. It is specified by $C_{L}:\widetilde{\mathcal{X}} \to \mathbb{R}_{\geq 0}$ such that 
    \begin{equation}
        C_i^{L,n} = C_{L}\left(\widetilde{x}_n = i\right).\nonumber
    \end{equation}
    where the epoch number $n$ can be dropped when working with a infinite-horizon/stationary problem
    \item \emph{Transition cost: } This is the cost incurred over the duration of a transition between two distinguished states. This cost is incurred over time at some rate $c_i>0$. It is always the case that the transition cost can be converted into a \emph{holding cost} which is a lump-sum cost $C_i = \int_{0}^t c_i\,d\tau$. However, this should be corrected as to include some \emph{discount} or \emph{interest rate} $\beta >0$ such that
    \begin{eqnarray}
        C_i^{\beta} & = & \int_{0}^t c_i \left( \int_{0}^{\tau} e^{-\beta \xi}\, d\xi  \right)\,d\tau \nonumber \\
         & = & \int_{0}^{t} c_i \left(\frac{1-e^{-\beta \tau}}{\beta}\right)\,d\tau
    \end{eqnarray}
\end{enumerate}
The cost structure can be simplified into a single time-homogeneous lump-sum cost.
\begin{equation}\label{eq:general lump sum cost}
    C_i = C_i^{L} +C_i^\beta
\end{equation}
\end{definition}

This fact will play an important role when discussing concept of \emph{cost-per-stage} in the \emph{Bellman Dynamic Programming equations} used minimise the objective function.

\begin{definition}[\textbf{Objective function\index{Objective function}}\cite{cassandras_book}]
In solving for MDPs, the goal is to find a policy that \emph{globally} optimises the objective function. When working with costs, this translates to searching for the global minimum. In the infinite-horizon, an additive discounted cost function solved for using Dynamic Programming finds a globally optimal and \emph{unique} policy (see section 1.2 of \cite{BertsekasVol2}). Such a discounted infinite-horizon objective function starting from an initial distinguished state $\widetilde{x}(t_0=0)=i$ and evolving under policy $\pi$ along with cost rate $C(x(t),\pi(x(t)))$ is computed as below.
\begin{eqnarray}
    J_{\pi}^\beta(i) & = & \mathbbm{E}_{\pi} \left[ \int_{0}^{\infty} e^{-\beta t} C(x(t),\pi(x(t))) \, dt \,\big| \, x(0) =i \right] \\
    & = & \sum_{k=0}^{\infty} \mathbbm{E}_{\pi} \left[  \int_{t_k}^{t_{k+1}}   e^{-\beta t} C(x(t),\pi(x(t))) \, dt \, \big| \, x(t_0) =i  \right] \label{eq:discounted objective function}
\end{eqnarray}
\end{definition}

\subsubsection{Discounted Bellman equations}\label{section: discounted bellman equations}

The Bellman equations allow for a Dynamic Programming solution as to obtain the optimal history-independent deterministic policy $\pi^*$ of a MDP. This policy is at least as good as the best history-dependent randomized policy if the system is Markovian \cite{BertsekasVol2}. Hence without loss of generality, research is restricted to such a class of policies if the Markov or Semi-Markov property holds ( see section 1.1.4 of \cite{BertsekasVol2}).

Time-continuous Operations Research and Financial Engineering applications use an \emph{interest rate} $\beta \geq 0$ to compare future and present values over some operating period $t$ such that $V_{\mbox{present}} = 1/(1+\beta)^t \, V_{\mbox{future}}$. However, the MDP framework uses the notion of a \emph{discount factor} $0\leq \alpha \leq 1$ which is related to the interest rate $\alpha = 1/(1+\beta)$. The discount factor\index{discount factor} usually applies to fixed discrete periods and as such is raised to the power of integer values. 

When $\beta$ is small, it is well known \cite{gosavi2015simulation} that an approximation based on the exponential series leads to an expression for the discount factor over some operating period in terms of the interest rate.
\begin{eqnarray}
    \alpha^t & = & \left( \frac{1}{1+\beta}  \right)^t \nonumber\\
    & \approx & (e^{-\beta})^t \nonumber \\
    & \approx & e^{-\beta t} \label{eq: discount rate}
\end{eqnarray}
This is an important concept as it allows continuous-time problems to be solved over discrete-time intervals while retaining the desired interest rate. Hence $V_{\mbox{present}} = e^{-\beta t}\, V_{\mbox{future}}$. The Bellman Policy \emph{Evaluation}\index{Bellman Policy Evaluation} and Bellman Policy \emph{Improvement}\index{Bellman Policy Improvement}\footnote{The policy evaluation equations lead to the \emph{Policy Iteration} algorithm and the policy improvement equations lead to the \emph{Value Iteration} algorithm as discussed in \cite{gosavi2015simulation}} equations are now presented.

\begin{definition}[\textbf{Discounted policy evaluation equation}]
The intention of this equation is to evaluate the objective function under a \emph{given} stationary policy $\pi$. The \emph{discrete-time} equation is given as
\begin{eqnarray}
    J_{\pi}^{\alpha}(i) & = & C_i^{\pi(i)} + \alpha  \sum_{j \in \mathcal{X}}P_{i,j}^{\pi(i)}  J_{\pi}^{\alpha}(j) \label{eq: discounted bellman policy eval discrete}\\
    & = & \mathbf{T}_{\pi}J_{\pi}^{\alpha}(i) \label{eq: discounted bellman policy eval operator}
\end{eqnarray}
\end{definition}

The Bellman policy evaluation operator $\mathbf{T}_{\mu}$ is a \emph{linear self-mapping operator}. It solves $|\mathcal{X}|$ number of unknown $J_{\pi}(i)$ with $|\mathcal{X}|$ equations such that
\begin{eqnarray}
    \vec{J}_{\pi}^{\alpha} & = & \mathbf{T}_{\pi}\vec{J}_{\pi}^{\alpha}. \label{eq: Bellman linear equations}
\end{eqnarray}
Equation~(\ref{eq: discounted bellman policy eval discrete}) can be formulated as system of $|\mathcal{X}|$ linear equations
\begin{eqnarray}
    \vec{J}_{\pi}^{\alpha} & = & \left( \mathbf{I} - \alpha  \mathbf{P}_{\pi}  \right)^{-1}\vec{C}_{\pi}
\end{eqnarray}
where $\vec{J}_{\pi}^{\alpha}=[J_{\pi}^{\alpha}(i)]$ and $\vec{C}_{\pi} = [C_i^{\pi(i)}]$ are vectors of the state-value functions and lump-sum costs, respectively. It is important to notice that $ I - \alpha  \mathbf{P}_{\pi} $ is always invertible as the eigenvalues of any row stochastic matrix such as $\mathbf{P}_{\pi}$ lies within a unit circle \cite{BertsekasVol2}. This asserts the claim that $\vec{J}_{\pi}^{\alpha}$ is always a unique solution to the \emph{fixed-point} problem (\ref{eq: Bellman linear equations}). 

\begin{definition}[\textbf{Discounted policy improvement equation}]
The intention of this equation is to find a policy $\pi'$ using $\vec{J}_{\pi}$ that has corresponds to $\pi$ such that $\vec{J}_{\pi'} \preceq \vec{J}_{\pi} $ where $\preceq $ denotes an element-wise inequality. The discrete-time equation is formulated as
\begin{eqnarray}
    J_{\pi'}^{\alpha}(i) & = & \min_{\pi'(i) \in \mathscr{A}(i) }\left\{ C_i^{\pi'(i)} + \alpha  \sum_{j \in \mathcal{X}}P_{i,j}^{\pi'(i)}  J_{\pi}^{\alpha}(j)  \right\}\label{eq: discounted bellman policy improve discrete}\\
    & = &  \mathbf{T} J_{\pi}^{\alpha}(i) \label{eq: discounted bellman policy improve operator}
\end{eqnarray}
\end{definition}
where $\mathbf{T}$ is the \emph{non-linear self-mapping} Bellman policy improvement operator.
\begin{definition}[\textbf{Bellman policy improvement operator}]
Technically the Bellman policy improvement\index{Bellman policy improvement} operator, $\mathbf{T}$ solves an optimisation problem as opposed to a system of well-defined linear equations. Some texts \cite{gosavi2015simulation} refer to (\ref{eq: discounted bellman policy improve operator}) as the Bellman optimality equation because $J_{\mu'} \preceq J_{\mu}$. This is optimal in providing a local improvement. Successive application of (\ref{eq: discounted bellman policy improve operator}) is denoted as
\begin{eqnarray}
    J_{\mu_{j+k}} & = &  \mathbf{T}^k J_{\mu_{j}}
\end{eqnarray}
whereby $ \vec{J}_{\pi'} \preceq \vec{J}_{\pi} $ such that under the max-norm $||\cdot||_{\infty}$ and $\mathbf{T}$ being uniformly contracting with modulus $\kappa$ then the \emph{Banach Fixed-Point Theorem} \cite{stachurski2009economic} provides that (where $\ominus$ denotes element-wise subtraction)
\begin{eqnarray}
    || \mathbf{T}^n \vec{J}_{\pi} \ominus \vec{J}_{\pi^*} ||_\infty&  \leq & \kappa^n ||\vec{J}_{\pi} \ominus \vec{J}_{\pi^*}||_{\infty}  \label{eq:bellman limit converge}
\end{eqnarray}
which translates to
\begin{eqnarray}
    \lim_{n\to \infty} \mathbf{T}^n \vec{J}_{\pi} = \vec{J}_{\pi^*} \label{eq:converge}
\end{eqnarray}
where $\vec{J}_{\pi^*}$ is a fixed point and a global optimum in the sense that $\vec{J}_{\pi^*} \preceq \vec{J}_{\pi}, \forall \pi \in \mathscr{A}^{|\mathcal{X}|}$ and $\pi^*$ is the optimal policy. Equation (\ref{eq:bellman limit converge}) is essentially the Value Iteration algorithm \cite{stachurski2009economic}.
\end{definition}

\section{Semi-Markov Decision Process}\label{section: SMDP}

This section contains the main contribution of this paper: to model a controlled non-preemptive two-queue polling model with generally distributed service and switch-over durations and Poisson arrivals.

\subsection{Assumptions}\label{section: SMDP assumptions}
With reference to section~\ref{section: notation}, if all processes in $\mathcal{E}$ were generally distributed (non-memoryless) then the polling model or DEDS would be a GSMP. A GSMP is difficult to solve as a GSMDP while a SMP has been successfully controlled as a \emph{Semi-Markov Decision Process} (SMDP) \cite{howard2012dynamic,gosavi2015simulation} and even a \emph{Partially Observable Semi-Markov Decision Process} (POSMDP) \cite{posmdp}. 

The most direct route in obtaining a SMDP from a GSMDP would be to convert the underlying GSMP into a competing process model (definition~\ref{def: competing process model}). In order to do so, a modelling assumption must be imposed such that all active processes restart once an event has been triggered. As an extreme case, all processes in $\mathcal{E}$ can be assumed to be memoryless. This would lead the system being a CTMC and not a SMP. It is argued that such an approach would be inferior as most non-trivial temporal characteristics would be lost. For example, unreliable service can be modelled as a bi-modal mixture model in a SMP. In a CTMC, an exponential distribution with the same mean would be used as a substitution. The latter case would not capture the risky or unreliable nature of selecting service as a decisions.

For a GSMP to be a competing process model, two scenarios have been identified: 
\begin{enumerate}
    \item Explicitly cancel and restart all generally distributed processes in $\mathcal{E}$ once an event has been triggered.
    \item Have at most once generally distributed process active at a time while the rest are memoryless.
\end{enumerate}

In this paper no valid reason or explanation could be materialised as to justifying an explicit cancel and restart procedure. That is not to say it is not a valid approach. This paper instead proceeds to be more comfortable in adopting the latter as it can be justified in the context of the polling model application. Note that the first scenario has been used in \cite{frank_discrete_GSMDP} which is similar to this paper as it introduces the \emph{resetting discrete GSMDP} as feasible means of modelling a GSMDP/DEDS. It is different in that it focuses on discrete time as opposed to continuous-time used in this paper.

To this end, $\mathcal{E}$ can be partitioned into processes that can only occur in a serial manner to one another $\mathcal{E}_{\perp}$ and those processes that will be observed to run concurrently with at least another process in $\mathcal{E}$. The latter is denoted by $\mathcal{E}_{\parallel}$ such that $\mathcal{E}_{\perp}\cup\mathcal{E}_{\parallel} = \mathcal{E}$ and $\mathcal{E}_{\perp}\cap\mathcal{E}_{\parallel} = \varnothing$. A SMP is obtained through allowing all serial processes $\mathcal{E}_{\perp}$ to remain generally distributed while the concurrent processes $\mathcal{E}_{\parallel}$ are constrained to be memoryless. A third set which contains $\bar{\mathcal{E}}_{\parallel}$ all processes in $\mathcal{E}_{\parallel}$ as non-memoryless can be kept as a backup. The reason for this will become clear below.

In allowing the SMP to execute, at most once process in $\mathcal{E}_{\perp}$ will be active in conjunction with an arbitrary number of active concurrent processes $\mathcal{E}_{\parallel}$. Once this process triggers an event, a renewal will occur as all remaining active events $\Gamma(x_n)$ will be memoryless. A new processes from $F_e \in \mathcal{E}_{\perp}$ can then be appended to the active set $\Gamma(x_{n+1}) = \Gamma(x_{n+1})'\cup\{ F_e \}$ where $\Gamma(x_{n+1})'$ denotes an intermediate set resulting from $\Gamma(x_n)$ by adding/activating or removing/deactivating concurrent processes to suit the needs of $x_{n+1}$. If a new non-memoryless process is not added to $\Gamma(x_{n+1})$ then the process is briefly a CTMC. The decision-maker may be content with this or take the opportunity to swap on of the memoryless processes $G_e \in \Gamma({x_{n+1}})$ for its equivalent general process $F_e \in \bar{\mathcal{E}}_{\parallel}$. Caution would have to be taken as to ensure that this process triggers an event before reintroducing $F_e \in \mathcal{E}_{\perp}$ otherwise scenario 1 occurs of a GSMP is created. This paper chooses not to intervene when the competing process model briefly becomes a CTMC.

It would be ideal if $\mathcal{E}_{\perp} \subseteq \mathscr{A}$ such that decisions pertain to temporally extended actions --- most of which are non-memoryless. 

Lastly, for decisions to be made at distinguished states of a discrete nature as opposed to hybrid states in the embedded chain of a GSSMC (see definition~\ref{def:GSSMC }), decisions can not preempt $F_e \in \Gamma(x_n)\cap \mathcal{E}_{\perp}$. Otherwise, preempting would have to take $\tau_e \in \mathbb{R}_{\geq 0}$ into account. Hence, non-preemptive decisions are taken at decision epochs in the embedded chain of the SMP.

\subsection{Model components}

\subsubsection{Event sets}

The full event-set $\mathcal{E} = \{\lambda_1,\lambda_2,\mu_1,\mu_2,s_{1,2},s_{2,1},\Lambda_1,\Lambda_2  \}$ will be split as $\mathcal{E}_{\perp} = \{ \mu_1,\mu_2,s_{1,2},s_{2,1} \}$ and $\mathcal{E}_{\parallel} = \{\lambda_1,\lambda_2,\Lambda_1,\Lambda_2\}$. The set of feasible actions is consulted at $l_2=0$
\begin{equation}
    \mathscr{A}(n_1,n_2,l_1) = 
    \begin{cases}
    \{ \mu_1,s_{1,2},\Lambda_1 \}, & n_1 > 0 , l_1 = 1 \\
    \{ s_{1,2},\Lambda_1 \}, & n_1 = 0 , l_1 = 1 \\
    \{ \mu_2,s_{2,1},\Lambda_2 \}, & n_2 > 0 , l_1 = 2 \\
    \{ s_{2,1},\Lambda_2 \}, & n_2 = 0 , l_1 = 2
    \end{cases}
\end{equation}
where $l_2=0$ is naturally observed at the embedded chain of the SMP. 

\subsubsection{Policy}
The policy $\pi: \widetilde{\mathcal{X}} \to \mathscr{A}$ is a deterministic, history-independent and stationary policy which guarantees to be optimal in the infinite horizon setting (section~\ref{section: discounted bellman equations}) where $\widetilde{\mathcal{X}}$ is recalled to be the state-space of the embedded chain (definition~\ref{def: embedded chain}). The desirable property $\mathcal{E}_{\perp} \subset \mathscr{A}$ simplifies the decision making process to specifying which generally distributed serial process to commit to at points in the embedded chain. Idling $\Lambda_i$ translates to not introducing a serial process and waiting for the system to transition to another point in the embedded chain i.e. an arrival of any class occurs.

\subsubsection{Transition model}\label{section: smdp transition model}

Once a non-preemptive temporally extended action has been committed to, the possibility of class 1 or class 2 arrival events arises during this interval. The transition model has to take these arrivals into account. As an example, for a completed class 1 service to transition from $x_{k} = (n_1,n_2,l_1=1)$ to $x_{k+1} = (n_1-1,n_2,l_1=1)$ a service event must have triggered without any class 1 or class 2 arrivals occurring \emph{before} it. With reference to the example, the probability $P_{x_k,x_{k+1}}^{l_2=\mu_1} = \mathscr{P}(n_{\lambda_1}=0,n_{\lambda_2}=0\mid e=\mu_1)$ needs to be modelled where $n_{\lambda_i} \in \mathcal{N}_i =  \mathbb{N}\cap[0,N_i]$ denotes the number of class $i$ arrivals that have occurred, $N_i \in \mathbb{N}$ is a truncation/upper bound and $e$ is the triggered event that frees up the server and performs a transition within the embedded chain $x_k \to x_{k+1}$. The triggering event occurs $t_e$ units of time after the system entered $x_k$.

The probabilities $\mathscr{P}(n_{\lambda_1},n_{\lambda_2},t_e=t)$ can be computed using the transient state probabilities (\ref{eq: phi 2.}) as $(n_{\lambda_1},n_{\lambda_2})$ is a \emph{bi-variate birth-process} which is a CTMC. An appropriate generator matrix needs is constructed to model the birth process. As $\mathbf{Q}$ is a two-dimensional object that maps coordinates, $(n_{\lambda_1},n_{\lambda_2})$ needs to be flattened. Let $n_\lambda \in \mathcal{N} = \mathbb{N}\cap [0, N_1 \times N_2]$ be the flattened version of $(n_{\lambda_1},n_{\lambda_2})$ such that it is constructed using a function $\mbox{\texttt{coord2idx}}: \mathcal{N}_1 \times \mathcal{N}_2 \to \mathcal{N} $ such that $i = \mbox{\texttt{coord2idx}}(n_{\lambda_1},n_{\lambda_2}) = n_{\lambda_1}\times N_2 + n_{\lambda_2}$. To reconstruct the original coordinates a function that relies on floor and modulo division $\mbox{\texttt{idx2coord}}:\mathcal{N} \to \mathcal{N}_1 \times \mathcal{N}_2$ is defined as it allows for $(n_{\lambda_1},n_{\lambda_2}) = \mbox{\texttt{idx2coord}}(i) = \left(\,\lfloor i/N_2 \rfloor\,, i \mod N_2\right) $. The generator matrix is constructed over the flat space as follows
\begin{equation}\label{eq: bi-variate generator matrix}
    \mathbf{Q}_{i,j} = 
    \begin{cases}
    \lambda_1, & i = \mbox{\texttt{coord2idx}}(n_{\lambda_1},n_{\lambda_2}), j = \mbox{\texttt{coord2idx}}(n_{\lambda_1}+1,n_{\lambda_2})\\
    \lambda_2, & i = \mbox{\texttt{coord2idx}}(n_{\lambda_1},n_{\lambda_2}), j = \mbox{\texttt{coord2idx}}(n_{\lambda_1},n_{\lambda_2}+1)\\
    -\gamma = - (\lambda_1 + \lambda_2), & i = \mbox{\texttt{coord2idx}}(n_{\lambda_1},n_{\lambda_2}), j = \mbox{\texttt{coord2idx}}(n_{\lambda_1},n_{\lambda_2}) \\
    0, & \mbox{else}
    \end{cases}
\end{equation}
which allows for the transient state probabilities to be computed
\begin{eqnarray}
    \mathscr{P}(n_{\lambda_1},n_{\lambda_2},t) &=& \left( \vec{e}_0 e^{\mathbf{Q}t} \right)_{\mbox{\texttt{coord2idx}}(n_{\lambda_1},n_{\lambda_2})}\\
    & = & \left( \vec{\phi}(t) \right)_{\mbox{\texttt{coord2idx}}(n_{\lambda_1},n_{\lambda_2})}\\
    & = & \left(\bm{\phi}(t) \right)_{n_{\lambda_1},n_{\lambda_2}}
\end{eqnarray}
where $\vec{\phi}(0) = \vec{e}_0$ is a unit vector with a one at index zero. This translates to the fact that it is known with certainty that no arrivals can occur the instant the system enters state $x_k$. Furthermore, $\mathbf{\phi}$ is a $N_1 \times N_2$ matrix as opposed to a length $N_1\times N_2$ vector. Both data structures are probability mass functions i.e. $\sum_{n_{\lambda_1}}\sum_{n_{\lambda_2}}\bm{\phi}_{n_{\lambda_1},n_{\lambda_2}}=1$ and $\sum_{i} \vec{\phi}_i = 1$.

The probabilities of interest can no be obtained $\mathscr{P}(n_{\lambda_1},n_{\lambda_2}\mid e)$ for an event $e$ with generally distributed lifetime $t_e \sim F_{e}$. Using the probability mass function of this lifetime $f_e$, the expected time-invariant probabilities are found
\begin{eqnarray}
    \mathscr{P}(n_{\lambda_1},n_{\lambda_2}\mid e) & = & \left( \int_{0}^{\infty} \vec{e}_0 e^{\mathbf{Q}t}f_{e}(t)\, dt \right)_{\mbox{\texttt{coord2idx}}(n_{\lambda_1},n_{\lambda_2})}\label{eq: arrival probs}\\
    & = & \left( \int_{0}^{\infty} \vec{\phi}(t)f_{e}(t)\, dt \right)_{\mbox{\texttt{coord2idx}}(n_{\lambda_1},n_{\lambda_2})}\\
    & = & \left(\,\vec{\phi}_e\,\right)_{\mbox{\texttt{coord2idx}}(n_{\lambda_1},n_{\lambda_2})} \\
    & = & \left(\,\bm{\phi}_e\, \right)_{n_{\lambda_1},n_{\lambda_2}}.
\end{eqnarray}
Equation~(\ref{eq: arrival probs}) is all that is needed to fully describe the transition probabilities of the entire model --- a Markov chain for each action. Under the service action $l_2 = 1$, $e=\mu_i$ and $f_e = f_{\mu_i}$. Furthermore, let $x_k = (n_1,n_2,l_1=i)$ and $x_{k+1} =\left(n_1+n_{\lambda_1}-\mathbbm{1}_{\{\mu_i=\mu_1\}},n_2+n_{\lambda_2}-\mathbbm{1}_{\{\mu_i=\mu_2\}},l_1=i\right)$ from which the transition model follows.
\begin{eqnarray}
    P_{x_k,x_{k+1}}^{l_2=1} & = & \mathscr{P}(n_{\lambda_1},n_{\lambda_2}\mid e=\mu_i) \\
    & = &  \left(\,\vec{\phi}_{\mu_i}\,\right)_{\mbox{\texttt{coord2idx}}(n_{\lambda_1},n_{\lambda_2})}.
\end{eqnarray}
The switch-over model follows from setting $l_2=2$, $e = s_{i,j}$, $f_e = f_{s_{i,j}}$, $x_{k} = (n_1,n_2,l_1=i)$ and $x_{k+1}=\left(n_1+n_{\lambda_1},n_2+n_{\lambda_2},l_1=j\right)$ where $i \neq j$ such that only the probabilities of arrivals occurring are required
\begin{eqnarray}
    P_{x_k,x_{k+1}}^{l_2=2} & = & \mathscr{P}(n_{\lambda_1},n_{\lambda_2}\mid e=s_{i,j}) \\
    & = &  \left(\,\vec{\phi}_{s_{i,j}}\,\right)_{\mbox{\texttt{coord2idx}}(n_{\lambda_1},n_{\lambda_2})}.
\end{eqnarray}
Idling may seem more complicated as it is defined as waiting for either a single class 1 or class 2 arrival to occur. More than one arrival does not occur! The probability distribution function for the duration of such an arbitrary arrival makes use of one of the many interesting properties of the exponential distribution $t_{\Lambda_i} = \min\{t_{\lambda_1},t_{\lambda_2} \}$ then  $F_{\Lambda_i} =  Exp(\lambda_1+\lambda_2) = Exp(\gamma)$ where $\gamma$ is the global sampling rate or uniformisation factor. Subsequently, if $l_2=0$, $e=\Lambda_i$, $f_e = f_{\Lambda_i}$, $x_k = (n_1,n_2,l_1=i)$, $\chi_{k+1}=\left\{ \left(n_1+1,n_2,l_1=i\right),\left(n_1,n_2+1,l_1=i\right)  \right\}$ then for $x_{k+1} \in \chi_{k+1}$
\begin{eqnarray}
    P_{x_k,x_{k+1}}^{l_2=0} & = & \mathscr{P}(n_{\lambda_1},n_{\lambda_2}\mid e=\Lambda_i) \\
    & = &  \left(\,\vec{\phi}_{\Lambda_i}\,\right)_{\mbox{\texttt{coord2idx}}(n_{\lambda_1},n_{\lambda_2})}.
\end{eqnarray}
While this fits the framework followed by the serve and switch-over actions, a simpler approach using uniformisation can be used
\begin{equation}
    P_{x_k,x_{k+1}}^{l_2=0} = 
    \begin{cases}
    \frac{\lambda_1}{\gamma}, & n_{\lambda_1}=1,n_{\lambda_2}=0\\
    \frac{\lambda_2}{\gamma}, & n_{\lambda_1}=0,n_{\lambda_2}=1\\
    \end{cases}.
\end{equation}
For each action, a  $|\mathcal{X}|\times |\mathcal{X}|$ Markov chain $\mathbf{P}_{l_2} = \left[P_{x_k,x_{k+1}}^{l_2}\right]$ can be constructed. While these Markov chains are stochastic matrices with interpretable probabilities, the discounted Bellman equations of section~\ref{section: smdp bellman equations} will require discounted matrices $\mathbf{P}_{l_2}^{\beta}$ that are not stochastic matrices. The reason behind this will become clear. However, it should be noted that both $\mathbf{P}_{l_2}$ and $\mathbf{P}_{l_2}^{\beta}$ should be kept --- the former for computing the stationary state distribution of the optimal policy and the latter for use in the Bellman equations. The discounted transition probabilities proceed in an identical manner to what has been presented in this section, except that (\ref{eq: arrival probs}) is modified by integrating with the discount factor
\begin{eqnarray}
    \mathscr{P}_{\beta}(n_{\lambda_1},n_{\lambda_2}\mid e) & = & \left( \int_{0}^{\infty} \vec{e}_0 e^{\mathbf{Q}t}f_{e}(t)e^{-\beta t}\, dt \right)_{\mbox{\texttt{coord2idx}}(n_{\lambda_1},n_{\lambda_2})}\label{eq: discounted arrival probs}\\
    & = & \left(\,\vec{\phi}_{e}^{\beta}\,\right)_{\mbox{\texttt{coord2idx}}(n_{\lambda_1},n_{\lambda_2})} \\
    & = & \left(\,\bm{\phi}_e^\beta\, \right)_{n_{\lambda_1},n_{\lambda_2}}.
\end{eqnarray}
where $\mathscr{P}_{\beta}$ is not a probability but conveys the idea that it has been derived from one. For the decision to idle, uniformisation can once more be implemented as a computationally efficient approach
\begin{equation}
    \mathscr{P}_{\beta}(n_{\lambda_1},n_{\lambda_2}\mid \Lambda_i) = 
    \begin{cases}
    \alpha\frac{\lambda_1}{\gamma}, & n_{\lambda_1}=1,n_{\lambda_2}=0\\
    \alpha\frac{\lambda_2}{\gamma}, & n_{\lambda_1}=0,n_{\lambda_2}=1\\
    \end{cases}
\end{equation}
where $\alpha$ is a discount factor
\begin{equation}
    \alpha = \frac{\beta}{\beta+\gamma}.
\end{equation}

\subsubsection{Cost model}\label{section: smdp cost model}
In this section, a discounted lump-sum holding cost is derived. Apart from a switching penalty $K_{i,j}$ incurred upon committing to the decision to switch, all costs are transition costs. The transition cost can be broken up into two parts:
\begin{enumerate}
    \item The expected discounted holding cost of the existing customers (those seen by the system as it enters $x_k$) over the duration of the decision event $t_e$
    \begin{eqnarray}
        C_{\mathcal{H}}(n_1,n_2\mid e) & = & \int_{0}^{\infty} \left(\int_{0}^t e^{-\beta \tau}\,  d\tau\right) \times \left( c_1 n_1 + c_2 n_2 \right) \times f_e(t) \, dt \\
        & = & \left( c_1 n_1 + c_2 n_2 \right) \times \int_{0}^{\infty} \left(\frac{1-e^{-\beta t}}{\beta}\right) \times f_e(t)\, dt \\
        & = &  \left( c_1 n_1 + c_2 n_2 \right) \times C_{\mathcal{H}}(e) \label{eq: existing customer cost}
    \end{eqnarray}
    such that $C_{\mathcal{H}}(e)$ only needs to be computed once for each $e \in \mathcal{E}_{\perp}$. However, uniformisation offers alternative means of computing the costs for the idling decisions. Uniformisation will be preferred.
    \item The expected discounted holding cost of a specified number of customer arrivals during the interval $t_e$
    \begin{eqnarray}
        C_{\mathcal{I}}\left(n_{\lambda_1},n_{\lambda_2}\mid e\right) & = & \int_{0}^{\infty}f_e(t) \times \left( \int_{0}^t e^{-\beta \tau} \left(\vec{c}\, \odot \vec{\phi}_e(\tau)\right)\, d\tau \right)_{\mbox{\texttt{coord2idx}}(n_{\lambda_1},n_{\lambda_2})} \, dt\\
        & = & \int_{0}^{\infty}f_e(t) \times \left(\vec{c}\, \odot \vec{\Phi}_e^{\beta}(t) \right)_{\mbox{\texttt{coord2idx}}(n_{\lambda_1},n_{\lambda_2})} \, dt \label{eq: arrival cost}
    \end{eqnarray}
    where $\vec{c}$ is a cost vector that contains $c_1 n_1 + c_2n_2$ at index $i = \mbox{\texttt{coord2idx}}(n_1,n_2)$, $\odot$ is the element-wise dot product or Hadamard product and $\vec{\Phi}_e^{\beta}(t) = \int_{0}^t e^{-\beta \tau} \vec{\phi}_e(\tau)\, d\tau$ is a vector function $\vec{\Phi}_{e}^{\beta}: \mathbb{R}_{\geq 0} \to \mathbb{R}_{\geq 0}^{|\mathcal{N}|}$ that relies heavily on $\vec{\phi}_{e}(\tau)$. As such it relies heavily on repeated computation of the matrix exponential over a series of time points. This is a time consuming process if not optimised for. With reference to the ODE approach of page~\pageref{eq: forward euler}, it is recommended that for each $e \in \mathcal{E}_{\perp}\setminus\{\Lambda_1,\Lambda_2\}$ a sequence $\Psi_e(\mathcal{T}) = \left\{ \left( \tau_k, \vec{\phi}_e(\tau_k) \right) \right\}_{\tau_k \in \mathcal{T}}$ be computed using an ODE solver such as the explicit forward Euler method~(\ref{eq: forward euler}). This would allow $\vec{\Phi}_e^\beta(t)$ to be approximated by through $\exp\left\{-\beta t\right\}\vec{g}(t)$ where $\vec{g}(t):\mathbb{R}_{\geq 0} \to \mathbb{R}_{\geq 0}^{|\mathcal{N}|}$ is some function fitted to $\Psi_e(\mathcal{T})$. This is a worthwhile trade-off between memory and computation time. Finally, to get the expected discounted holding cost of an unspecified number of customer arrivals a double summation is performed
    \begin{equation}
        C_{\mathcal{I}}^e = \sum_{n_{\lambda_1}=0}^{N_1} \sum_{n_{\lambda_2}=0}^{N_2} C_{\mathcal{I}}\left(n_{\lambda_1},n_{\lambda_2}\mid e\right)\label{eq: arrival cost summed}
    \end{equation}
    which only needs to be computed once for each $e$ and stored.
\end{enumerate}
The discounted lump-sum holding cost for state $x_k = (n_1,n_2,l_1)$ under server action $l_2=e$ is the sum of these two costs
\begin{eqnarray}\label{eq: lump sum cost final}
    C_{n_1,n_2}^{\beta,e} & = &C_{\mathcal{H}}(n_1,n_2\mid e) +  C_{\mathcal{I}}^e
\end{eqnarray}
such that a length $|\mathcal{X}|$ vector $\vec{C}_{l_2}^\beta$ can accompany its corresponding $\mathbf{P}_{l_2}^\beta$ and $\mathbf{P}_{l_2}$ where $C_{n_1,n_2}^{\beta,e}$ has been placed at index $i = \mbox{\texttt{coord2idx}}(n_1,n_2)$. For the action to idle, uniformisation provides a simple alternative. Furthermore, it should be noted that idling only requires the holding cost pertaining to existing customers as it ends once an unseen customer arrives. The discounted lump-sum holding cost for is provided as follows
\begin{eqnarray}
    C_{n_1,n_2}^{\beta,\Lambda_i} & = & \left(\frac{c_1}{\beta+\gamma}\right)n_1 + \left(\frac{c_2}{\beta+\gamma}\right)n_2 \\
    & = & \bar{c}_1 n_1 + \bar{c}_2 n_2.
\end{eqnarray}

\subsection{Model errors due to truncation}

A computer implementation of the model can only consider a finite number of arrivals and transitions between a finite state-space. This corresponds to performing truncation on the generator matrix and transition probability matrix, respectively. 

\subsubsection{Generator matrix truncation}\label{section: generator matrix truncation}

The generator matrix of the bi-variate birth process~(\ref{eq: bi-variate generator matrix}) has played a fundamental role in building the SMDP cost and transition models. However, (\ref{eq: bi-variate generator matrix}) did not address the fact that $\mathbf{Q}$ is of finite dimensions $|\mathcal{N}|\times |\mathcal{N}|$. Naturally, the question arises of how $\mathbf{Q}$ should be truncated for all the cases when $n_{\lambda_i} = N_i,\,i=1,2$. Two options are presented in this section: the \emph{absorbing state} approach and the \emph{unassigned outflow} approach. 

The absorbing state approach assigns all flows to eventually reach the largest state/index $\bar{i} = \mbox{\texttt{coord2idx}}(N_1,N_2)$ such that the generator matrix takes the form
\begin{equation}
    \mathbf{Q}_{|\mathcal{N}|\times |\mathcal{N}|} = 
    \begin{bmatrix}
    \mathbf{Q}_{(|\mathcal{N}|-1)\times (|\mathcal{N}|-1)} & \vec{\lambda}_{(|\mathcal{N}|-1)\times 1}\\
    0 & 0 
    \end{bmatrix}\label{eq: absorbing state representation}
\end{equation}
where $\vec{\lambda}$ is a vector of rates flowing into the absorbing state which is found at the bottom right corner. This approach allows equation~(\ref{eq: bi-variate generator matrix}) to be truncated as below
\begin{equation}
    \mathbf{Q}_{i,j} = 
    \begin{cases}
    \lambda_1, & i = \mbox{\texttt{coord2idx}}(N_1-1,n_{\lambda_2}), j = \mbox{\texttt{coord2idx}}(N_1,n_{\lambda_2})\\
    \lambda_2, & i = \mbox{\texttt{coord2idx}}(n_{\lambda_1},N_2-1), j = \mbox{\texttt{coord2idx}}(n_{\lambda_1},N_2)\\
    -\lambda_1, & i = \mbox{\texttt{coord2idx}}(N_1-1,n_{\lambda_2}), j = \mbox{\texttt{coord2idx}}(N_1-1,n_{\lambda_2})\\
    -\lambda_2, & i = \mbox{\texttt{coord2idx}}(n_{\lambda_1},N_2-1), j = \mbox{\texttt{coord2idx}}(n_{\lambda_1},N_2-1)\\
    0, &  i = \mbox{\texttt{coord2idx}}(N_1,N_2), j = \mbox{\texttt{coord2idx}}(N_1,N_2)
    \end{cases}\label{eq: absorbing state cases}
\end{equation}
The unassigned outflow approach proceeds to pretend if the matrix was never truncated through extracting a sub-generator matrix from a seemingly infinite or much larger parent

\begin{equation}
    \mathbf{Q}_{(|\mathcal{N}|+|\mathcal{M}|)\times(|\mathcal{N}|+|\mathcal{M}|)} = 
    \begin{bmatrix}
    \mathbf{Q}_{|\mathcal{N}|\times |\mathcal{N}|} & \mathbf{Q}_{|\mathcal{N}|\times |\mathcal{M}|} \\
    \mathbf{Q}_{|\mathcal{M}|\times |\mathcal{N}|} & \mathbf{Q}_{|\mathcal{M}|\times |\mathcal{M}|}
    \end{bmatrix}\label{eq: unassigned outflow representation}
\end{equation}
where $|\mathcal{M}|>1$. The top left entry is extracted as the desired child matrix. The name of this approach comes from the fact that all states in the child matrix will have the usual inflow $-(\lambda_1 + \lambda_2)$ but not all states with have the full outflow $(\lambda_1 + \lambda_2)$. More specifically, the states subject to truncation will not have the full outflow. The missing outflow is usually assigned to states in the top right entry. This entry is a fictitious entity such that the these outflows are never assigned a designated state. Hence, the outflow are simply unassigned. This approach truncates (\ref{eq: bi-variate generator matrix}) as follows
\begin{equation}
    \mathbf{Q}_{i,j} = 
    \begin{cases}
    \lambda_1, & i = \mbox{\texttt{coord2idx}}(N_1-1,n_{\lambda_2}), j = \mbox{\texttt{coord2idx}}(N_1,n_{\lambda_2})\\
    \lambda_2, & i = \mbox{\texttt{coord2idx}}(n_{\lambda_1},N_2-1), j = \mbox{\texttt{coord2idx}}(n_{\lambda_1},N_2)\\
    -(\lambda_1+\lambda_2), & i = \mbox{\texttt{coord2idx}}(N_1,N_2), j = \mbox{\texttt{coord2idx}}(N_1,N_2).
    \end{cases}\label{eq: unassigned outflow cases}
\end{equation}
In comparing (\ref{eq: absorbing state cases}) with (\ref{eq: unassigned outflow cases}) it is apparent that the absorbing state case maintains a valid generator matrix where inflow and outflow is balanced. The same cannot be said for the unassigned outflow method. The outcome of this is that when $t$ is large enough such that most of the weight of state probabilities are near the truncated region on $\vec{\phi}(t)$ then the following is observed:
\begin{itemize}
    \item The absorbing state approach maintains a valid probability mass function $\vec{\phi}(t)$ but biases the absorbing state/index $\bar{i}$.
    \item The unassigned outflow approach does not introduce any bias but it fails to produce a valid probability mass function $\sum_{j} \vec{\phi}_j(t) < 1$.
\end{itemize}
These results are illustrated in figure~\ref{fig:state probs truncation}. The lowest entry of this figure contains simulated state probabilities. Simulation is not subject to truncation and can in theory deal with very large state spaces if given enough time to run and large enough sample size. Its purpose is to illustrate that the unassigned outflow approach does not introduce any bias. In essence, simulation reveals the \emph{true model} $\vec{\varphi}$ impervious to truncation.

It is evident that truncation may introduce two types of errors: \emph{bias} $\epsilon_1(N_1,N_2,t) = \sum_{j=1}^{N_1\times N_2}|\vec{\varphi}_j(t) - \vec{\phi}_j(t)|$ and \emph{validity} $\epsilon_2(N_1,N_2,t) = 1 - \sum_{j=1}^{N_1\times N_2} \vec{\phi}_j(t)$. These errors can be avoided if $N_1$ and $N_2$ are chosen large enough. However, as $t \to \infty$ then $N_i \to \infty$ in order to maintain $\epsilon_1$ and $\epsilon_1$ at zero. While integrals such as those found in the transition model~(\ref{eq: arrival probs}) and cost model (\ref{eq: arrival cost}) contain an integral over the domain $t \in \mathbb{R}_{\geq 0}$, relevant computation is performed only where $\forall t \in \mathbb{R}_{\geq 0}: \, f_e(t) > \varepsilon $. Here $\varepsilon$ is a very small constant such as \emph{machine precision}. This means a finite upper bound $\bar{t}_e$ can be established as to allow for finite $N_i$ ensuring zero errors. 

Some of the event duration density functions may have finite support $t \in [ \underline{t},\overline{t}]$ such that the upper bound is clear. However, for those with infinite support $t \in \mathbb{R}_{\geq 0}$ (such as the exponential distribution) an approximate upper bound is established through using a percentile $\alpha$ very close to one. It follows that $\bar{t}_e = F_e^{-1}(\alpha)$ where $F_e^{-1} : \mathbb{R}\cap[0,1] \to \mathbb{R}_{\geq 0}$ is the inverse of the cumulative function also known as the quantile function. Even though finite $N_i$ can be selected such that $\epsilon_j(N_1,N_2,\bar{t}_e) = 0$, the resulting $(N_1N_2) \times (N_1N_2)$ sparse generator matrix may not allow for computation in reasonable time. As such a $N_i^{*} < \underline{N}_i$ can be selected as a trade-off between error and computation budget where $\underline{N}_i$ is the smallest integer in asserting $\epsilon_j ( \underline{N}_1, \underline{N}_2, \overline{t}_e) = 0$.

\begin{figure}[!htbp]
    \centering
    \includegraphics[width=0.6\textwidth]{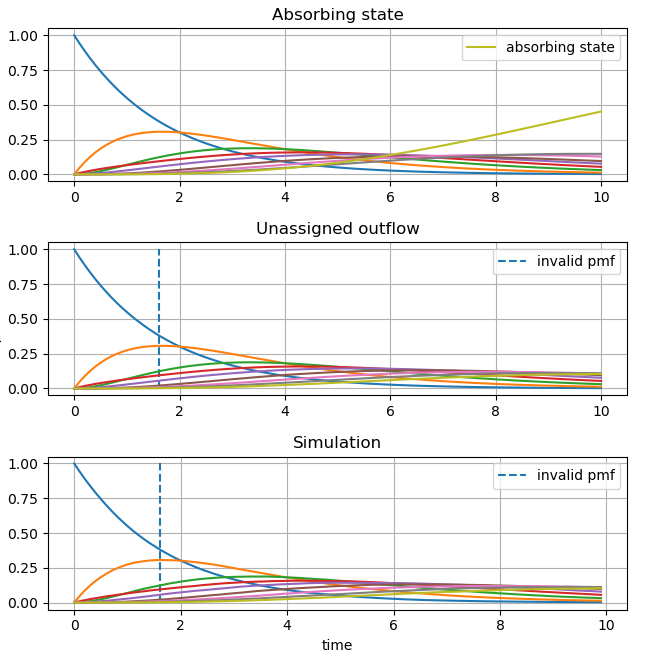}
    \caption{State probabilities $\vec{\phi}(t)$.}
    \label{fig:state probs truncation}
\end{figure}

In performing truncation with $N_i^*$, it would be helpful to understand how error is produced by the two methods. Figure~\ref{fig:cost truncation} presents an empirical comparison between the two approaches in computing (\ref{eq: arrival cost summed}). The two approaches have been compared against the true model $\vec{\varphi}(t)$ as to see which method follows it most closely. The evolution of cost has been presented graphically as it is a more reasonable quantity to visualise than the transition model. Nonetheless, the expected arrival probabilities (\ref{eq: arrival probs}) have been presented as a vector $\vec{P}_e$ for the same problem in table~\ref{tab:transition probabilites} when $N_i^* < \underline{N}_i$ such that error is introduced. More specifically, $\vec{P}_e = [\mathscr{P}(n_{\lambda_1},n_{\lambda_2}|e)]$ where this entry is found at index $i = \mbox{\texttt{coord2idx}}(n_{\lambda_1},n_{\lambda_2})$.

\begin{figure}[!htbp]
    \centering
    \includegraphics[width=0.6\textwidth]{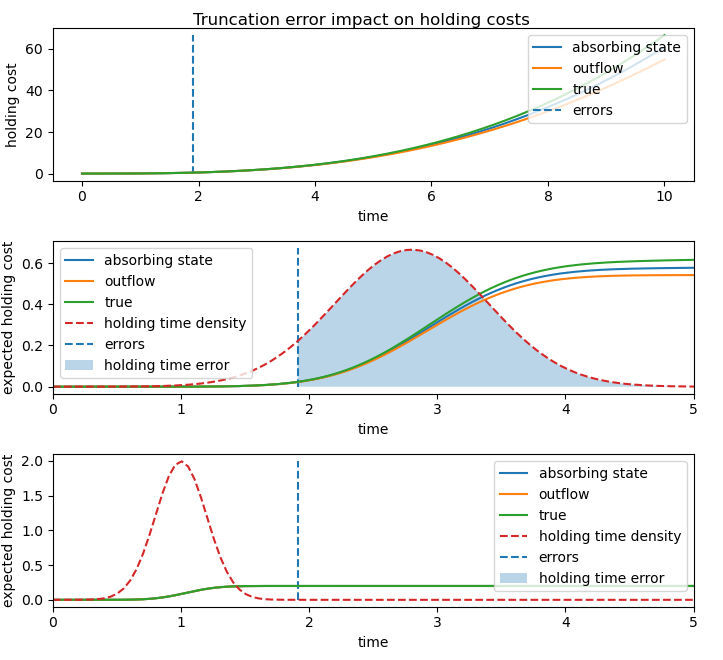}
    \caption{Holding cost error analysis. TOP: holding cost over time MIDDLE: expected holding cost with $N_i^*< \underline{N}_i$ BOTTOM: expected holding cost with $N_i^* \geq \underline{N}_i$}
    \label{fig:cost truncation}
\end{figure}

\begin{table}[!htbp]
    \centering

\begin{tabular}{lrrrr}
\toprule
Index &  Absorbing &  Unassigned &  True (Snapshot) & True (Pooled) \\
\midrule
0 &   0.575335 &    0.575335 &  0.575335 &  0.575335 \\
1 &   0.156952 &    0.156952 &  0.156952 &  0.156952 \\
2 &   0.022444 &    0.022444 &  0.022444 &  0.024857 \\
3 &   0.159181 &    0.159181 &  0.156952 &  0.156952 \\
4 &   0.045060 &    0.045060 &  0.044888 &  0.044888 \\
5 &   0.006699 &    0.006699 &  0.006688 &  0.007435 \\
6 &   0.025593 &    0.023133 &  0.022444 &  0.024857 \\
7 &   0.007496 &    0.006743 &  0.006688 &  0.007435 \\
8 &   0.001239 &    0.001037 &  0.001033 &  0.001287 \\
\bottomrule
\end{tabular}
    \caption{Transition probabilities under truncation ($N_i^* < \underline{N}_i$) and the true model. Note that $N_1^*=N_2^*=3$.}
    \label{tab:transition probabilites}
\end{table}

Before proceeding to discuss figure~\ref{fig:cost truncation} and table~\ref{tab:transition probabilites}, the following two conjectures are made.
\begin{conjecture}\label{conjecture: absorbing state error}
In performing error-prone truncation ($N_i^* < \underline{N}_i$) using the \emph{absorbing state} approach then $\epsilon_1(N_1^*,N_2^*,\bar{t}) > 0$ and $\epsilon_2(N_1^*,N_2^*,\bar{t}) = 0$. Hence, it can maintain a valid probability mass function but biases larger states.
\end{conjecture}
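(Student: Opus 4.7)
The plan decomposes the conjecture into two independent assertions—validity $\epsilon_2(N_1^*,N_2^*,\bar{t}) = 0$ and bias $\epsilon_1(N_1^*,N_2^*,\bar{t}) > 0$—and verifies each from the construction~(\ref{eq: absorbing state representation})--(\ref{eq: absorbing state cases}).

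For the validity part, I would first verify that, once the boundary cases in~(\ref{eq: absorbing state cases}) are assembled with the interior entries of~(\ref{eq: bi-variate generator matrix}), the resulting matrix is a bona fide generator: off-diagonal entries remain non-negative, and every row sums to zero, since each boundary transition that would leave the truncated region has been rerouted into $\bar{i}$ rather than discarded. It then follows by standard Kolmogorov arguments that $\exp\{\mathbf{Q}t\}$ is row-stochastic, so $\vec{\phi}(t) = \vec{e}_0\,\exp\{\mathbf{Q}t\}$ lies in $\mathbf{\Delta}^{|\mathcal{N}|-1}$ for every $t \geq 0$, giving $\sum_j \vec{\phi}_j(\bar{t}) = 1$ and hence $\epsilon_2(N_1^*, N_2^*, \bar{t}) = 0$ immediately.

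For the bias part, the decisive input is the hypothesis $N_i^* < \underline{N}_i$: by the minimality definition of $\underline{N}_i$, the untruncated bi-variate birth process has strictly positive probability $p_0 > 0$ of occupying a state outside the truncated region by time $\bar{t}$, so $\sum_j \vec{\varphi}_j(\bar{t}) \leq 1 - p_0 < 1 = \sum_j \vec{\phi}_j(\bar{t})$ by the validity step. A one-line telescoping estimate then gives
\begin{equation*}
\epsilon_1(N_1^*, N_2^*, \bar{t}) = \sum_{j=1}^{|\mathcal{N}|} \left| \vec{\varphi}_j(\bar{t}) - \vec{\phi}_j(\bar{t}) \right| \geq \sum_{j=1}^{|\mathcal{N}|} \left( \vec{\phi}_j(\bar{t}) - \vec{\varphi}_j(\bar{t}) \right) \geq p_0 > 0.
\end{equation*}
To anchor the qualitative phrase \emph{biases larger states}, I would then set up a synchronous coupling in which the truncated and true processes are driven by the same Poisson streams of rates $\lambda_1$ and $\lambda_2$ up to the first exit time $\sigma$ of the true process from the truncated region. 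Before $\sigma$ the sample paths coincide exactly; from $\sigma$ onward the truncated process is funnelled toward $\bar{i}$ by the redirected boundary flows while the true process departs into fresh states. This identifies the surplus $\vec{\phi}_{\bar{i}}(\bar{t}) - \vec{\varphi}_{\bar{i}}(\bar{t})$ with $\mathscr{P}(\sigma \leq \bar{t}) \geq p_0$ and shows agreement elsewhere within the truncated region.

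The main obstacle I expect is the coupling step rather than the generator-validity computation. Because the absorbing-state construction redirects only the \emph{exiting} fraction of a boundary state's outflow (say, the $\lambda_1$ component at $(N_1^*-1, n_{\lambda_2})$) while leaving the interior-directed rates untouched, the shared-stream coupling must be set up with carefully thinned Poisson processes so that the two chains genuinely agree on $[0, \sigma)$ and so that the redirected mass really does accumulate at $\bar{i}$ rather than at an intermediate boundary node. Once that book-keeping is in place, the qualitative bias claim follows from the coupling and the quantitative bound $\epsilon_1 \geq p_0$ from the telescoping step above.
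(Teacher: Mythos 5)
The paper offers no proof of this statement at all: it is stated as a conjecture and supported only by the empirical evidence of Figure~\ref{fig:cost truncation} and Table~\ref{tab:transition probabilites}, so your proposal is not a variant of the paper's argument but an attempt to supply a proof where the paper supplies none. On its merits, the core of your argument is sound and would upgrade the conjecture to a proposition. The validity half is the standard fact that the matrix exponential of a conservative generator is row-stochastic; the only work is to verify that the assembled boundary cases really do give zero row sums, and here you should be aware that the paper's explicit case listing~(\ref{eq: absorbing state cases}) is ambiguous (for instance the diagonal entry $-\lambda_1$ at $(N_1-1,n_{\lambda_2})$ cannot coexist with an outgoing $\lambda_2$ rate unless $n_{\lambda_2}=N_2$), so you would first have to fix a precise, internally consistent version of the absorbing-state generator before the row-sum check means anything. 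The bias half is clean: $\sum_j\lvert\varphi_j-\phi_j\rvert \geq \sum_j(\phi_j-\varphi_j) = 1-\sum_j\varphi_j(\bar t) = p_0$, which is positive whenever the untruncated process can escape the retained region by time $\bar t$.

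Two caveats. First, your argument actually shows $\epsilon_1>0$ for \emph{every} finite truncation of a Poisson arrival process with $\bar t>0$, since the escape probability is never exactly zero; the threshold $\underline{N}_i$ therefore does not exist in an exact sense and only makes sense relative to the machine-precision convention the paper adopts. This means the hypothesis $N_i^*<\underline{N}_i$ is doing no logical work in your proof, and the dichotomy the conjecture draws between $N_i^*<\underline{N}_i$ and $N_i^*\geq\underline{N}_i$ is really a statement about numerical magnitudes, not exact zeros --- worth saying explicitly if you write this up. Second, the coupling construction is not needed for the formal claim ($\epsilon_1>0$ and $\epsilon_2=0$); it only serves the informal gloss ``biases larger states,'' and as you note it requires care because only the exiting component of each boundary state's outflow is redirected. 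Given the ambiguity in the paper's specification of where that redirected flow lands, I would present the coupling as a remark contingent on the chosen construction rather than as part of the proof proper.
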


\begin{conjecture}\label{conjecture: unassigned outflow error}
In performing error-prone truncation ($N_i^* < \underline{N}_i$) using the \emph{unassigned outflow} approach then $\epsilon_1(N_1^*,N_2^*,\bar{t}) = 0$ and $\epsilon_2(N_1^*,N_2^*,\bar{t}) > 0$. Hence, it does not bias any states but cannot maintain a valid probability mass function.
\end{conjecture}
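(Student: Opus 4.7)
The plan is to show that the pure-birth monotone structure of the bi-variate arrival process makes the interior dynamics of the truncated model identical to those of the infinite true model. Since probability mass in a bi-variate birth process can only flow toward larger coordinates, no state in $\mathcal{N}$ can receive inflow from a state outside $\mathcal{N}$, and the unassigned outflow construction preserves all rate entries among states in $\mathcal{N}$ verbatim.

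Concretely, I would first write the Kolmogorov forward equations $d\vec{\phi}/dt = \vec{\phi}\,\mathbf{Q}$ for the unassigned outflow truncation and $d\vec{\varphi}/dt = \vec{\varphi}\,\mathbf{Q}^{\infty}$ for the true infinite bi-variate birth process. Restricting $\vec{\varphi}$ to coordinates in $\mathcal{N}$ and invoking the fact that $\mathbf{Q}^{\infty}_{i,j}=0$ whenever $i \notin \mathcal{N}$ and $j \in \mathcal{N}$ (pure-birth monotonicity: the only non-zero off-diagonal entries in (\ref{eq: bi-variate generator matrix}) go from $(n_{\lambda_1},n_{\lambda_2})$ to $(n_{\lambda_1}+1,n_{\lambda_2})$ or $(n_{\lambda_1},n_{\lambda_2}+1)$), the restricted system reads $d(\vec{\varphi}|_{\mathcal{N}})/dt = (\vec{\varphi}|_{\mathcal{N}})\,\mathbf{Q}$, with $\mathbf{Q}$ exactly the unassigned outflow generator from (\ref{eq: unassigned outflow cases}). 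Both $\vec{\phi}$ and $\vec{\varphi}|_{\mathcal{N}}$ satisfy the same linear ODE with the same initial condition $\vec{e}_0$, so by uniqueness of solutions, $\vec{\phi}_j(t) = \vec{\varphi}_j(t)$ for every $j \in \mathcal{N}$ and every $t \geq 0$. This yields $\epsilon_1(N_1^*, N_2^*, \bar{t}) = 0$ directly from its definition.

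For the validity error, note that $\vec{\varphi}$ is a valid distribution on the infinite state space, so $\sum_{j \in \mathcal{N}} \vec{\varphi}_j(\bar{t}) + \sum_{j \notin \mathcal{N}} \vec{\varphi}_j(\bar{t}) = 1$. The error-prone condition $N_i^* < \underline{N}_i$ together with the already established $\epsilon_1 = 0$ forces $\epsilon_2 > 0$ by the very definition of $\underline{N}_i$ as the smallest size at which both errors simultaneously vanish. Substituting the equality from the previous step gives $\sum_{j \in \mathcal{N}} \vec{\phi}_j(\bar{t}) = \sum_{j \in \mathcal{N}} \vec{\varphi}_j(\bar{t}) = 1 - \sum_{j \notin \mathcal{N}} \vec{\varphi}_j(\bar{t}) < 1$, so $\epsilon_2(N_1^*, N_2^*, \bar{t}) > 0$ as required.

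The main obstacle is formalising the pure-birth decoupling, namely that the $|\mathcal{N}|$-dimensional sub-system extracted from the infinite Kolmogorov equations closes on itself. This is where the unassigned outflow method differs crucially from the absorbing state method, whose artificial transitions into the corner destroy the decoupling and force bias into the interior. Once the decoupling is recorded as an observation on the sparsity pattern of $\mathbf{Q}^{\infty}$, both claims follow from standard ODE uniqueness together with the fact that probability mass physically leaking out of the grid under a pure-birth process never returns.
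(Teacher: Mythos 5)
Your argument is sound, and it goes further than the paper does: the paper states this result only as a conjecture and supports it empirically (the snapshot column of Table~\ref{tab:transition probabilites} and the cost plots in Figure~\ref{fig:cost truncation}), remarking informally that the unassigned outflow matrix ``was always intended to result from a sub-generator matrix of the true model.'' What you have done is formalise exactly that remark. The key observation --- that the off-diagonal entries of the bi-variate birth generator~(\ref{eq: bi-variate generator matrix}) only increase coordinates, so the truncation box receives no inflow from its complement and the restricted Kolmogorov forward equations close on themselves with generator equal to the unassigned outflow matrix~(\ref{eq: unassigned outflow cases}) --- is correct, and ODE uniqueness then gives $\vec{\phi}_j(t)=\vec{\varphi}_j(t)$ on the box for all $t$, hence $\epsilon_1=0$. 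This is a genuine upgrade from conjecture to proposition, and it also cleanly explains the contrast with the absorbing-state construction, whose modified boundary rows break the closure. One caveat worth flagging: your direct computation shows $\sum_{j\in\mathcal{N}}\vec{\phi}_j(\bar t)=1-\sum_{j\notin\mathcal{N}}\vec{\varphi}_j(\bar t)$, and for a Poisson arrival process this deficit is \emph{strictly} positive for every finite $N_i$ and every $\bar t>0$, which means $\underline{N}_i$ as ``the smallest integer with $\epsilon_2=0$'' does not exist in an exact sense --- it only makes sense up to the machine-precision tolerance the paper adopts when bounding the support of $f_e$. Your fallback definitional argument ($\epsilon_1\equiv 0$ plus the definition of $\underline{N}_i$ forces $\epsilon_2>0$ in the error-prone regime) is the one consistent with the paper's framework, so you should lead with that and present the exact-mass computation as the underlying reason rather than as a separate derivation of $\epsilon_2>0$.
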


The top entry of figure~\ref{fig:cost truncation} shows that in the long-run, the absorbing state approach is able to follow the true model more closely. The same result holds when computing the expected cost with respect to the holding time density of the decision event $f_e$. The vertical blue dotted line represent the point at which the absorbing state starts to introduce bias and the unassigned outflow method fails to maintain a valid probability mass function. The bottom most figure shows that when error-free truncation ($N_i^* \geq \underline{N}_i$) is performed both methods follow the true model and the choice of method is trivial.

Table~\ref{tab:transition probabilites} presents the expected arrival probabilities for the error-prone case. In other words, it presents $\vec{P}_e$ for the same experiment as the middle plot in figure~\ref{fig:cost truncation}. Note that $N_i^*$ has been chosen to be 3. Two columns have been allocated to the true model. The true model is essentially a model with a very large generator matrix $N_i^{\varphi} \ggg N_i^*$. Hence, the \emph{snapshot} column returns $\vec{P}_{j}^{\varphi}$ for $j$ that corresponds to $i$ in $\vec{P}^{\phi}$ such that both indices represent the same coordinate\footnote{Note that the true model will have different \texttt{coord2idx} and \texttt{idx2coord} functions than the truncated models.}. Subsequently, it depicts only a restricted portion of the model and does not sum up to one. The \emph{pooled} column corrects for this. As the name implies, it pools the unseen probabilities of the true model into one of the seen/snapshot states. This is done by pooling under three cases such that $\forall n_{\lambda_1} \in \mathbb{N}\cap [0,N_1^{\varphi}],n_{\lambda_2}\in \mathbb{N}\cap [0,N_2^{\varphi}]:$ 

\begin{equation} \label{eq: state probs pooling}
    \vec{P}_j^{\varphi} \xrightarrow[]{+} 
    \begin{cases}
    \vec{P}_{i_1}^{\phi}, &  n_{\lambda_1} \leq N_1^*, n_{\lambda_2} > N_2^* \\
    \vec{P}_{i_2}^{\phi}, &   n_{\lambda_1} > N_1^*, n_{\lambda_2} \leq N_2^* \\
    \vec{P}_{i_3}^{\phi}, &  n_{\lambda_1} > N_1^*, n_{\lambda_2} > N_2^*
    \end{cases}
\end{equation}
where $i_1 = \mbox{\texttt{coord2idx}}(n_{\lambda_1},N_2^*)$, $i_2 = \mbox{\texttt{coord2idx}}(N_1^*,n_{\lambda_2})$ and $i_3 = \mbox{\texttt{coord2idx}}(N_1^*,N_2^*)$ using the truncated model mapping function. Furthermore, $j =\mbox{\texttt{coord2idx}}(n_{\lambda_1},n_{\lambda_2})$ is found using the mapping function of the true model $\varphi$. In (\ref{eq: state probs pooling}), $\xrightarrow[]{+}$ denotes the pooling/accumulator operator such that all entries in the left that are mapped to a common index on the right will be summed together. The pooled column contains biased results.

Table~\ref{tab:transition probabilites} helps to support the logic behind conjectures~\ref{conjecture: absorbing state error}~and~\ref{conjecture: unassigned outflow error}. The unassigned outflow method corresponds to the snapshot column while the absorbing state approach corresponds to the pooled column. Hence, the absorbing state can be seen as a corrected version of the unassigned outflow method. The fact that the unassigned outflow method follows the true model should be of little surprise as it was always intended to result from a sub-generator matrix of the true model (\ref{eq: unassigned outflow representation}). Furthermore, the absorbing state performs pooling \emph{without} actually computing the larger true model matrix.

An alternative that has been ignored is to normalise the unassigned outflow approach such that a valid probability mass function may be obtained. This would bias all the states and in an overly optimistic manner. Put differently, lower indices that corresponds to less arrivals will have additional and unjustified probability assigned to it. The pooling and absorbing state corrections allocate the unseen probabilities of higher indices (worse states) to the most pessimistic indices. While pessimistic, it can be seen from figure~\ref{fig:cost truncation} that the absorbing state approach produces expected costs that are optimistic when compared to the true cost --- it still underestimates during error-prone truncation.

In conclusion, if $N_i^* < \underline{N}_i$ due to constraints on the computational budget then the absorbing state approach to truncation should be preferred. It underestimates the expected holding cost (due to arrivals) to a lesser degree and provides less overly optimistic arrival probabilities (recall that arrivals dictate the next state in the embedded chain). Otherwise, if $N_i^* \geq \underline{N}_i$ then the choice of method is trivial.

\subsubsection{Transition matrix truncation}\label{section: tpm truncation}

The $|\mathcal{X}|\times |\mathcal{X}|$ transition matrix $\mathbf{P}_{l_2}$ describes a transition from $x_k = (n_1,n_2,l_1) \to  x_{k+1} = (n_1+n_{\lambda_1}-e_{\mu_1},n_2+n_{\lambda_2}-e_{\mu_2},l_1')$ using probabilities extracted from $\vec{P}_{l_2}$. It is seen that arrivals play a large role in driving state transitions. Here, $e_{\mu_i}$ denotes the fact that a completed service event may have decremented the queue number and $l_1'$ proposes that a switching event may have changed the server location. Section~\ref{section: generator matrix truncation} has already discussed the truncation errors found in $\vec{P}_{l_2}$ due to a truncated generator matrix. This section briefly deals the more straightforward issue of where to assign $\mathscr{P}(x_{k+1}\mid x_{k},l_2)$ given that $x_{k+1}> |\mathcal{X}|$.

Section~\ref{section: notation} described $\mathcal{X} \subseteq \mathbb{N}_{0}$ as for generalisation purposes and to highlight the fact that a queuing system may have dimensions of infinite length. If $\mathcal{X}_i = \mathbb{N}\cap [0,X_i]$ where $X_i \in \mathbb{N}\cap [1,\infty)$ then $\mathcal{X}$ can be constrained to be finite $\mathcal{X} = \mathcal{X}_1 \times \mathcal{X}_2 \times \{ 0,1 \}$. Hence, $\mathbf{P}_{l_2}$ is a finite data-structure that will receive construction instructions from an infinite size kernel $\mathscr{P}(x_{k+1}|x_k,l_2)$. Donor entries will come from the arrival probability vector $\vec{P}_{l_2}^{\phi}$ while truncation is performed by the pooling method, similar to (\ref{eq: state probs pooling}). An additional pair of navigation functions will be required: $\mbox{\texttt{triple2idx}}(n_1,n_2,l_1) = 2 n_1 X_2 + 2 n_2 + l_1$ and $\mbox{\texttt{idx2triple}}(i) = \left( \lfloor i/(2X_2) \rfloor \, , \, \lfloor (i \mod 2 X_2)/2 \rfloor\, , \, (i\mod 2X_2) \mod 2\right)$.

Consider the $i^{th} = \mbox{\texttt{triple2idx}}(n_1^k,n_2^k,l_1^k) = \mbox{\texttt{triple2idx}}(x_k)$ of $\mathbf{P}_{l_2}$ such that the following construction takes place using the notation $\Delta n_i = n_i^{k+1} - n_i^{k}$
\begin{equation}\label{eq: transition model pooling}
    \left(\vec{P}_{l_2}^{\phi}\right)_{\xi} \xrightarrow[]{+} 
    \begin{cases}
    \left(\mathbf{P}_{l_2}\right)_{i,j}, & n_1^{k+1} < X_1,n_2^{k+1} < X_2 \\
    \left(\mathbf{P}_{l_2}\right)_{i,j_1}, & n_1^{k+1} \geq X_1,n_2^{k+1} < X_2 \\
    \left(\mathbf{P}_{l_2}\right)_{i,j_2}, & n_1^{k+1} < X_1,n_2^{k+1} \geq X_2 \\
    \left(\mathbf{P}_{l_2}\right)_{i,j_3}, & n_1^{k+1} \geq X_1,n_2^{k+1} \geq X_2 
    \end{cases}
\end{equation}
where
\begin{itemize}
    \item $\xi =\mbox{\texttt{coord2idx}}(\Delta n_1^k,\Delta n_2^k)$ using the mapping function that pertains to $\phi$. If $l_2 = 1$ then $\xi = \mbox{\texttt{coord2idx}}(\Delta n_1^k +\mathbbm{1}_{\left\{l_1=1 \right\}},\Delta n_2^k+\mathbbm{1}_{\left\{l_1=2 \right\}})$ as to correct for the serviced customer.
    \item $j = \mbox{\texttt{triple2idx}}(n_1^{k+1},n_2^{k+1},l_1^{k+1})$ is a unique index that will receive only one probability. This is the truncation free part of the transition model.
    \item The following indices will be reused an denote entries that will receive more than one probability. This region of the transition model experiences truncation.
    \begin{itemize}
        \item $j_1 = \mbox{\texttt{triple2idx}}(X_1,n_2^{k+1},l_1^{k+1})$
        \item $j_2 = \mbox{\texttt{triple2idx}}(n_1^{k+1},X_2,l_1^{k+1})$
        \item $j_3 = \mbox{\texttt{triple2idx}}(X_1,X_2,l_1^{k+1})$
    \end{itemize}
\end{itemize}
For idling $l_2=2$, the same constructions holds. The only difference is that a single arrival will occur. This greatly reduces the amount of pooling performed and truncation error. 

Additional caveats includes dealing with infeasible actions and choosing a size $N_i$ as to regulate the number of permissible arrivals. The former occurs at rows $i = \mbox{\texttt{triple2idx}}(x_k)$ where at $n_j=0,l_1=j$. The entries in this row are trivial as this row will never be consulted by the Bellman Equations of algorithms that solve for the optimal policy. As such, it can be left as a row of zeros. Such a decision would make $\mathbf{P}_{l_2}$ a non-stochastic matrix. However, $\mathbf{P}_{l_2}$ are data-structures intended for storage and were never required to be stochastic matrices. On the other hand, $\mathbf{P}_{\pi}$ (see definition~\ref{def: mdp transition model}) is required to be a stochastic matrix.

In dealing with the appropriate selection of $N_i$, two aspects require attention.
\begin{enumerate}
    \item If $N_i$ is too small a truncation error in the generator matrix $\epsilon_i$ will be present.
    \item If $N_i \geq X_i$ for $i=1,2$ then $\forall x_{k},x_{k+1} \in \mathcal{X}: \, \mathscr{P}(x_{k+1}|x_k,l_2)\geq 0$ if $j_{k+1} \geq j_k$ else $\mathscr{P}(x_{k+1}|x_k,l_2)= 0$ where $j_k = \mbox{\texttt{triple2idx}}(x_k)$. This case considers the most potential transitions and would be recommended if the model has long switch-over durations. If $N_i < X_i$ sparsity will be introduced to the upper right of the transition model. Wit short switch-over durations, this approach may yield reduced memory requirements of a sparsity aware data structure is used for $\mathbf{P}_{l_2}$ such as a \emph{list-of-lists} or \emph{Harwell-Boeing format} (see chapter 10.3.4 of \cite{stewart2009probability}).
\end{enumerate}

Lastly, the discounted transition model $\mathbf{P}_{l_2}^{\beta}$ will have to be constructed as in (\ref{eq: transition model pooling}) using the discounted arrival probabilities $\vec{P}^\beta_{l_1}$ that were computed from (\ref{eq: discounted arrival probs}).

\subsection{Bellman Equations}\label{section: smdp bellman equations}

The standard MDP framework of section~\ref{section:MDP} does not explicitly take time into account. It only sees transitions and does not distinguish between them based on some holding time. In fact, it assumes that all transitions are of the same unit length as is apparent from the constant discount factor $\alpha$ in equation~(\ref{eq: discounted bellman policy eval discrete}). The standard Bellman equation can handle SMDPs without any modification if time has been implicitly accounted for in the cost $\vec{C}_{\pi}$ and transition $\mathbf{P}_{\pi}^{\beta}$ model. As the transition model already takes variable discounting into account (varies based on decision event $l_2=e$), setting $\alpha=1$ gives the following Bellman policy evaluation equations
\begin{eqnarray}
    \vec{J}_{\pi}^{\beta} & = & \vec{C}_{\pi} + \mathbf{P}_{\pi}^{\beta}\vec{J}_{\pi}^{\beta}\label{eq: smdp bellman policy evaluation equations}\\
    & = & \mathbf{T}_{\pi} \vec{J}_{\pi}^{\beta}
\end{eqnarray}
where $\vec{C}_{\pi}$ is constructed from $\mathcal{C} = \{C_{l_2=a}: a \in \mathcal{A} \}$ and $\mathbf{P}_{\pi}^\beta$ from  $\mathcal{P}^{\beta} = \{ \mathbf{P}_{l_2=a}^\beta: a \in \mathcal{A} \}$. For reference, $\vec{C}_{l_2}$ is defined by (\ref{eq: lump sum cost final}) while $\mathbf{P}_{l_2}^{\beta}$ is defined by (\ref{eq: discounted arrival probs}) an truncated using (\ref{eq: transition model pooling}). This paper uses the policy iteration algorithm as opposed to value iteration. This is because the system of well-defined linear equations (\ref{eq: smdp bellman policy evaluation equations}) is deemed small enough to solve numerically using using commercial solvers. If the systems of equations were too large, \emph{partial policy evaluation}\footnote{This is an iterative method that iterates the bellman policy evaluation equations (\ref{eq: discounted bellman policy eval discrete}) until some convergence is met. It is like value iteration but without the $\max/\min$ operator.} (see chapter 5.8 of \cite{gosavi2015simulation}) or value iteration would be more suitable options for an exact solution. Simulation-based or \emph{reinforcement learning} (RL) methods that provide approximate solutions are not discussed as it is outside the scope of this paper. Such methods can be found in chapter 6 of \cite{gosavi2015simulation} where these RL algorithms are applied to MDPs and SMDPs in both the discounted and average reward cases.

\begin{algorithm}[!htbp]
    \caption{SMDP Policy Iteration}
    \label{algorithm: smdp policy iteration}
    \begin{algorithmic}[1]
    \Procedure{PolicyIteration}{$\mathcal{C}$,$\mathcal{P}^\beta$,$\vec{\pi}_0$,\texttt{maxiter}}
    \If {$\vec{\pi}_0$ \textbf{is} None}
        \State Randomly initialise $\vec{\pi}_0 = [a \sim \mathcal{A}(i)],\quad \forall i \in \mathcal{X}$\Comment{Uniform sampling}
    \EndIf
    \State $k \leftarrow 0$
    \State \texttt{continue} $\leftarrow$ True
    \While {\texttt{continue} \textbf{is} True}
    \State Construct $\vec{C}_{\pi_k}$ and $\mathbf{P}_{\pi_k}^\beta$ from $\mathcal{C}$ and $\mathcal{P}^\beta$ under instructions from $\vec{\pi}_k$.
    \State \textit{Policy evaluation}: solve for $\vec{J}_{\pi_k}$ in
    \begin{equation}\label{eq: policy evluation solve for J}
        \left( \mathbf{I} - \mathbf{P}_{\pi_k}^{\beta} \right)\vec{J}_{\pi_k}  = \vec{C}_{\pi_k}.
    \end{equation}
    \State \textit{Policy Improvement}: create an empty vector $\vec{\pi}_{k+1}$ of length $|\mathcal{X}|$. 
    \State $\Delta_k \leftarrow$ False
    \For {$i = 1,2,\cdots, |\mathcal{X}|$}
        \State Select the optimal action from a greedy one-step look-ahead optimisation.
        \begin{equation}
            \vec{\pi}_{k+1}(i) = \mbox{argmin}_{a \in \mathcal{A}(i)} \left\{ \vec{C}_{a}(i) + \sum_{j=1}^{|\mathcal{X}|}  \mathbf{P}_{a}^{\beta}(i,j) \vec{J}_{\pi_k}(j)   \right\} 
        \end{equation}
        \If {$\vec{\pi}_{k+1}(i) \neq \vec{\pi}_{k+1}(i)$ \textbf{and} $\Delta_k$ \textbf{is} False}\Comment{Element-wise equality check}
        \State $\Delta_k \leftarrow $ True
        \EndIf
    \EndFor
    \If {$\Delta_k$ \textbf{is} False \textbf{or} $k =$ \texttt{maxiter}}\Comment{Breaking conditions}
        \State \texttt{continue} $\leftarrow$ False
    \EndIf
    \State $k \leftarrow k + 1$
    \EndWhile
    \State \Return $\vec{\pi}^* = \vec{\pi}_{k+1}$\Comment{Returns the optimal policy}
    \EndProcedure
    \end{algorithmic}
\end{algorithm}

\section{Continuous-time Markov Decision Process}\label{section: CTMDP}

This paper intends to \emph{test} the hypothesis that retaining some event processes as non-memoryless in a DEDS would lead to a model of the polling system that produces a better performing policy. To do this, an accompanying memoryless model must be available --- a CTMDP. Such a CTMDP can be obtained using the procedures of section~\ref{section: SMDP} whereby $\forall e \in \mathcal{E}: \, e \sim Exp(\cdot|\theta_e)$. While such an approach is modular in the sense that code written for the SMDP will solve for a CTMDP by simply changing the input of event distributions, it is not very efficient. 

A CTMDP can be converted into a DTMDP via uniformisation and the solved for using the standard Bellman equations. This would circumvent the use of the bi-variate generator matrix and all integral involving it. In fact, the uniformisation approach would not perform a single integral.

In solving polling models as a CTMDP, the model has typically relied on the \emph{preemptive} server discipline. Evidence of this can be found in chapter 9.5.3 of \cite{cassandras_book}, example 1.4.2 of \cite{BertsekasVol2} and the following two papers \cite{koole1997assigning,suk1991optimal}. A \emph{non-preemptive} server discipline can be found in \cite{moustafa1996optimal} whereas both server disciplines are treated in \cite{fernandesscheduling}. The paper that \emph{solves} for the optimal policy of a non-preemptive two queue polling model with switch-over durations as a CTMDP is \cite{duenyas_van_oyen_2000_finite_buffer_heuristic}. However, it does not explicitly show how to \emph{model} the CTMDP whereas most of the other papers do. It only recommends that the state of the server be tracked in the state-space as to enforce the non-preemptive condition of the service and switch-over events. 

Modelling the preemptive version is simpler than its non-preemptive counterpart due to the fact that the state of the server need not be tracked. Just as for the SMDP of section~\ref{section: SMDP assumptions}, decision epochs can only take place at the instant that one of the serial events $\mathcal{E}_{\perp}$ have triggered\footnote{In contrast to the SMDP where only decision epochs exhibited the Markov property, the CTMDP retains the Markov property even during an ongoing action.}. This ensures the server to be free $l_2=0$ when making a decision such that no ongoing process is interrupted. This section will develop uniformised CTMDP models for both server disciplines as to illustrate some of the caveats and awkward features of the non-preemptive version. The reader content with using the SMDP machinery instead of uniformisation can skip this section and move onto section~\ref{section: experiments}.

\subsection{Preemptive server discipline}\label{section: preemptive ctmdp}

A global sampling rate will be selected as $\gamma = \lambda_1 + \lambda_2 + \max\left\{ \mu_1,\mu_2,s_{1,2},s_{2,1}  \right\}$ where $\mu_i$ and $s_{i,j}$ are the \emph{rate} parameters\footnote{Another popular parameter is the \emph{scale} parameter which is the inverse of the rate. The scale plays an important role in the characterising \emph{family of exponential distributions} also known as the \emph{Koopman–Darmois family} (see section 4.9 of \cite{pawitan_book}).} of an exponential distribution just like $\lambda_i$. This is the most conservative sampling rate. Some texts use $\gamma  = \lambda_1 + \lambda_2 +  \mu_1+ \mu_2+ s_{1,2}+ s_{2,1}$ \cite{koole1997assigning} which results in sampling the system more frequently and observing more self transitions. Both approaches yield the same optimal policy. 

The cost model computes the holding cost of all existing customers until the system is sampled again
\begin{eqnarray}
    C_{n_1,n_2}^{\beta,e} & = &  \left( \frac{c_1}{\gamma + \beta} \right)n_1 + \left( \frac{c_2}{\gamma + \beta} \right)n_2 \label{eq: ctmdp holding cost} \\
    & = & \bar{c}_1 n_1  + \bar{c}_2 n_2
\end{eqnarray}
such that $\vec{C}_{l_2}$ can be constructed as a $|\mathcal{X}|$-length vector with $C_{n_1,n_2}^{\beta,e}$ inserted at index $i = \mbox{\texttt{triple2idx}}(n_1,n_2,l_1)$. Note that $\forall a \in \mathcal{A}: \, \vec{C}_{l_2=a}  =\vec{C}$ such that $\mathcal{C} = \{ \vec{C} \}$ consists of only a single cost vector. Hence, the cost-per-stage is invariant to the decision. In constructing the transition model, $\mathbf{P}_{l_2}$ can be constructed directly without using a vector of arrival probabilities $\vec{P}_{l_2}$. Furthermore, no discounted version needs to accompany it as a fixed discount factor is used $\alpha = \gamma/(\gamma + \beta)$. The transition kernel for a service at queue $l_1^k = l_1^{k+1}=i$ is defined as
\begin{equation}
    \mathscr{P}(x_{k+1}\mid x_k,e=\mu_i) = 
    \begin{cases}
    \frac{\mu_i}{\gamma}, & n_i^{k+1} = n_i^k - 1, n_j^{k+1} = n_j^k, i \neq j \\
    \frac{\lambda_f}{\gamma}, & n_f^{k+1} = n_f^k + 1, n_j^{k+1} = n_j^k, f \neq j\\
    1 - \frac{\mu_1 - \lambda_1 - \lambda_2}{\gamma},&  n_i^{k+1} = n_i^k , n_j^{k+1} = n_j^k, i \neq j
    \end{cases}
\end{equation}
where $x_k = (n_1^k,n_2^k,l_1^k)$. A switch-over from server $l_1^k=i \to l_1^{k+1}=j $ specifies
\begin{equation}
    \mathscr{P}(x_{k+1}\mid x_{k}, e=s_{i,j}) = 
    \begin{cases}
    \frac{s_{i,j}}{\gamma}, & l_1^k = i, l_1^{k+1} = j,n_i^{k+1} = n_i^k , n_j^{k+1} = n_j^k, i \neq j \\
    \frac{\lambda_f}{\gamma}, & l_1^k = l_1^{k+1}=i, n_f^{k+1} = n_f^k + 1, n_q^{k+1} = n_q^k, f \neq q\\
    1 - \frac{s_{i,j} - \lambda_1 - \lambda_2}{\gamma},& l_1^k = l_1^{k+1}=i, n_i^{k+1} = n_i^k , n_j^{k+1} = n_j^k, i \neq j
    \end{cases}
\end{equation}
while an idling decision at queue $l_1^k = l_1^{k+1}=i$ dictates that
\begin{equation}\label{eq: ilding transition kernel}
    \mathscr{P}(x_{k+1} \mid x_j , e=\Lambda_i) = 
    \begin{cases}
    \frac{\lambda_f}{\gamma}, & n_f^{k+1} = n_f^k + 1, n_q^{k+1} = n_q^k, f \neq q\\
    1 - \frac{\lambda_1 - \lambda_2}{\gamma},&  n_i^{k+1} = n_i^k , n_j^{k+1} = n_j^k, i \neq j.
    \end{cases}
\end{equation}
These three kernels are each used to construct a $|\mathcal{X}| \times |\mathcal{X}|$ matrix $\mathbf{P}_{l_2}$ as to obtain the desired set $\mathcal{P} = \{ \mathbf{P}_{a}: a \in \mathcal{A} \}$. Pooling is used to perform truncation as in section~\ref{section: tpm truncation}. Furthermore, the same caveat applies in that when $\mu_i \not \in \mathcal{A}(x_k)$ then the row of $\mathbf{P}_{\mu_i}$ corresponding to $x_k$ is trivial. Lastly, all states are decision making states such that $\mathcal{D} = \mathcal{X}$.

\subsection{Non-preemptive server discipline}\label{section: non-preemptive ctmdp}

\subsubsection{Assumptions and constraints}

The non-preemptive server discipline needs to take both $l_1$ and $l_2$ into account hence $x_k = (n_1^k,n_2^k,l_1^k,l_2^k) \in \mathcal{X} = \mathcal{X}_1 \times \mathcal{X}_2 \times \{0,1 \} \times \{ 0,1,2 \}$ where $\mathcal{X}_i = \mathbb{N}\cap [0,N_i]$ and $N_i \in \mathbb{N}\cap [1,\infty)$. The index functions are defined using $M_1 =  6X_2$, $M_2= 6$ and $M_3 = 3$ such that $\mbox{\texttt{quad2idx}}(n_1,n_2,l_1,l_2) = n_1 M_1 + n_2 M_2 + l_1 M_3 + l_2$. For some index $i$ the following relations are defined $n_1 = \lfloor i/M_1 \rfloor$, $R_1 = (i \mod M_1$), $n_2 = \lfloor R_1/M_2 \rfloor$, $R_2 = (R_1\mod M_2$), $l_1 = \lfloor R_2/M_3 \rfloor$ and $l_2 = (R_2 \mod M_3)$ such that $\mbox{\texttt{idx2quad}}(i) =(n_1,n_2,l_1,l_2)$. 

Only states where $l_2 = 0$ are decision making states such that $\mathcal{D} = \{ (n_1,n_2,l_1,0): n_i \in \mathcal{X}_i, l_1 \in \{ 0,1 \} \}$. Once a decision is made, the state of the server is assigned a new value $l_2 \in \{0,1,2  \}$. Naturally, the decision to idle $\Lambda_i$ does not change $l_2$. Furthermore, if a $l_2$ is changed as $l_2^k=0$ to $l_2^{k+1} \neq 0$ then an instantaneous transition takes place with probability one, zero cost incurred and no discounting performed ($\alpha=1$). These will be referred to as \emph{linking transitions}. If a decision is made to idle $l_2^k = l_2^{k+1} = 0$ then the non-preemptive requirement enforces the next states to only be those corresponding to arrival occurrences. Self transitions would violate the non-preemptive constraint. Transitions to these states are probabilistic, incur a holding cost and are subject to discounting. Transition from an idling state are not linking transitions but of the usual CTMDP variant. However, the will be referred to as \emph{idling transitions}.

Linking transitions serve the purpose of acting as a "circuit switch" in the transition graph of the CTMDP. That is, they connect the desired regions responsible for the stochastic dynamics of the CTMDP. These regions are only connected at node/states where a decision event has completed $l_2=0$. This is illustrated in figure~\ref{fig: non-preemptive CTMDP} for all three actions executed at a free server.

\begin{figure}[!htbp]
    \centering
    \includegraphics[width=0.85\textwidth]{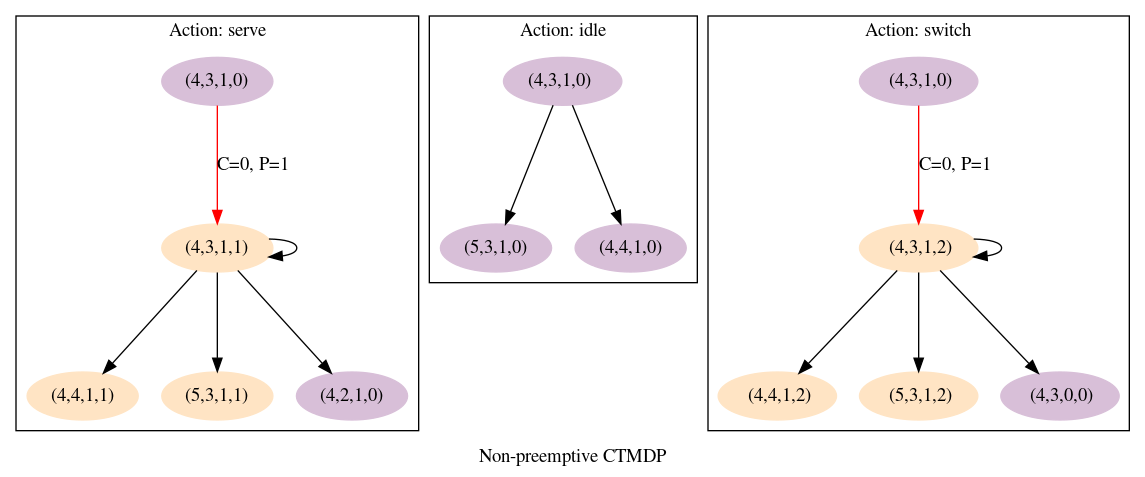}
    \caption{Decision states $x_k \in \mathcal{D}$ (thistle) and dynamics states $x_k \in \mathcal{D}'$ (bisque) with red arrows as decisions and black arrows as probabilistic transitions.}
    \label{fig: non-preemptive CTMDP}
\end{figure}

Apart from idling, the dynamics of the system is governed by $x_k \in \mathcal{D}' = \mathcal{X} \setminus \mathcal{D}$ in that these states allow for real holding times, incur holding costs and are subject to discounting. As mentioned, dynamics states are connected to via the decision states through a linking transition $x_k=i \xrightarrow[]{} \bar{x}_k \in \mathcal{D} \xrightarrow[]{\pi(i) \in \{1,2\}} x_{k+1}\in \mathcal{D}'=j$ where $\xrightarrow[]{\pi(i)\in \{1,2\}}$ represents a linking transition and $\xrightarrow[]{}$ a CTMDP transition. Linking transitions will only connect a decision state to a dynamics state. In contrast, idling transitions will connect decision states to decision states  $x_k=i \xrightarrow[]{} \bar{x}_k \in \mathcal{D} \xrightarrow[]{\pi(i) =1} x_{k+1}\in \mathcal{D}=j$. In such a scenario, prolonged idling can build complexes or clusters of decision states that might serve as an intermediate between dynamics states. Note that the dynamics states do not contain idling. However, idling could be seen as a hybrid between decision and dynamics states.

\subsubsection{Components and matrix model}

In respect to the MDP model, only the rows in $\mathbf{P}_{\pi}$ and entries of $\vec{C}_{\pi}$ corresponding to index $i = \mbox{\texttt{quad2idx}}(x_k), \, \forall x \in \mathcal{D}$ are modified under the non-preemptive policy $\pi: \mathcal{D} \to \mathcal{A}$. The rest of the model remains fixed. More specifically, the cost model can be initialised as a $|X|$-length vector of zeros and the transition model as a $|X|\times |X|$ matrix of zeros. The cost model is filled in at entry $i$ using $C_{n_1,n_2}^{\beta,e}$ as defined in (\ref{eq: ctmdp holding cost}) while the $i^{th}$ row of $\mathbf{P}_{\pi}$ is constructed according to an appropriate transition kernel (depends on the action in progress) where $i = \mbox{\texttt{quad2idx}}(x_k), \, \forall x_k \in \mathcal{D}'$. The service transition kernel requires that the server be at the correct location $l_1^{k}=l_1^{k+1} = i$ and in use $l_2^{k} = 1$.
\begin{equation}
    \mathscr{P}(x_{k+1}\mid x_k,e=\mu_i) = 
    \begin{cases}
    \frac{\mu_i}{\gamma}, & l_2^{k+1}=0,n_i^{k+1} = n_i^k - 1, n_j^{k+1} = n_j^k, i \neq j \\
    \frac{\lambda_f}{\gamma}, & l_2^{k+1}=1, n_f^{k+1} = n_f^k + 1, n_j^{k+1} = n_j^k, f \neq j\\
    1 - \frac{\mu_1 - \lambda_1 - \lambda_2}{\gamma},& l_2^{k+1}=1, n_i^{k+1} = n_i^k , n_j^{k+1} = n_j^k, i \neq j
    \end{cases}
\end{equation}
While a switch-over is in progress from server $i \to j $ the following is given
\begin{equation}
    \mathscr{P}(x_{k+1}\mid x_{k}, e=s_{i,j}) = 
    \begin{cases}
    \frac{s_{i,j}}{\gamma}, & l_1^k = i, l_1^{k+1} = j,l_2^{k+1}=0,n_i^{k+1} = n_i^k , n_j^{k+1} = n_j^k, i \neq j \\
    \frac{\lambda_f}{\gamma}, & l_1^k = l_1^{k+1}=i,l_2^{k+1}=1, n_f^{k+1} = n_f^k + 1, n_q^{k+1} = n_q^k, f \neq q\\
    1 - \frac{s_{i,j} - \lambda_1 - \lambda_2}{\gamma},& l_1^k = l_1^{k+1}=i,l_2^{k+1}=1, n_i^{k+1} = n_i^k , n_j^{k+1} = n_j^k, i \neq j.
    \end{cases}
\end{equation}
This completes the specification of the dynamics model. Note that the entire dynamics model uses the same sampling rate $\gamma = \lambda_1 + \lambda_2 + \max\left\{ \mu_1,\mu_2,s_{1,2},s_{2,1}  \right\}$ as in section~\ref{section: preemptive ctmdp} and hence the same discount factor $\alpha = \gamma/(\gamma+\beta)$.

Given a $|\mathcal{D}|$-length policy $\vec{\pi}$, the cost and transition models can be modified through selecting the $i^{th}$ component/row from the set object corresponding to $a=\vec{\pi}_i$ where $i = \mbox{\texttt{quad2idx}}(x_k), \, \forall x_k \in \mathcal{D}$. The aforementioned sets are $ \mathcal{C} = \{\vec{C}_a:a \in \mathcal{A}  \} $ and $\mathcal{P} = \{ \mathbf{P}_{a}: a \in \mathcal{A} \}$. For the service and switch-over decisions, the kernels describe linking transitions
\begin{equation}
    \mathscr{P}(x_{k+1}\mid x_k , a=\{\mu_i,s_{i,j}\}) = 
    \begin{cases}
    1, & l_2^k = 0 ,  l_2^{k+1}=1, l_1^k=l_1^{k_1}=i, n_f^{k+1} = n_f^k, n_q^{k+1} = n_q^k, f \neq q \\
    0, & \mbox{else}
    \end{cases}
\end{equation}
where $x_{k+1} \in \mathcal{D}'$. Non-preemptive idling requires a sampling rate $\gamma_\Lambda = \lambda_1 + \lambda_2$ such that 
\begin{equation}
    \mathscr{P}(x_{k+1}\mid x_k , a=\Lambda_i) = \begin{cases}
    \frac{\lambda_f}{\gamma_{\Lambda}}, & l_2^{k}=l_2^{k+1}=0, l_1^k=l_1^{k_1}=i, n_f^{k+1} = n_f^k+1, n_q^{k+1} = n_q^k, f \neq q\\
    0, & \mbox{else}
    \end{cases}
\end{equation}
where $x_k,x_{k+1} \in \mathcal{D}$. As for the cost model, $\vec{C}_a,\,  a = 1,2$ are $|\mathcal{X}|$-length vectors of zeros while $\vec{C}_{l_2=0}$ is similarly defined but with entries at  $i = \mbox{\texttt{quad2idx}}(x_k), \, \forall x_k \in \mathcal{D}$ determined through 
\begin{equation}
    C_{n_1,n_2}^{\beta,\Lambda} = \left( \frac{c_1}{\beta + \gamma_{\Lambda}} \right)n_1 + \left( \frac{c_2}{\beta + \gamma_{\Lambda}} \right)n_2. \label{eq: non-preemptive ctmdp holding costs}
\end{equation}
Lastly, the Bellman equations will require state-action-dependent discount factors

\begin{subnumcases}{\alpha_{\pi}(x_k)=}
\frac{\gamma}{\gamma + \beta}, & $x_k \in \mathcal{D}'$\label{eq: discount non-decision state} \\
\frac{\gamma}{\gamma + \beta}, & $x_k \in \mathcal{D} , \pi_k(x_k) = 0$ \label{eq: discount idling}\\
1 , & $x_k \in \mathcal{D} , \pi_k(x_k) \in  \{ 1,2 \}$.\label{eq: discount serve and switch}
\end{subnumcases}

To deal with state-action dependent discount factors, a $|\mathcal{X}|$-length vector $\vec{\alpha}_{\pi}$ needs to be maintained. Similar to $\vec{C}_{\pi}$, it can be initialised by filling in the entries that will remain fixed: allocate (\ref{eq: discount non-decision state}) to entry $i = \mbox{\texttt{quad2idx}}(x_k), \, \forall x_k \in \mathcal{D}'$. The other entries will be modified in accordance with the policy by selecting from $\mathbb{A} = \{ \vec{\alpha}_a : a \in \mathcal{A}\}$ where $\vec{\alpha}_a$ is a zero vector with entries at $i = \mbox{\texttt{quad2idx}}(x_k), \, \forall x_k \in \mathcal{D}$. Additionally, $\vec{\alpha}_0$ uses (\ref{eq: discount idling}) while $\vec{\alpha}_1$ and $\vec{\alpha}_2$ rely on (\ref{eq: discount serve and switch}). Provisioning $\mathcal{C}$, $\mathcal{P}$ and $\mathbb{A}$ permits algorithm~\ref{algorithm: smdp policy iteration} to be modified for solving the CTMDP of this section.

\begin{algorithm}[!htbp]
    \caption{CTMDP Policy Iteration}
    \label{algorithm: ctmdp policy iteration}
    \begin{algorithmic}[1]
    \Procedure{PolicyIteration}{$\mathcal{C}$,$\mathcal{P}$,$\mathbb{A}$,$\vec{\pi}_0$,\texttt{maxiter}}
    \State Initialise $\vec{C}_{\pi}$, $\vec{\alpha}_\pi$ and $\mathbf{P}_{\pi}$ by filling in entries pertaining to $\mathcal{D}'$ \Comment{New}
    \If {$\vec{\pi}_0$ \textbf{is} None}
        \State Randomly initialise $\vec{\pi}_0 = [a \sim \mathcal{A}(i)],\quad \forall i \in \mathcal{X}$
    \EndIf
    \State $k \leftarrow 0$
    \State \texttt{continue} $\leftarrow$ True
    \While {\texttt{continue} \textbf{is} True}
    \State Modify $\vec{C}_{\pi_k}$, $\vec{\alpha}_{\pi_k}$ and $\mathbf{P}_{\pi_k}$ using $\mathcal{C}$, $\mathbb{A}$ and $\mathcal{P}$ under instructions from $\vec{\pi}_k$ at entries concerning $\mathcal{D}$. 
    \State $\bm{\alpha}_{\pi_k} \leftarrow \operatorname{diag}\left(  \vec{\alpha}_{\pi_k}\right)$ \Comment{New}
    \State \textit{Policy evaluation}: solve for $\vec{J}_{\pi_k}$ in \Comment{Updated}
    \begin{equation}\label{eq: policy evluation solve for J alg}
        \left( \mathbf{I} - \bm{\alpha}_{\pi_k}\mathbf{P}_{\pi_k} \right)\vec{J}_{\pi_k}  = \vec{C}_{\pi_k}.
    \end{equation}
    \State \textit{Policy Improvement}: create an empty vector $\vec{\pi}_{k+1}$ of length $|\mathcal{X}|$. 
    \State $\Delta_k \leftarrow$ False
    \For {$i = 1,2,\cdots, |\mathcal{X}|$}
        \State Select the optimal action from a greedy one-step look-ahead optimisation. \Comment{Updated}
        \begin{equation}
            \vec{\pi}_{k+1}(i) = \mbox{argmin}_{a \in \mathcal{A}(i)} \left\{ \vec{C}_{a}(i) + \sum_{j=1}^{|\mathcal{X}|} \vec{\alpha}_{a}(i) \mathbf{P}_{a}(i,j) \vec{J}_{\pi_k}(j)   \right\} 
        \end{equation}
        \If {$\vec{\pi}_{k+1}(i) \neq \vec{\pi}_{k}(i)$ \textbf{and} $\Delta_k$ \textbf{is} False}
        \State $\Delta_k \leftarrow $ True
        \EndIf
    \EndFor
    \If {$\Delta_k$ \textbf{is} False \textbf{or} $k =$ \texttt{maxiter}}
        \State \texttt{continue} $\leftarrow$ False
    \EndIf
    \State $k \leftarrow k + 1$
    \EndWhile
    \State \Return $\vec{\pi}^* = \vec{\pi}_{k+1}$\Comment{Returns the optimal policy}
    \EndProcedure
    \end{algorithmic}
\end{algorithm}

\subsubsection{Sparsity aware model}

Algorithm~\ref{algorithm: ctmdp policy iteration} is attractive if it can be used with a commercial linear algebra package that has a good solver for a system of linear equations. The non-preemptive CTMDP has a remarkably sparse transition model with each row containing at most four entries. Some linear algebra software has additional routines for dealing with sparsity. However, computational efficiency can be obtained by using \emph{value iteration} over the computational graph of the MDP. The computational graph relies on \emph{state-action values} $Q(x_k,a_k)$ in conjunction with \emph{state value}s $J(x_k)$ which are related as below
\begin{eqnarray}
    J(x_k) & =& \operatorname{argmin}_{a_k \in \mathcal{A}(x_k)} \left\{ Q(x_k,a_k) \right\} \label{eq: J from Q}\\
    Q(x_k,a_k) & = & C(x_k) + \alpha(x_k,a_k) \sum_{x_{k+1} \in \mathscr{N}(x_k)}  \mathscr{P}(x_{k+1} \mid x_k, a_k) J(x_{k+1}) \label{eq: Q from J}
\end{eqnarray}
where $\mathcal{N}(x_k)$ is the set of all states that can be reached from $x_k$ and is called the \emph{neighbours set}. This is analogous to all states in a row of the transition model that have non-zero entries. This relationship is depicted in figure~\ref{fig: mdp state-value graph} as a state-value computational graph. It reflects the fact that $J(x_k)$ can be considered as a node (state node) that contains $Q(x_k,a_k)$ as embedded nodes (state-action nodes) which it derives its value from. Furthermore, the greedy one-step look-ahead optimal action (from policy improvement in algorithms \ref{algorithm: smdp policy iteration} and \ref{algorithm: ctmdp policy iteration}) can be obtained through consulting the embedded nodes of the state node once all embedded nodes have received a \emph{Bellman update} (\ref{eq: Q from J}). After this update, the state node should also have its value updated as to reflect the change. Model components such as the holding cost, discount factor, transition model and neighbours set can be stored on each state-action node as attributes. Only the transition probabilities that correspond to states in the neighbours set need be stored.

The central feature of the sparsity-exploiting approach is that all state-action nodes are accessible by looping through a global object that stores \emph{pointers} or \emph{memory locations} of these nodes $\mathbb{Q} = \{Q(x_k,a_k)\forall x_k,a_k \in \mathcal{X}\times \mathcal{A}  \}$. Upon accessing the object, a Bellman update can be performed which involves a summation loop over at most four neighbours. Choosing when to perform the update of the state node (\ref{eq: J from Q}) determines whether \emph{asynchronous}\footnote{This is also referred to as \emph{Gauss-Siedel} value iteration as in chapter 5.6 of \cite{gosavi2015simulation}} or \emph{synchronous} value iteration is being performed. By updating $J(x_k)$ once all its embedded nodes have received its Bellman update results in the asynchronous variant. It would be wise to order state-action nodes in $\mathbb{Q}$ such that nodes belonging to the same state are contiguous. This can be performed by a function \texttt{ContiguousArrange}\footnote{The loop can have it performance enhanced if the entries of $\mathbb{Q}$ are contiguously stored in computer memory.}. The synchronous variant would loop through $\mathbb{J} = \{J(x_k): x_k \in \mathcal{X}  \}$ and perform (\ref{eq: J from Q}) after the loop though $\mathbb{Q}$ has completed. Both approaches allow for the state-values to converge to their optima in the limit (\ref{eq:converge}), however, the asynchronous variant has generally been observed to do this at a faster rate (chapter 5.6 of \cite{gosavi2015simulation}).

\begin{figure}[!htbp]
    \centering
    \includegraphics[width=0.3\textwidth]{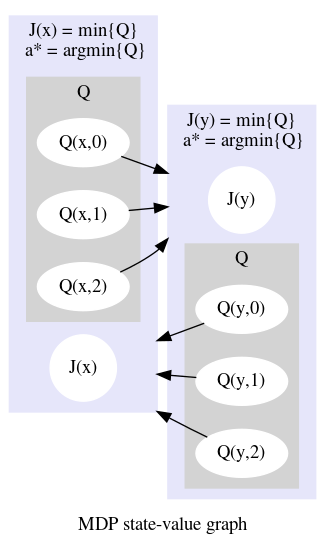}
    \caption{MDP state-value computational graph for value iteration.}
    \label{fig: mdp state-value graph}
\end{figure}

\begin{algorithm}[!htbp]
    \caption{CTMDP sparsity-aware asynchronous Value Iteration}
    \label{algorithm: ctmdp value iteration}
    \begin{algorithmic}[1]
    \Procedure{ValueIteration}{$\mathbb{Q}$,$\mathbb{J}$,$\epsilon$,\texttt{maxiter}}
    \State $\mathbb{Q} \leftarrow \mbox{\texttt{ContiguousArrange}}(\mathbb{Q})$ \Comment{Important for the asynchronous update}
    \For{$J(x) \in \mathbb{J}$}
    \State $J(x) \leftarrow 0$ \Comment{Most simple arbitrary initial value}
    \EndFor
    \State $\vec{J}_0 \leftarrow \mbox{\texttt{set2vector}}(\mathbb{J})$
    \State $\Delta_0 \leftarrow \infty$ \Comment{Largest change in value functions}
    \State $k \leftarrow 0$
    \While {$\Delta_k > \epsilon$}
        \State $X \leftarrow $ None  \Comment{Current state of $Q$-values}
        \State $n \leftarrow $ None \Comment{Number of $Q$-values left}
        \For {$Q(x,a) \in \mathbb{Q}$}
            \If{$x \neq X$}
                \State $X \leftarrow x$
                \State $n \leftarrow |\mathcal{A}(x)|$
            \EndIf
            \State Perform Bellman update (\ref{eq: Q from J}).
            \State $n \leftarrow n - 1$
            \If{$n = 0$}
                \State Update $J(x_k)$ using (\ref{eq: J from Q}). \Comment{Asynchronous update}
            \EndIf
        \EndFor
        \State $\vec{J}_{k+1} \leftarrow \mbox{\texttt{set2vector}}(\mathbb{J})$
        \State $\Delta_k = \max\left\{ \left\vert \vec{J}_{k+1} \ominus \vec{J}_{k} \right\vert \right\}$ \Comment{$\ominus$ is element-wise subtraction}
    \EndWhile
    \State \Return $\vec{\pi}^* \leftarrow \mbox{\texttt{OptimalActions}}(\mathbb{J})$ \Comment{Returns the optimal policy}
    \EndProcedure
    \end{algorithmic}
\end{algorithm}

In algorithm~\ref{algorithm: ctmdp value iteration} two helper functions have been used: \texttt{set2vector} and \texttt{OptimalActions}. Both functions return a $|\mathcal{X}|$-length vector by looping through $J(x) \in \mathbb{J}$ and filling the vector at index $i = \mbox{\texttt{quad2idx}}(x)$. The first function places $J(x)$ at this index while the second consults its embedded nodes such that the optimal action $a^* = \operatorname{argmin}_{a \in \mathcal{A}(x)}\{Q(x,a)\}$ is inserted.

\section{Discussion}\label{section: discussion}

\subsection{Homogeneous and non-homogeneous arrival rates}\label{section: homogeneous arrival rates}

In building the components for the SMDP model of section~\ref{section: SMDP}, a very general approach was taken. Although not explicitly stated, the bi-variate generator matrix modelling arrivals in equation~\ref{eq: bi-variate generator matrix} can be used along with non-homogeneous arrival rates. More specifically, the arrival rates can depend on its position in the generator matrix such that (\ref{eq: bi-variate generator matrix}) can be substituted for the following
\begin{equation}\label{eq: non-homogeneous bi-variate generator matrix}
    \mathbf{Q}_{i,j} = 
    \begin{cases}
    \lambda_1(n_{\lambda_1},n_{\lambda_2}), & i = \mbox{\texttt{coord2idx}}(n_{\lambda_1},n_{\lambda_2}), j = \mbox{\texttt{coord2idx}}(n_{\lambda_1}+1,n_{\lambda_2})\\
    \lambda_2(n_{\lambda_1},n_{\lambda_2}), & i = \mbox{\texttt{coord2idx}}(n_{\lambda_1},n_{\lambda_2}), j = \mbox{\texttt{coord2idx}}(n_{\lambda_1},n_{\lambda_2}+1)\\
    -\gamma(n_{\lambda_1},n_{\lambda_2}), & i = \mbox{\texttt{coord2idx}}(n_{\lambda_1},n_{\lambda_2}), j = \mbox{\texttt{coord2idx}}(n_{\lambda_1},n_{\lambda_2}) \\
    0, & \mbox{else}
    \end{cases}
\end{equation}
where $\gamma(n_{\lambda_1},n_{\lambda_2}) = \lambda_1(n_{\lambda_1},n_{\lambda_2}) + \lambda_2(n_{\lambda_1},n_{\lambda_2})$. Such a modification presents no change to the existing framework of section~\ref{section: SMDP} due to the general approach taken. However, such generality comes at the cost of forgoing model simplification that allows for faster computation.

If the arrival rates are homogeneous then the very expensive computation for holding costs due to arrivals (\ref{eq: arrival cost}) can be simplified. This is because the total holding cost up to time $t$ due to arrivals is a linear function of time
\begin{equation}
    c_{\mathcal{I}}(t) = (c_1 \lambda_1 + c_2 \lambda_2) \times t
\end{equation}
such that (\ref{eq: arrival cost summed}) can be replaced with
\begin{eqnarray}
    C_{\mathcal{I}}^e &=& \int_{0}^{\infty} c_{\mathcal{I}}(t) f_e(t) \, dt \\
    & = & (c_1 \lambda_1 + c_2 \lambda_2) \times \int_{0}^{\infty}t f_e(t) \, dt \\
    & = & (c_1 \lambda_1 + c_2 \lambda_2) \times \mathbbm{E}\left[ t_e \right] \label{eq: homogeneous arrival costs}
\end{eqnarray}
where $\mathbbm{E}\left[ t_e \right]$ is the expected duration of the event. This is fast to compute and does not suffer from any truncation error introduced by the finite size generator matrix. The use of non-homogeneous arrival rates can be useful though due to its additional modelling power. As an example, one might consider the polling model to be an online server attending to two types of queued customer requests (computational jobs). When a server processes queue $i$, customers gradually become aware of this and submit jobs to queue $i$ instead of $j\neq i$ in hope that the selected queue will be served exhaustively. Hence, arrival rates increase which can be modelled through using the non-homogeneous approach.

\subsection{SMDP vs CTMDP: similarities and differences}\label{section: SMDP vs CTMDP differences}

Before presenting and comparing the SMDP and CTMDP policies as well as gauging their relative performances as in section~\ref{section: experiments}, it is worthwhile to discuss some similarities and differences in the components of the models. This will help understand some of the outcomes. The objective function used by both MDPs is the long-run expected discounted holding cost. This is an additive function which allows for the \emph{principle of optimality} to apply as to permit a dynamic programming solution \cite{BertsekasVol1,BertsekasVol2}. This additive function is constructed from the expected discounted cost incurred between decision epochs. When the arrivals rates during such an interval is constant/homogeneous then only the expected length of this interval is important. For the SMDP this is clear from (\ref{eq: homogeneous arrival costs}) used in conjunction with (\ref{eq: lump sum cost final}). For the CTMDP this is clear from (\ref{eq: non-preemptive ctmdp holding costs}). However, (\ref{eq: homogeneous arrival costs}) can be used for \emph{both} the non-preemptive CTMDP and SMDP models as the entire framework of section~\ref{section: SMDP} can build a non-preemptive CTMDP model by specifying $\forall e \in \mathcal{E}: t_e \sim Exp$. This means that the shape/spread of the event-duration distribution over $\mathbb{R}_{\geq 0}$ does not play a role in determining the outcome of the cost. Subsequently, attempting to introduce risk through assigning long-tailed distributions to $t_e$ will not affect the cost per stage in a SMDP and allow it to differ from the CTMDP in this aspect.

The shape/spread of the event-duration distribution does, however, affect the transition model. This is where the SMDP can differ from the CTMDP. Both models will have the same transition model for idling events. However, the SMDP model can be more (or less) pessimistic with regards to arrivals received during a service or switch-over event. This is important as it assigns a larger probability of transitioning to a less favourable future state as compared to the CTMDP. Hence, long-tailed distributions can implicitly model some form of risk adverse behaviour into the SMDP. 

Risk is usually modelled explicitly in the objective function of an MDP (see section 3.1 of \cite{fu_risk_MDP_tut}) through various quantities. Variance-penalised MDPs \cite{gosavi_variance_MDP} have been popular as well as the use of downside-risk \cite{gosavi_downside_risk}. Downside-risk is useful as it distinguishes between "good" and "bad" spread of the cost distribution. Downside-risk, as employed in modelling rewards, characterises "good" spread as that below the mean of the distribution. Variance-penalised risk fails to distinguish between good and bad variance. For a model that relies on costs, upside-risk naturally stems from downside-risk. Long-tailed distributions could be seen to implicitly model such upside-risk in the transition functions.

\section{Empirical Experiments}\label{section: experiments}

The main concern of this section is to test the hypothesis that a Semi-Markov decision process provides an advantage over existing policies when controlling a polling model with switch-over durations. This is a two-part process:
\begin{enumerate}
    \item If for another policy $\pi$ one finds that $\forall x = (n_1,n_2,l_1) \in \mathcal{X}: \pi_{SMDP}(x) = \pi(x)$ then they are identical and the Semi-Markov Decision Process cannot possibly provide an advantage. In this scenario, the SMDP policy is likely to be the more computationally burdensome policy to compute.
    \item If the two policies differ, the expected performance advantage should be better by a margin that is statistically significant.
\end{enumerate}

This section considers various parameter and distribution combinations for the two-queue polling models and for each seeks to reject the hypothesis that the Semi-Markov Decision Process policy is the same. Hence, this section does not \emph{prove} any results nor does it seek to \emph{accept} a hypothesis.

\subsection{Method}

To empirically assess the Semi-Markov Decision policies the following is required:
\begin{enumerate}
    \item Polling models with stable dynamics. Attempting to control an unstable system is pointless.
    \item Polling models with defining characteristics. Certain policies may perform better on polling models with certain attributes.
    \item Other policies polling model policies. The Continuous-time Markov Decision process has already been discussed in section~\ref{section: CTMDP}. It has been reported to provide the best performing policy on the long-run discounted holding cost \cite{duenyas1996heuristic}. Computationally inexpensive heuristic policies designed for polling models with switch-over costs will also be considered.
    \item Means of performing reliable, consistent and comparable simulations of a polling model under a given control policy. 
    \item The infinite horizon discounted holding costs must be obtained from finite length simulations.
    \item A test must be performed to assess whether the infinite horizon discounted holding costs are different in a way that is statistically significant i.e. not attributed to randomness.
\end{enumerate}

The following subsections discuss how these requirements are met.

\subsubsection{Generating polling model scenarios}\label{method: polling model scenarios}

A \emph{necessary} condition to ensure that a two queue polling model has stable dynamics is given below.

\begin{definition}[\textbf{Necessary stability criteria} \cite{duenyas1996heuristic}] \label{def: statbility criteria}
A two-queue polling model with non-zero switch-over durations can have stable dynamics if
\begin{eqnarray}
    \rho_1 + \rho_2 & < &  1 \label{eq: necessary stability}
\end{eqnarray}
where the utilisation is computed as
\begin{eqnarray}
    \rho_i & = & \frac{\lambda_i}{\mu_i} \label{eq: rho_i} \\
    & = & \frac{\lambda_i}{\int_{0}^\infty t\, f_{\mu_i}(t) \, dt}\\
    & = & \lambda_i \bar{t}_{\mu_i}
\end{eqnarray}
and $\rho = \rho_1 + \rho_2$ for convenience.
\end{definition}
Essentially, condition~(\ref{eq: necessary stability}) asks that the server needs to be performing work for only a $\rho$ proportion of time. In other words, a server that must always be busy serving jobs in order to remain stable will never be able to switch such that at least one of the queues will grow without bound. For example, if $\rho=1$\footnote{A system with no switch-over can still be stable if $\rho=1$. This is because the server can always be busy as switches are instantaneous.} then a server can remain at one queue as to ensure it remains stable while the other neglected queue grows. Alternatively, the server can perform switches that it cannot afford. In this case, it arrives to a queue that was on average longer than its last arrival to it.

The condition is not sufficient as a policy can still make the system unstable. This can happen if the one queue has a much larger holding costs rate than the other $c_i \ggg c_j$ which leads to a policy that is optimal to serve queue $i$ and idle at it until its next arrival hence never switching to queue $j$\footnote{This can be explained using rewards rates instead of costs as in \cite{duenyas1996heuristic}. Policies often try to maximise a reward rate that is equivalent to minimising a cost function; an idea presented in \cite{harrison1975priority}. Hence, the server idles at a queue $i$ if a very high reward is obtained from serving another of its customers.}. A high discount rate with less server cost rate difference can lead to the same scenario as the policy becomes severely short-sighted/myopic. Nonetheless, condition~(\ref{eq: necessary stability}) is a good starting point to test whether a proposed polling model is stable \emph{before} controlling it.

The preceding paragraph discusses a potential role that the cost rate plays in influencing what a policy might do to the polling system. Before allocating a large chunk of computational resources to obtain a Semi-Markov Decision Process policy, it would be desirable to have an idea of what it might look like. Such a ballpark sketch of the policy will avoid computing a SMDP policy that turns out to be optimal but induces instability. To do this, the theory of chapter 5 from \cite{van_eekelen} is used. The idea is to assess the optimal limit-cycle of a polling system modelled as deterministic fluid queues. The optimal limit-cycle was first introduced in figure~\ref{fig:bow tie curves} where two variants were presented. This section discusses how to determine which type of curve is to be expected, how to compute its corner coordinates and to interpret stability from it.

The pure bow-tie curve differs from the truncated bow-tie curve in that the priority queue has a slow-mode i.e. the optimal trajectory has the server idle at the priority queue. Without loss of generality, queue 1 will be assigned as the priority queue which requires that 
\begin{equation}
    c_1 \lambda_1 > c_2 \lambda_2. \label{eq: priority}
\end{equation}
In the event that $c_1 \lambda_1 = c_2 \lambda_2$ then system is symmetric, no priority queue exists and pure bow-tie curve is adopted. For the system with a priority queue, a truncated bow-tie curve only occurs if the \emph{slow-mode condition} is met such that
\begin{equation}
    c_1 \lambda_1 \rho - \left( c_1\lambda_1 - c_2 \lambda_2 \right)\left( 1-\rho_2 \right) < 0 \label{eq: slow-mode condition}
\end{equation}
This follows from theorem 5.3 of \cite{van_eekelen}. Clearly, if no priority queue exists then the right-hand term vanishes and the above condition is failed. A closer inspection of figure~\ref{fig:bow tie curves} reveals that the pure curve is a special case of the truncated curve. More specifically, the truncated curve has five corners of which one specifies until where the server idles. The pure curve has this point equal to the preceding coordinate where exhaustive service ends. Hence, this four point curve can be seen as a five-point curve with a degenerate coordinate. This perspective avoids the cumbersome procedure of formulating both curve separately. Start at an exhausted queue 1
\begin{equation}
    \left(0\,,\, \lambda_2\times\left(\bar{t}_{s_{2,1}}+\frac{\rho_1\left(\bar{t}_{s_{1,2}}+\bar{t}_{s_{2,1}}\right)\left(1+\alpha_1 \rho_2\right)}{1-\rho}\right) \right) \tag{C1} \label{coord: c1} 
\end{equation}
idle at queue 1 until
\begin{equation}
    \left(0\,,\, \lambda_2\times\left( \bar{t}_{s_{2,1}}+\frac{ \left(\bar{t}_{s_{1,2}}+\bar{t}_{s_{2,1}}\right)\left( \alpha_1\left(1-\rho_1\right)\left(1-\rho_2\right)+\rho_1 \right)}{1-\rho}\right) \right)  \tag{C2}\label{coord: c2}
\end{equation}
and then switch over to queue 2. The server arrives at queue 2 when the queues are at the following lengths
\begin{equation}
    \left(\lambda_1 \bar{t}_{s_{1,2}}\,,\, \lambda_2\left(\bar{t}_{s_{1,2}}+\bar{t}_{s_{2,1}}\right)\times\left(\frac{ \left(1+ \alpha_1\left(1-\rho_1\right) \right)\left(1-\rho_2\right)}{1-\rho}\right) \right)  .\tag{C3}\label{coord: c3}
\end{equation}
The server then exhausts queue 2 such that
\begin{equation}
    \left(
    \lambda_1 \times \left( \bar{t}_{s_{1,2}} +
    \frac{\rho_2\left(\bar{t}_{s_{1,2}}+\bar{t}_{s_{2,1}}\right)\left(1+ \alpha_1\left(1-\rho_1\right)\right)}
    {1-\rho}
    \right)
    \,,\, 0 \right)  \tag{C4} \label{coord: c4}
\end{equation}
upon which it immediately switches back to queue 1. The arrival to queue 1 is met with the following state of the queues
\begin{equation}
    \left(   
    \lambda_1\left(\bar{t}_{s_{1,2}}+\bar{t}_{s_{2,1}}\right)\times\left(\frac{\left(1+\alpha_1\rho_2\right)\left(1-\rho_2\right)}{1-\rho}\right) 
    \, ,\, \lambda_2 \bar{t}_{s_{2,1}} \right) \tag{C5} \label{coord: c5}
\end{equation}
where exhaustive service proceeds until coordinate (\ref{coord: c1}) is reached and the cycle restarts. The pure bow-tie curve emerges by setting $\alpha_1 = 0$ and ignoring (\ref{coord: c3}). If, condition~(\ref{eq: slow-mode condition}) is passed then $\alpha_1$ is solved through quadratic equations
\begin{eqnarray}
    \alpha_1 & = &  \alpha_1^{(1)}\mathbbm{1}_{\left\{\alpha_1^{1}>0\right\}} + \alpha_1^{(2)}\mathbbm{1}_{\left\{\alpha_1^{2}>0\right\}}\\
    \alpha_1^{(n)} & = & \operatorname{Re}\left\{\frac{-b + (-1)^n \sqrt{b^2 - 4ac}}{2a}\right\}
\end{eqnarray}
where only one of the $\alpha_1^{(n)}$ will be positive. The coefficients are determined as follows:
\begin{eqnarray}
    a & = &  c_1 \lambda_1 \rho_2^2(1-\rho_1) + c_2 \lambda_2 (1-\rho_1)^2(1-\rho_2) \\
    b & = & 2c_1 \lambda_1 \rho_2^2 + 2c_2 \lambda_2 (1-\rho_1)(1-\rho_2) \\
    c & = & c_1 \lambda_1 \rho - (c_1 \lambda_1-c_2 \lambda_2)(1-\rho_2).
\end{eqnarray}
Pure-bow tie curves will allow for stable Semi-Markov Decision Process policies. However, recall from section~\ref{section: tpm truncation} that the transition model needs to be truncated through selecting a maximum considered queue size $\mathcal{X}_i \in \mathbb{N}$ for each queue. Such a choice does not need to be a completely random guess; each $\mathcal{X}_i$ needs to be larger than than its queue's greatest entry from the optimal limit-cycle (\ref{coord: c1})-(\ref{coord: c5}). The value still needs to be sufficiently larger as to avoid the truncation error propagating from the truncated edges over to the useful part of the policy during policy iteration (or value iteration). Hence, the limit-cycle is useful for providing a lower bound to start performing truncation. It may even turn out that the lower bounds suggest a model that the available computational resources cannot handle in which case the polling model should be abandoned. Additional attention needs to be given to the truncated bow-tie curve. It is the experience of this research that a slow-mode of sufficient length will result in a SMDP policy $\pi$ that chooses to never switch away from the priority queue. As such $\mathbf{P}_{\pi}$ is no longer irreducible, subsequently not ergodic and never reaches a stationary distribution as the non-priority queue grows indefinitely. While a stable SMDP policy most likely exists for such a scenario, it could be that a very large $\mathcal{X}_i$ is required to deal with long slow-modes. This may be prohibitively large. Nonetheless, this remains an issue that requires further attention. For now it is best to treat a long-slow mode as a warning sign to be considered before investing in the construction and solving of the costly Semi-Markov Decision Process.

\subsubsection{Other polling model policies}\label{method: other policies}

Two simple policies that require little to no model-specific computation will be considered for performance analysis. The rationale behind this is to determine whether the additional costs of computing exactly optimal MDP policies result in a statistically significant performance enhancement. It is known from \cite{liu1992optimal} that for a symmetric polling model (both queues have the same parameters and event distributions) that exhaustive service is optimal. Furthermore, if the server empties the system at queue $i$ such that $x = (0,0,l_1=i)$ then it should idle and not switch. This \emph{exhaustive policy} outperformed other policies in the simulation experiments of \cite{tava_practical} when conducted using a symmetric polling system (as would be expected). This includes the use of event distributions other than the exponential distribution. Its performance degraded as the system became more asymmetric. In applying it to a symmetric system\footnote{The original paper formulates the optimal policy for $N$ symmetric queues.}, this policy can be viewed as \emph{exactly optimal}. It becomes a \emph{heuristic} policy when applied to asymmetric models.

In \cite{duenyas1996heuristic} a heuristic policy was developed for an asymmetric two-queue\footnote{Later section of the paper extends it to $N$ ranked queues.} polling system with a priority queue. This policy is based in formulating reward-indices for each queue as to maximise a notion of reward that is equivalent to minimising the long-run discounted holding cost \cite{harrison1975priority}. Assuming queue 1 to be the priority queue (\ref{eq: priority}), a case-based policy follows as in algorithm~\ref{algorithm: heuristic policy}. 

\begin{algorithm}[!htbp]
    \caption{Priority queue heuristic}
    \label{algorithm: heuristic policy}
    \begin{algorithmic}[1]
    \Procedure{Heuristic}{$\lambda_1$,$\lambda_2$,$\mu_1$,$\mu_2$,$\bar{t}_{s_{1,2}}$,$\bar{t}_{s_{2,1}}$,$c_1$,$c_2$}
    \State $\rho \leftarrow \lambda_1/\mu_1 + \lambda_2/\mu_2$
    \State assert $\rho < 1$ \Comment{Test for stability.}
    \State assert $c_1 \lambda_1 > c_2 \lambda_2$ \Comment{Insist on having a priority queue.}
    \State $m_2 = $  False \Comment{At least one queue 2 job has been served?}
    \State $l_2 = $ None \Comment{Server action to be returned and executed.}
    \If{$l_1=0$}
        \If{$n_1 >0$}
            \State $l_2\leftarrow 1$ 
        \Else{}
            \If{$n_2 > \lambda_2 \bar{t}_{s_{2,1}}$}
                \State $l_2 \leftarrow 2$
            \Else{}
                \State $l_2 \leftarrow 0$
            \EndIf
        \EndIf
    \Else{}
        \If{$n_2>0$}
            \State $\varphi \leftarrow \left(n_1 + \lambda_1 \bar{t}_{s_{1,2}} \right)/\left(n_1 + \mu_1 \bar{t}_{s_{1,2}} + (\mu_1-\lambda_1)  t_{s_{2,1}}\right) $
            \If{$\varphi \leq c_1\mu_1 \rho + c_2\mu_2(1-\rho)$}
                \State $l_2 \leftarrow 1$
                \State $m_2 \leftarrow$ True
            \Else{}
                \If{$m_2$ is False}
                    \State $l_2 \leftarrow 1$
                    \State $m_2 \leftarrow$ True
                \Else{}
                    \State $l_2 \leftarrow 2$
                    \State $m_2 \leftarrow$ False \Comment{Reset for next time the server visits.}
                \EndIf
            \EndIf
        \Else{}
            \If{$n_1 > \lambda_1 \bar{t}_{s_{1,2}}$ }
                \State $l_2 \leftarrow 2$
                \State $m_2 \leftarrow$ False
            \Else{} 
                \State $l_2 \leftarrow 0$
                \State $m_2 \leftarrow$ False
            \EndIf
        \EndIf
    \EndIf
    \State \Return $l_2$
    \EndProcedure
    \end{algorithmic}
\end{algorithm}

\subsubsection{Simulation}\label{method: simulation}

The most complex simulator used will be a Semi-Markov Process. This simulator will consult the given policy $\pi$ at the $k^{th}$ epoch of its embedded chain $\widetilde{x}_k$ as to select the state of the server $l_2^{k}=\pi(\widetilde{x}_k)$ for the duration of the transition to the next point in the embedded chain $\widetilde{x}_{k+1}$. During this temporally extended action arrivals may occur over the duration $\Delta t_k \sim F_{l_2^{k}}$ such that the simulation time can be updated as $t_{k+1} = t_{k}+\Delta t_k$ where $t_0 = 0$. These arrivals and the existing customers in $\widetilde{x}_k$ will contribute to generating a discounted holding cost $c_k^{\beta}$. Hence, a trajectory $\mathcal{T}(\widetilde{x}_0,\vec{\omega}_i,T,\pi) = \left\{ (\widetilde{x}_0,l_2^0,c_{0}^{\beta},\Delta t_0,t_0=0),\cdots,(\widetilde{x}_k,l_2^k,c_{k}^{\beta},\Delta t_k,t_k\geq T) \right\}$ of prescribed duration $T \in \mathbb{R}_{\geq 0}$ can be generated from an initial point in the embedded chain using a policy and list of random numbers $\vec{\omega}_i$.

The list of random numbers ensures that different policies can be run on the same environments as to compare their performance in a manner that is fair. In other words, this prevents one policy from receiving a bad batch of extreme responses from the environment (i.e. burst of arrivals) while the other policy is spared from such adversity. Perhaps the latter simulation even receives highly favourable outcomes. In order to implement such a list of random numbers, two approaches are proposed. 

The first approach assumes finite computer memory not to be an issue. This approach is simple, intuitive but wasteful. The list of random numbers is used to realise a large list of occurrences/samples for each event type $ \forall e \in \mathcal{E}: \, \vec{e} = \left[ e_{\omega} \sim F_{e}(\cdot | \omega): \omega \in \vec{\omega}_i \right]$. These lists can be stored as to be used for each simulation under a different policy. Instead of the simulation drawing an event sample $e_k$, it consults its correct copy of $\vec{e}$ and pops\footnote{Pop is an operation performed on lists or first-in-first-out queues. It simply removes the first entry of the list and returns it.} a sample $e_k = \vec{e}\mbox{.pop()}$.

The second approach takes finite memory into account and generates samples on demand. Each event type receives a copy of the same random number list. Each time an event is to be sampled a random number is obtained $\omega_k = \vec{\omega}$.pop(). The event is then generated as $e_k \sim F_{e}(\cdot | \omega_k)$. 

In both simulation approaches, the lists must be sufficiently long enough to last the duration of the simulation. A na\"{i}ve approach can use the following guideline as to obtain a ballpark estimate of the length of such a list. Pessimistically, assume that the simulation has a sampling rate $\nu = 1/\sum_{e \in \mathcal{E}} \bar{t}_e$ where $\bar{t}_e = \int_{0}^\infty t f_{e}(t)\, dt$. Then the length of the lists can best set as $L \geq \lceil T/\nu \rceil$. Alternatively, it can be acknowledged that $\vec{\omega}$ already exists on a system that generates \emph{pseudo-random numbers}. With respect to the length of simulations, this list $\vec{\Omega}$ can be regarded as infinitely long\footnote{The list of pseudo-random numbers is usually a circular list such that it never runs out. It is rare that it would be traversed.}. Hence, the second simulation approach can allocate a list of random numbers to each event by providing it with an index $\zeta_0 \in \mathbb{Z}$ instead. This index is often referred to as a \emph{random seed} and is used to access an element from $\Omega$ such that $\omega_k = \Omega_{\zeta_k} = $ after which it is updated as $\zeta_{k+1} = \zeta_{k} + 1$. Using this method of generating common random numbers along with the second simulation approach is most memory efficient as and least wasteful. Such a strategy was adopted in simulation experiments of this research.

A remaining issue is to compute $c_k^{\beta}$. This involves computing the discounted integral of a step-wise increasing function over the interval $t \in [0,\Delta t_k]$. The function is step-wise increasing due to arrivals occurring at $t_i^{\lambda} \in [0,\Delta t_k]$. The following helper function can be used to efficiently compute this integral
\begin{eqnarray}
    \mathcal{Z}(c,t_i^{\lambda},\Delta t_k) & = &  \int_{0}^{\Delta t_k} c e^{-\beta t} \mathbbm{1}_{\{t\geq t_i^{\lambda} \}} \, dt\nonumber\\
    & = & c \int_{t_i^{\lambda}}^{\Delta t_k} e^{-\beta t}\, dt \nonumber\\
    & = & \frac{c}{\beta} \left(  e^{-\beta t_i^{\lambda}} - e^{-\beta \Delta t_k} \right)
\end{eqnarray}
in conjunction with the set of arrivals over the transition of the embedded chain $\Lambda_k = \{ (j_i,t_i^{\lambda}):i = 1,2,3,c\cdots \}$ where $j_i \in \{1,2\}$ is the customer class of the $i^{th}$ arrival. Starting at the embedded chain $\widetilde{x}_k = (n_1^k,n_2^l,l_1^k)$ the desired cost is computed as below
\begin{equation}
    c_k^{\beta} = \mathcal{Z}(c_1 n_1+c_2 n_2,0,\Delta t_0) + \sum_{(j,t^{\lambda}) \in \Lambda_k} \mathcal{Z}(c_j,t^{\lambda},\Delta t_k). \label{eq: step-wise cost}
\end{equation}
The above computation will be referred to as \textsc{StepWiseCost} in algorithm~\ref{algorithm: system-based performance}. This cost is only locally discounted in the sense that discounting has been applied to the holding time of a single transition in the embedded chain. Further global discounting will be applied again when computing the overall cost of the simulated trajectory. Global discounting takes into account where in the trajectory the cost was incurred. This is illustrated below where the discounted holding cost of a simulated trajectory is obtained

\begin{equation}
    \mathcal{J}\left(\mathcal{T}(\widetilde{x}_0,\vec{\omega},T,\pi)\right) = \sum_{(c_{k}^{\beta},t_k) \in \mathcal{T}} e^{-\beta t_k} c_k^{\beta} .  \label{eq: compute rollout}
\end{equation}
In fact, equation~(\ref{eq: compute rollout}) is the empirical analog of the state-value functions (\ref{eq:discounted objective function}) used in Dynamic Programming. Moreover, it commonly referred to as a Monte-Carlo rollout in the Reinforcement Learning literature \cite{suttonRLbook}. What remains is to select an initial point in the embedded chain $\widetilde{x}_0$. This turns out to be quite important in determining what type of performance is being gauged and is discussed in the next section.

%The experiments are interested in obtaining rollouts that pertain to a Semi-Markov Process in the stationary regime. Again, two approaches are suggested in satisfying this constraint. The first is to uniformly select $\widetilde{x}_0 \sim Uni\left(\widetilde{\mathcal{X}}\right)$, obtain $\mathcal{T}(\widetilde{x}_0)$ and then to remove the first $\iota \ll |\mathcal{T}|$ samples such that $\mathcal{T}(\widetilde{x}_0') = \mathcal{T}(\widetilde{x}_0)_{\iota:}$. The discarded sample path ${\mathcal{T}(\widetilde{x}_0)}_{0:\iota-1}$ is often referred to as a \emph{burn-in} period and should be sufficiently large as to allow $\widetilde{x}_0' = \widetilde{x}_{\iota}$ to be regarded as being drawn from the stationary distribution of the embedded chain $\widetilde{x}_{0}' \sim \widetilde{\phi}$.

\subsubsection{Performance analysis}\label{method: performance analysis}

The discounted-cost Bellman equation (see section~\ref{section: discounted bellman equations} and \ref{section: smdp bellman equations}) make use of \emph{state-value functions} $\vec{J}_{\pi}^\beta$ both to \emph{evaluate} and \emph{improve} policies. The state-value functions are thus seen to play a crucial role in the theory of MDPs. Furthermore, for discounted MDPs the state-value functions are easy an intuitive to interpret ---- they are the objective function (\ref{eq:discounted objective function}) starting from a given state. Such simplicity does not carry over to the average-cost MDPs (see chapter 5 of \cite{BertsekasVol2}) which is complicate by gains, biases ans well as the existence of uni-chain and multi-chain policies. While state-value functions are useful for optimisation purposes, performance analysis might be interested in how well the polling system performs overall as opposed to how well it performs from a given state. It is thus necessary to distinguish between \emph{state-based optimality} and \emph{system-based optimality}.

\begin{definition}[\textbf{State-based optimal}]
A system is state-based optimal (in the minimising sense) under policy $\pi^* \in \Pi$ if
\begin{equation}
    \forall \pi \in \Pi\setminus\{\pi^*\}, \, \forall x \in \mathcal{X}: \quad J_{\pi^*}(x) \leq J_{\pi}(x) \label{eq: state-based optimal}
\end{equation}
where strict inequality holds at least once in the state-wise comparison for a unique optima. In vector notation this reads
\begin{equation}
    \forall \pi \in \Pi\setminus\{\pi^*\}: \quad \vec{J}_{\pi^*} \preceq \vec{J}_{\pi} \label{eq: state-based optimal vector}
\end{equation}
where $\preceq$ is an element-wise inequality.
\end{definition}
To formulate a definition for system-based optimality, a scalar is required to measure performance such as the state-value function does for state-based optimality. System-based performance should be understood as the long-run discounted cost incurred when a policy is applied regardless of the state it finds the system in. The system may have been operating under a different policy, the same policy or under some random rule. Either way, a probability distribution is induced over the state-space $\mathscr{P}(\mathcal{X})$. As the state-space is discrete, $\mathscr{P}(\mathcal{X})$ will be a probability mass function represented as a vector $\vec{p}_{\pi}$. System-based performance is obtained from this distribution and the state-value functions
\begin{equation}
    \eta_\pi = \left( \vec{p}_{\pi} \right)^T \vec{J}_{\pi}.
\end{equation}
\begin{definition}[\textbf{System-based optimal}]
System-based optimality is achieved under policy $\pi^* \in \Pi$ if 
\begin{equation}
    \forall \pi \in \Pi \setminus \{ \pi^* \}: \quad \eta_{\pi^*} \leq \eta_\pi
\end{equation}
where strict equality holds for a unique optima.
\end{definition}

It would be hoped for that a policy achieving state-based optimality would guarantee a system to achieve system-based optimality. It turns out that such a result does not always hold with the outcome relying on the choice of $\vec{p}_\pi$. The connection between state-based optimality and system-based optimality for the discounted and average-cost case has been investigated in \cite{solms_performance_system_based} under the uniform distribution $\vec{U}$ and stationary distribution of the given policy $\vec{\phi}_\pi$. When $\vec{\phi}_\pi $ is used, it was shown that when comparing two policies $\pi',\pi \in \Pi$ under the discounted-cost criteria
\begin{equation}
    \vec{J}_{\pi'}^\beta \preceq \vec{J}_\pi^\beta  \implies \eta_{\pi'}^\beta  \leq \eta_\pi^\beta 
\end{equation}
only if $\bar{J}_{\pi'} \leq \bar{J}_{\pi}$ along with both $\mathbf{P}_{\pi'}$ and $\mathbf{P}_{\pi}$ being uni-chain. Here $\bar{J}_{\pi}$ is a scalar called the \emph{gain}. It is obtained using the average-cost Bellman policy evaluation equations (see section 2.1 of \cite{solms_performance_system_based}). Such policy evaluation also produces a $|\mathcal{X}|$-length vector $\vec{h}_\pi$ called the \emph{bias} which reports relative\footnote{Relative is used here as once of the entries in the bias vector is set to zero.} state-based transient performance. Hence, a policy that is better in the state-based discounted sense is only discounted system-based better if its gain is better. As for optimality
\begin{equation}
    \forall \pi \in \Pi \setminus \{ \pi^* \}: \, \vec{J}_{\pi^*}^\beta \preceq \vec{J}_\pi^\beta  \implies \forall \pi \in \Pi \setminus \{ \pi^* \}: \, \eta_{\pi^*}^\beta  \leq \eta_\pi^\beta 
\end{equation}
if an only if $\pi^*$ is \emph{Blackwell optimal} (see section 3.5 of \cite{solms_performance_system_based} and chapter 5.1.2 of \cite{BertsekasVol2}). Furthermore, $\eta_{\pi^*}^\beta$ is a unique optima (strict inequality) if the set of gain-optimal polices consists of a single policy. It can be seen that using $\vec{\phi}_\pi$ requires several technicalities to be tended to. Furthermore, its usefulness is questionable as it assumes the existing policy to have already been in operation and that interest in performance only occurs once it has induced a stationary regime.

Using $\vec{U}$ is devoid of such issues. State-based optimality will always guarantee system-based optimality that holds with strict inequality. It may be obvious to see why this is the case. When $\vec{\phi}_\pi$ was used, system-based optimality depended on both the distribution and state-values functions which \emph{both differ} across policies. Now that system-based performance uses a \emph{common distribution}, the outcome depends only on the state-value functions

\begin{theorem}\label{theorem: system-based optimality}
If a common distribution over states $\vec{p} \in \mathbf{\Delta}^{|\mathcal{X}|-1} $ is used in obtaining system-based performance under discounted costs then
\begin{equation}
    \forall \pi \in \Pi \setminus \{ \pi^* \}: \, \vec{J}_{\pi^*}^\beta \preceq \vec{J}_\pi^\beta  \implies \forall \pi \in \Pi \setminus \{ \pi^* \}: \, \eta_{\pi^*}^\beta  < \eta_\pi^\beta 
\end{equation}
where $\pi^*$ is the optimal discounted cost policy.
\end{theorem}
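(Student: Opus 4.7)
The plan is to exploit the linearity of the system-based performance functional together with the non-negativity of the common distribution vector. By definition $\eta_\pi^\beta = \vec{p}^T \vec{J}_\pi^\beta$, so subtracting one system-based performance from the other yields
\[
\eta_\pi^\beta - \eta_{\pi^*}^\beta \;=\; \vec{p}^T \!\left( \vec{J}_\pi^\beta \ominus \vec{J}_{\pi^*}^\beta \right) \;=\; \sum_{x \in \mathcal{X}} p_x \left( J_\pi^\beta(x) - J_{\pi^*}^\beta(x) \right).
\]
Since $\vec{p}$ lies in the unit simplex $\mathbf{\Delta}^{|\mathcal{X}|-1}$, every $p_x$ is non-negative and they sum to one, while the state-based optimality hypothesis forces every summand $J_\pi^\beta(x) - J_{\pi^*}^\beta(x)$ to be non-negative. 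Term-by-term non-negativity immediately gives the weak inequality $\eta_{\pi^*}^\beta \leq \eta_\pi^\beta$; this is essentially a one-line linear-algebra consequence of $\vec{p}$ and $\vec{J}_\pi^\beta \ominus \vec{J}_{\pi^*}^\beta$ both being element-wise non-negative.

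For the strict part, I would invoke the uniqueness clause in the definition of state-based optimality: because $\pi^*$ is the unique discounted-cost optimum, there exists at least one state $\hat{x} \in \mathcal{X}$ for which $J_{\pi^*}^\beta(\hat{x}) < J_\pi^\beta(\hat{x})$ strictly. Then
\[
\eta_\pi^\beta - \eta_{\pi^*}^\beta \;\geq\; p_{\hat{x}}\left( J_\pi^\beta(\hat{x}) - J_{\pi^*}^\beta(\hat{x}) \right),
\]
so strict inequality follows provided $p_{\hat{x}} > 0$.

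The main obstacle is precisely this last condition. The statement of the theorem permits an arbitrary $\vec{p} \in \mathbf{\Delta}^{|\mathcal{X}|-1}$, but strict inequality can fail if $\vec{p}$ happens to assign zero mass to every state where the state-wise inequality is strict (for instance, a Dirac mass on a tie state). I would therefore either strengthen the hypothesis by requiring $\vec{p}$ to have full support over $\mathcal{X}$, which is exactly the property enjoyed by the uniform distribution $\vec{U}$ referenced in the preceding discussion and the natural intended choice, or, equivalently, restrict the strict conclusion to distributions with $p_{\hat{x}} > 0$ at some strict-gap state. Under either clarification the argument closes cleanly, and the theorem is essentially a corollary of the positivity-preserving property of the inner product $\vec{p}^T(\cdot)$ applied to the difference vector $\vec{J}_\pi^\beta \ominus \vec{J}_{\pi^*}^\beta$.
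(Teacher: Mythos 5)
Your proposal takes essentially the same route as the paper: write $J_{\pi^*}^\beta(x) = J_\pi^\beta(x) - \epsilon(x)$ with $\epsilon(x) \geq 0$, use uniqueness of the discounted optimum to get $\epsilon(x) > 0$ somewhere, and pass the difference through the inner product with $\vec{p}$. The one place you go beyond the paper is your caveat about the support of $\vec{p}$, and you are right to raise it: the theorem as stated allows any $\vec{p} \in \mathbf{\Delta}^{|\mathcal{X}|-1}$, and a $\vec{p}$ concentrated on states where $\epsilon(x) = 0$ breaks the strict inequality. The paper's own proof sidesteps this only by silently dropping the $p(x)$ weights in its final line (it writes $\eta_{\pi^*} - \eta_\pi = -\sum_x \epsilon(x)$ where it should be $-\sum_x p(x)\epsilon(x)$), so your version is actually the more careful one; the hypothesis of full support that you propose is satisfied by the uniform distribution $\vec{U}$, which is the only case the paper goes on to use, so the intended conclusion stands once that assumption is made explicit.
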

\begin{proof}
Assume that when comparing $\pi^*$ with $\pi \in \Pi \setminus \{ \pi^* \}$ that
\begin{equation}
  \forall x \in \mathcal{X}: \quad  J_{\pi^*}(x) = J_{\pi}(x) - \epsilon(x)
\end{equation}
where $\epsilon(x) \geq 0$. As is well known that $\pi^* \in \Pi$ is a unique optimal policy for the infinite-horizon discounted-cost MDP (see chapter 1.2 of \cite{BertsekasVol2}) one can assume $\epsilon(x) > 0$ at least once. The difference between the system-based performance of the two policies is determined as
\begin{eqnarray}
    \eta_{\pi^*} - \eta_{\pi} & = & \sum_{x \in \mathcal{X}} p(x) \left(  J_{\pi^*}(x) - J_{\pi}(x) \right) \\
    & = &  \sum_{x \in \mathcal{X}} p(x) \left(  J_{\pi}(x) - \epsilon(x) - J_{\pi}(x) \right)\\
    & = &  -\sum_{x \in \mathcal{X}} \epsilon(x).
\end{eqnarray}
\end{proof}
If $\mathcal{X}(\pi^*,\pi) \subseteq \mathcal{X}$ is the set of states where strict state-based inequality holds then
\begin{eqnarray}
   \eta_{\pi} -  \eta_{\pi^*}  & = & \sum_{x \in \mathcal{X}(\pi^*,\pi)} \epsilon(x) \\
   & \geq & N\, \epsilon \\
    & > &  0
\end{eqnarray}
where $N = |\mathcal{X}(\pi^*,\pi)| \geq 1$ and $\epsilon = \min_{x \in \mathcal{X}}\{\epsilon(x)  \} > 0$ such that $\eta_{\pi^*} < \eta_\pi$.

Using $\vec{U}$ admits ignorance with regards to what state the system is assumed to be in when the optimal policy becomes applied. This makes sense when no prior knowledge exists of what polling model policy might be in use. If it is known that the polling model has adopted an exhaustive policy which is to be replaced by an MDP policy then $\vec{p}$ should be its stationary distribution. This paper adopts $\vec{U}$ as it is interested in the performance of policies on a system which it has no prior knowledge of.

\subsubsection{Empirical distributions}\label{method: empirical distributions}

The fact that $\eta_\pi$ is a scalar provides convenience over state-based optimality for the following application. Policies might theoretically be better than an alternative. However, this margin of superiority might not be statistically significant. Such significance can be determined through a hypothesis test as presented in section~\ref{section: hypothesis tests}. These tests rely on a comparing the \emph{uni-variate} empirical distributions $\hat{f}_\pi$ between to two different policies at to either reject or maintain the null hypothesis that their means or distributions are equal. State-based optimality would require either $|\mathcal{X}|$ such tests or a more complicated multi-variate significance test. Furthermore, each state would require a sufficient amount of sampled trajectories as to construct an empirical distribution $\hat{J}_\pi(x)$. System-based optimality is not faced with these issues as it permits a single uni-variate hypothesis test that requires only two empirical distributions $\hat{\eta}_{\pi}$ and $\hat{\eta}_{\pi'}$. Constructing $\hat{\eta}_{\pi}$ is simple as the necessary tools have already been described. This procedure is outlined in algorithm 
\begin{algorithm}[!htbp]
    \caption{Performance sampling}
    \label{algorithm: system-based performance}
    \begin{algorithmic}[1]
    \Procedure{Rollout}{$\theta$,$\pi$,$\vec{p}$,$\zeta_0$,$c_1$,$c_2$,$\beta$,$T$}
    \State $k \leftarrow 0$ \Comment{Event counter}
    \State $t_k\leftarrow 0$ \Comment{Global time}
    \State $\omega_{\lambda_1},\omega_{\lambda_2},\omega_{\mu},\omega_{\mu_2},\omega_{s_{1,2}},\omega_{s_{2,1}} \leftarrow \Omega_{\zeta_0}$ \Comment{Initialise random seed for all events}
    \State $\vec{\omega}_0 \leftarrow [\omega_{\lambda_1},\omega_{\lambda_2},\omega_{\mu},\omega_{\mu_2},\omega_{s_{1,2}},\omega_{s_{2,1}} ]$ \Comment{Store in a list/look-up table for updating}
    \State $x_0 \sim Cat(\vec{p},\Omega_{\zeta_0})$ \Comment{Categorical distribution}
    \State $\mathcal{J} \leftarrow 0$ \Comment{Simulated rollout (\ref{eq: compute rollout})}
    \While{$t_k < T$}
        \State $a+k \leftarrow \pi(x_k)$ \Comment{Consult the policy}
        \State $x_{k+1},\Delta_0,\Lambda_k,\vec{\omega}_{k+1} \sim \theta(x_k,a_k,\vec{\omega}_k)$ \Comment{Draw an event from the simulator}
        \State $c_k^\beta \leftarrow \mbox{\textsc{StepWiseCost}}(x_k,\Lambda_k,\Delta_k,t_k,c_1,c_2,\beta)$ \Comment{Use equation (\ref{eq: step-wise cost}) }
        \State $\mathcal{J} \leftarrow \mathcal{J} + e^{-\beta t_k}c_k^\beta $ 
        \State $k \leftarrow k + 1$
        \State $t_{k+1} \leftarrow t_k + \Delta t_k$
    \EndWhile
    \State \Return $\mathcal{J}$
    \EndProcedure
    \Procedure{SamplePerformance}{$\theta$,$\pi$,$\vec{p}$,$\zeta_0$,$c_1$,$c_2$,$\beta$,$T$,$N$}
    \State $\hat{\eta} \leftarrow \textsc{Array}(N)$ \Comment{Once-off memory allocation}
    \State $k\leftarrow 0$
    \While{$k < N$}
        \State $\hat{\eta}[k] = \mbox{\textsc{Rollout}}(\theta,\pi,\vec{p},\zeta_k,c_1,c_2,\beta,T)$
        \State $k \leftarrow k + 1$
        \State $\zeta_{k+1} \leftarrow \zeta_k + 1$ \Comment{Update random seed}
    \EndWhile
    \State \Return \textsc{RandomShuffle}$(\hat{\eta})$ \Comment{Prevent pair-wise correlation between different $\hat{\eta}_\pi$}
    \EndProcedure
    \end{algorithmic}
\end{algorithm}

The samples in the empirical distribution are randomly shuffled to prevent pair-wise correlation between two sampled distributions $\hat{\eta}_\pi$ and $\hat{\eta}_{\pi'}$. Such correlation exists because $\hat{\eta}_\pi[k]$ and $\hat{\eta}_{\pi'}[k]$ use the same random seed $\zeta_k$ and hence the same random numbers. The difference distribution $\Delta \hat{\eta} = \hat{\eta}_\pi \ominus \hat{\eta}_{\pi'}$ will be of importance when performing a single-population $t$-test when it attempts to reject the null hypothesis that its mean is zero. Significant correlation will severely underestimate the variance of this distribution such that $\Delta \hat{\eta} $ does not approximate the desired convolution that results from the difference between two random variables.

\subsection{Hypothesis tests}\label{section: hypothesis tests}

To interpret/investigate data an \emph{assumption} is usually made and \emph{tested}. Such an assumption is referred to as the \emph{null hypothesis} $H_0$ and can be rejected in favour of an \emph{alternative hypothesis} $H_A$ (another logical assumption). A statistical test can only reject the null hypothesis --- it cannot be accepted nor is the alternative hypothesis accepted upon its rejection. Hence the only outcomes are \emph{rejection} or \emph{failure to reject}. A statistical test derives a statistic which is used to report a $p$-value. This $p$-value is often acquired through querying a dedicated table with the relevant statistic. A $p$-value is defined as the probability of obtaining a datum at least as extreme as that found in the observed sample set \cite{nonparametric_tests_step_by_step,rice_stats}. It is used in conjunction with a significance level $\zeta \in (0,1)$ which is usually set to $\zeta=0.05$. If $p\leq \zeta$ then a result is said to be \emph{significant} such that the null hypothesis is rejected. Otherwise, it is not significant and the test fails to reject the null hypothesis.

\subsubsection{Student's t-test}\label{section: t test}
The \emph{one-sample} Student's $t$-test is a \emph{parametric} statistical test that assesses the null hypothesis that the \emph{sample mean} $\bar{X}$ is equal to some hypothesized \emph{population mean} $\mu$ \cite{rice_stats}. Mathematically, this can be expressed as $H_0: \bar{X} = \mu$. The $t$-statistic is computed as
\begin{equation}
    t = \frac{\left(\bar{X}-\mu\right)\sqrt{N}}{\hat{\sigma}}
\end{equation}
where $N = |X|$ is the sample size and $\hat{\sigma}$ is the sample standard deviation which uses $N-1$ as a denominator
\begin{equation}
    \hat{\sigma} = \left(\frac{\sum_{x\in X} (x-\bar{X})^2}{N-1}  \right)^{\frac{1}{2}}. \label{eq: sample std}
\end{equation}
As this is a parametric test, the $p$-value can be obtained from the Student's $t$-distribution $F_t$ directly where $F_t$ is a cumulative distribution function with $N-1$ degrees of freedom. If $|t|$ denotes the absolute value then a test against the \emph{two-sided} alternative hypothesis $H_A: \bar{X} \neq \mu$ has a $p$-value determined as $p_{\neq} = 2(1-F_t(|t|))$. The one-sided alternatives follows as $p_{<} = F_t(t)$ for $H_A: \bar{X} < \mu$ and $p_{>} = 1 - F_t(t)$ for $H_A: \bar{X} > \mu$. This test traditionally assumes that $X\sim \mathcal{N}$ is normally distributed \cite{rice_stats}. In practice, this assumption is often violated to some minor degree especially if the data-sets are considered large $N\geq100$ \cite{normality_assumption}.

\subsubsection{Welch's t-test}\label{section: welch}

The Welch's $t$-test is a parametric two-sample statistical test that assesses the null hypothesis of whether two samples have equal means $H_0: \bar{X} = \bar{Y}$ under the assumption that $\operatorname{var}(X)\neq \operatorname{var}(Y)$ \cite{welchs_t_test}. This assumption distinguishes it from the two-sample unpaired $t$-test. It requires a $t$-statistic
\begin{equation}
    t = \frac{\bar{X}-\bar{Y}}{\sqrt{s_X^2 + s_Y^2}}
\end{equation}
where $s^2$ is the \emph{sample standard-error} that can be obtained form the sample's standard deviation~(\ref{eq: sample std})
\begin{equation}
    s^2 = \frac{\sigma}{\sqrt{N}}.
\end{equation}
The $p$-values are obtained from a $t$-distribution $F_t$ with $\xi$ degrees of freedom. If $\xi_X = N_X - 1 = |X| - 1$ and $\xi_Y = N_Y - 1 = |Y| - 1$ then 
\begin{equation}
    \xi = \frac{\left( \frac{\sigma_X^2}{N_X} +\frac{\sigma_Y^2}{N_Y}  \right)^2}{\frac{\sigma_X^4}{N_X^2 \xi_X} +\frac{\sigma_Y^4}{N_Y^2 \xi_Y}}
\end{equation}
Using $F_t$ and $t$, one can obtain $p_{\neq}$, $p_{<}$ and $p_{>}$ as in the previous section. This test assumes that both $X$ and $Y$ are normally distributed and that in $X$ and $Y$ are unpaired. Despite the normality assumption, it has been reported to be robust against skewed distributions if the sample sizes are large \cite{robust_t}.

\subsubsection{Mann-Whitney U-test}\label{section:mann-whitney}
The Mann-Whitney $U$-test is a \emph{non-parametric}\footnote{This means that it sis not based in a parameterised probability distribution. Hence, tables play an important role in obtaining the $p$-values.} statistical test that assesses the null hypothesis that two samples $X\sim F_X$ and $Y\sim F^Y$ have the \emph{same distribution} where $F$ is a cumulative distribution function \cite{nonparametric_tests_step_by_step,rice_stats}. This can be mathematically expressed as $H_0: P(x\in X > y \in Y) = 1/2 =P(x\in X < y \in Y)$. This is the same as claiming the two distributions to be \emph{stochastically equal} $X \stackrel{s.t.}{=} Y$ such that $\forall z \in Z: F_X(z) = F_Y(z)$ where $Z$ is the support $X\cup Y \subseteq Z$. The $U$-statistic is obtained as
\begin{equation}
    \mathcal{U}(X,Y) = \sum_{x \in X}\sum_{y\in Y} S(x,y)
\end{equation}
where
\begin{equation}
    S(x,y) = \begin{cases}
    1, & x > y\\
    \frac{1}{2} , & x = y \\
    0, & x < y
    \end{cases}
\end{equation}
such that the final statistic is $U = \min\left\{\mathcal{U}(X,Y),\mathcal{U}(Y,X)\right\}$.

\subsubsection{Normality test}\label{section:normality test}
This paper uses the \emph{D'Agostino's} $k^2$ \emph{test} to assess the null hypothesis that a sample is normally distributed $H_0: X \sim \mathcal{N}$ \cite{norm_test}. The statistic of interest is $k^2 \sim \chi^2(\xi=2)$ such that it is a parametric test. This statistic is based on transforms of the \emph{skewness} $g_1$ and \emph{kurtosis} $g_2$ of the sample such that 
\begin{equation}
    k^2 = Z_1(g_1)^2 + Z_2(g_2)^2.
\end{equation}
If the $i^{th}$ \emph{central sample moment} is defined to be 
\begin{equation}
    m_i =\frac{1}{|X|} \sum_{x\in X}(x-\bar{X})^i
\end{equation}
then the skewness is determined as
\begin{equation}
    g_1 = \frac{m_3}{\left(m_2\right)^{\frac{3}{2}}}
\end{equation}
while kurtosis similarly follows
\begin{equation}
    g_2 = \frac{m_4}{\left(m_2\right)^2}.
\end{equation}
Sample skewness and kurtosis have distributions which are asymptotically normal such that central moments for these distributions $m_i(g_k)$ were derived in \cite{pearson1931_normality_test}. These expressions are all defined in terms of the sample size $N=|X|$. Although as asymptotically normal, transformations $Z_1$ and $Z_2$ are used to make the distributions as close to a standard normal as possible. Different transforms exist but the most widely used are
\begin{equation}
    Z_1(g_1) = \frac{1}{\ln{(W)}}\sinh^{-1}{\left(g_1\sqrt{\frac{W^2-1}{2 \,m_2(g_1)}}\right)}
\end{equation}
where
\begin{eqnarray}
     m_2(g_1) & = & \frac{6(N-2)}{(N+1)(N+2)}\\
     W & = & \sqrt{2\,G_2(g_1)+4} -1\\
     G_2(g_1) & = & \frac{36(N-7)(N^2+2N-5)}{(N-2)(N+5)(N+7)(N+9)}
\end{eqnarray}
such that $m_2(g_1)$ and $G_2(g_1)$ are the variance\footnote{This is because the mean is zero $m_1(g_1)=0$.} and kurtosis of $g_1$, respectively. The second transform follows as
\begin{eqnarray}
    Z_2(g_2) = \sqrt{\frac{9A}{2}}\left( 1 - \frac{2}{9A}  - \sqrt[3]{\frac{1 -\frac{2}{A}}{1 + \frac{g_2 - m_1(g_2)}{\sqrt{m_2(g_2)}}\times \sqrt{\frac{2}{A-4}}}   }  \right)
\end{eqnarray}
where 
\begin{eqnarray}
    m_1(g_2) & = & -\frac{6}{N+1} \\
    m_2(g_2) & = & \frac{24N(N-2)(N-3)}{(N+1)^2(N+3)(N+5)}\\
    A & = & 6 + \frac{8}{G_1(g_2)}\left(\frac{2}{G_1(g_2)} + \sqrt{1 + \frac{4}{(G_1(g_2))^2}}   \right)\\
    G_1(g_2)  & = &  \frac{6(N^2-5N+2)}{(N+7)(N+9)}\sqrt{ \frac{6(N+3)(N+5)}{N(N-2)(N-3)}  }
\end{eqnarray}
such that $G_1(g_2)$ is the skewness of $g_2$. If $F$ is the cumulative distribution function of $\chi^2(\xi=2)$ then $p=1 -F(k^2)$.

\subsection{Results}

\subsubsection{Polling model parameters}\label{results: model params}

Various polling model scenarios were investigated using the method outlined in section~\ref{method: polling model scenarios} as to produce stable systems. This paper presents the results of two such experiments as can be found in table~\ref{tab: polling scenarios}. The first scenario is a symmetric polling system \emph{except} for the fact that service durations in queue 1 have a much higher variance and longer-tails than those of queue 2. Furthermore, switching from queue 2 over to queue 1 is also the more variable/unreliable and long-tailed switching process. This should make queue 1 and queue 2 identical in expected performance but different in terms of reliability with queue 1 being the unreliable option. A CTMDP would not be able to account for this as discussed in section~\ref{section: SMDP vs CTMDP differences}. Such difference in the spread of the event-duration distributions would not affect the expected holding cost per stage in a SMDP. However, it is picked up in its transition model which leads to the possibility of a SMDP policy differing from the CTMDP counterpart. It should be noted that if two such policies differ then it illustrates that the SMDP has managed to \emph{implicitly} model some form of risk. The use of long-tails and higher variance suggest this to either be upside-risk, variance-penalised risk or a combination of both.

The second scenario allows for the heuristic policy from algorithm~\ref{algorithm: heuristic policy} to be included in the experiments. This policy is computationally cheap to produce. As such, it is worth assessing whether the more expensive MDP policies are significantly better. By (\ref{eq: slow-mode condition}), the slow mode occurs at queue 1 which in this case is the priority queue.

\begin{table}[!htbp]
    \centering
    \begin{tabular}{llllllllll}
    \toprule
        \textbf{Scenario} & $\lambda_1$ & $\lambda_2$ & $t_{\mu_1}$ & $t_{\mu_2}$ & $t_{s_{1,2}}$ & $t_{s_{2,1}}$ & $c_1$ &  $c_2$ & $\rho$ \\
        \midrule
        Asym. var. & 0.8 & 0.8 & Gam(1,0.4) & Gam(30,4/30) & Gam(30,4/30) & Gam(1,0.4) & 1 & 1 & 0.64\\
        Slow mode & 1.5 & 0.4 & Gam(30,0.1/30) & Gam(20,0.5/20) & Gam(30,2/30) & Gam(20,3/20) & 2 & 1  & 0.35 \\
        \bottomrule
    \end{tabular}
    \caption{Polling model scenarios with common parameters: $\beta = 0.05$, $X_1=40$, $X_2=40$, $N_1=35$, $N_2=35$, $K_{1,2} = 0$ and $K_{2,1} =0$. Gam($k$,$\theta$) represents a gamma distribution with shape $k$, scale $\theta$, mean $k\theta$ and variance $k \theta^2$.}
    \label{tab: polling scenarios}
\end{table}

\subsubsection{MDP policies}\label{results: policies}

In this section, the MDP policies are graphically assessed to see whether they differ. If no differences between the SMDP and CTMDP policies are found at this point in the experiment then a conclusion can be made that additional model complexity yields no gain. For the two scenarios considered this was not the case. The policies have been presented as arrays which prescribe an action for the given queue lengths. Such an array has been allocated to each server position. A bisque entry denotes the action to serve, grey pertains to idling and light blue initiates a switch-over.

Figure~\ref{fig: smdp asymmetric var} shows that the SMDP policy idles one customer less when the server is at queue 1 as compared to the symmetric CTMDP policy of figure~\ref{fig: ctmdp asymmetric var}. The SMDP policy has identified queue 1 to be unreliable. For the polling model with a slow-mode, it is seen that both policies have a cut-off at queue 2 where they switch over to the priority queue. This is very similar to the heuristic policy \cite{duenyas1996heuristic} of algorithm~\ref{algorithm: heuristic policy}. This cut-off is not a clearly defined index though and it differs between the two MDP policies. Furthermore, idling at queue 1 differs as well.

\begin{figure}[!tbp]
  \centering
  \begin{minipage}[b]{0.43\textwidth}
    \includegraphics[width=\textwidth]{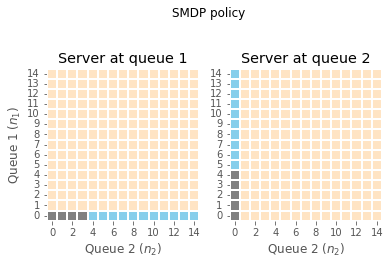}
    \caption{SMDP policy for polling model with asymmetric variance.}
    \label{fig: smdp asymmetric var}
  \end{minipage}
  \hfill
  \begin{minipage}[b]{0.43\textwidth}
    \includegraphics[width=\textwidth]{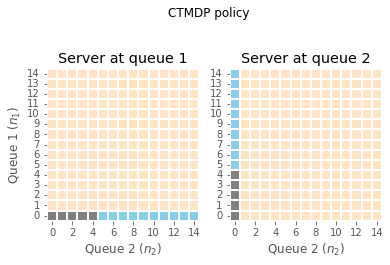}
    \caption{CTMDP policy for polling model with asymmetric variance.}
    \label{fig: ctmdp asymmetric var}
  \end{minipage}
\end{figure}

\begin{figure}[!tbp]
  \centering
  \begin{minipage}[b]{0.43\textwidth}
    \includegraphics[width=\textwidth]{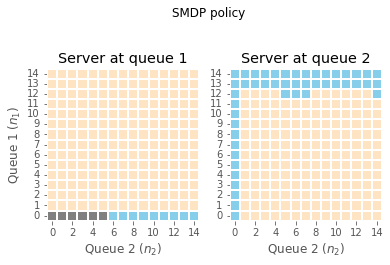}
    \caption{SMDP policy for polling model with slow mode.}
    \label{fig: smdp slow mode}
  \end{minipage}
  \hfill
  \begin{minipage}[b]{0.43\textwidth}
    \includegraphics[width=\textwidth]{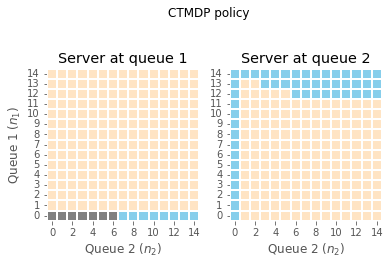}
    \caption{CTMDP policy for polling model with slow mode.}
    \label{fig: ctmdp slow mode}
  \end{minipage}
\end{figure}

It should be noted that both scenarios have used gamma distributions with a variance less than or equal to that of an exponential distribution. In fact, these gamma distributions have comparably small variances and short tails. This would appear to correlate with the fact that the SMDP policies (which seem to be able to account for variance and tailedness of distributions) idle less. To further investigate this possible relationship, a polling model was constructed that had event distributions with the same mean as an exponential distribution but higher variance and longer tails. The idea is to see whether the SMDP policy would idle more this time round. Exponential distributions have a \emph{coefficient of variation} equal to one $CV=\sigma /\bar{x}$. Most typical distributions have a $CV < 1$. However, setting the shape parameter $k$ to less than one in Gam($k$,$\theta$) results in $CV >1$. The SMDP policy for a slow-mode polling model that has distributions with $CV>1$ is presented in figure~\ref{fig:smdp_slow_mode_high_var}. It is observed to idle more than the CTMDP policy of figure~\ref{fig: ctmdp slow mode} which is on contrast to the SMDP policy obtained for low-variance short-tailed distributions from figure~\ref{fig: smdp slow mode}. This suggests that higher variance and long-tailedness in event durations lead to more idling.

A possible explanation for this risk-adverse behaviour follows. As the risk of transitioning to worse future states is increased (while maintaining the same expected cost of the outcome) through increasing the variance and tail of the distribution so does the certainty in idling make it a more attractive low-risk option. Idling guarantees only one customer arrival (of either class) and can only transition to one of two future states.

Lastly, note that the higher variance SMDP policy also adopted a more similar switching threshold at queue 2 to that of the CTMDP policy.

\begin{figure}[!htbp]
    \centering
    \includegraphics[width=0.5\textwidth]{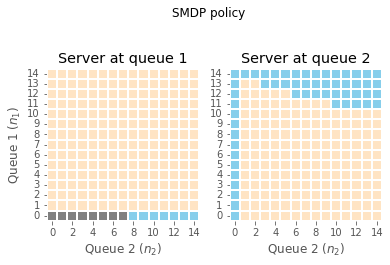}
    \caption{SMDP policy for a polling model with a slow mode and high variance distributions.}
    \label{fig:smdp_slow_mode_high_var}
\end{figure}

\subsubsection{Empirical distributions}\label{results: empirical dist}
These are the empirical distributions of the system-based performance that will be used by the parametric Welch's $t$-test and the non-parametric Mann-Whitney $U$-test. All distributions consists of $M=10000$ samples which have been generated using common random numbers. This translates to 10000 shared random seeds $\zeta_k$ used to initiate each sample. Furthermore, each sampled trajectory is of length $T=200$ such that any future sampled costs is almost negligible due to a discount of $\exp(-0.05\times200) = 4.54\times 10^{-5}$. 

The first scenario polling model is completely symmetric in terms of deterministic and expected values, the heuristic policy is not applicable and was not included in the experiment as seen in figure~\ref{fig: dist asymmetric var}. It was, however, included in the slow-mode scenario of figure~\ref{fig: dist slow mode} where it would appear to have outperformed the exhaustive policy.

\begin{figure}[!htbp]
  \centering
  \begin{minipage}[b]{0.49\textwidth}
    \includegraphics[width=\textwidth]{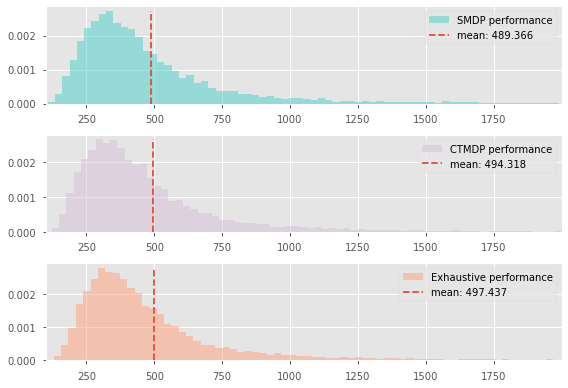}
    \caption{Empirical distributions $\hat{\eta}_\pi$ of system-based performance for the polling model with asymmetric variance.}
    \label{fig: dist asymmetric var}
  \end{minipage}
  \hfill
  \begin{minipage}[b]{0.49\textwidth}
    \includegraphics[width=\textwidth]{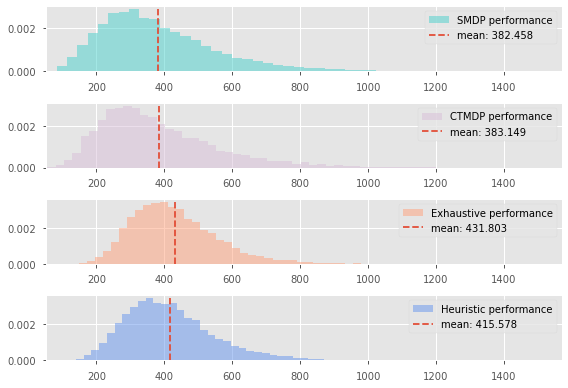}
    \caption{Empirical distributions $\hat{\eta}_\pi$ of system-based performance for the polling model with a slow-mode.}
    \label{fig: dist slow mode}
  \end{minipage}
\end{figure}

Summary statistics of these distributions have been provided in table~\ref{tab:summary stats asymmetric variance} and table~\ref{tab:summary stats slow-mode}. These tables also include the $k^2$ value for the D' Agostino's normality test of section~\ref{section:normality test} along with the $p$-values and whether the null hypothesis \emph{fails to be rejected}. All distributions had their null hypothesis rejected. The same conclusion could have been reached by graphical inspection of the  plotted distributions. This lack of normality puts the reliability of the Welch's $t$-test under question. However, as mentioned in section~\ref{section: welch}, robustness against skewed distributions with large sample size has been observed \cite{robust_t}. This paper considers $M=10000$ large. Nonetheless, it is good practice to include a Mann-Whitney $U$-test that does not rely on the normality assumption even though it does not strictly asses the difference in means.

\begin{table}[!htbp]
    \centering
    \begin{tabular}{llllllllll}
\toprule
\textbf{Policies} & \textbf{mean} &      \textbf{std} &      \textbf{min} &       \textbf{max} & \textbf{skewness} & \textbf{kurtosis} &         $k^2$ &    $p$ & \textbf{normal} \\
\midrule
SMDP &  489.366 &  345.952 &  106.848 &  6717.123 &    4.393 &   35.727 &   9349.427 &  0.0 &  False \\
CTMDP &  494.318 &  366.136 &  122.499 &  6142.736 &      4.4 &   32.169 &   9256.498 &  0.0 &  False \\
Exh. &  497.437 &  373.541 &  102.553 &  8986.153 &    5.959 &   74.359 &  11713.347 &  0.0 &  False \\
\bottomrule
\end{tabular}
    \caption{Summary statistics of $\hat{\eta}_\pi$ for the polling model with asymmetric variance.}
    \label{tab:summary stats asymmetric variance}
\end{table}

\begin{table}[!htbp]
    \centering
    \begin{tabular}{llllllllll}
\toprule
\textbf{Policies} & \textbf{mean} &      \textbf{std} &      \textbf{min} &       \textbf{max} & \textbf{skewness} & \textbf{kurtosis} &         $k^2$ &    $p$ & \textbf{normal} \\
\midrule
SMDP &  382.458 &  172.742 &  52.785 &  1569.504 &    1.168 &    2.166 &  1994.955 &  0.0 &  False \\
CTMDP &  383.149 &  173.884 &  54.594 &  1298.574 &    1.163 &      1.7 &  1863.162 &  0.0 &  False \\
Exh. &  431.803 &  130.958 &  124.35 &  1312.551 &    1.086 &     2.42 &  1911.637 &  0.0 &  False \\
Heur.  &  415.578 &  132.644 &  93.731 &  1234.293 &    0.906 &    1.518 &  1355.903 &  0.0 &  False \\
\bottomrule
\end{tabular}
    \caption{Summary statistics of $\hat{\eta}_\pi$ for the polling model with a slow-mode.}
    \label{tab:summary stats slow-mode}
\end{table}

All hypothesis tests assume that no pair-wise sample dependence between distributions exist. As to prevent any such dependence due to shared random numbers, all empirical distributions were randomly shuffled and the \emph{Pearson correlation coefficient} $r_{x,y}$ was tested for. This coefficient is calculated as
\begin{equation}
    r_{x,y} = \frac{\sum_{i=1}^M (x_i-\bar{x})(y_i-\bar{y})}{\left(\sum_{i=1}^M (x_i-\bar{x})^2(y_i-\bar{y})^2\right)^{\nicefrac{1}{2}}}
\end{equation}
where $x$ and $y$ are samples of distributions. Tables~\ref{tab:corr asymmetric variance} and ~\ref{tab:corr slow-mode} present $r_{x,y}$ for both polling models. The correlation is observed to be low such that all hypothesis tests can be used without hesitating in this regard.

\begin{table}[!htbp]
    \centering
    \begin{tabular}{lrrr}
\toprule
\diagbox{$\hat{\eta}_{\pi}$}{$\hat{\eta}_{\pi'}$} &  SMDP &  CTMDP &  Exh. \\
\midrule
SMDP &   1.000 &  -0.006 &   0.013 \\
CTMDP &  -0.006 &   1.000 &   0.004 \\
Exh. &   0.013 &   0.004 &   1.000 \\
\bottomrule
\end{tabular}
    \caption{Pearson correlation coefficient between $\hat{\eta}_\pi$ for the polling model with asymmetric variance.}
    \label{tab:corr asymmetric variance}
\end{table}

\begin{table}[!htbp]
    \centering
    \begin{tabular}{lrrrr}
\toprule
\diagbox{$\hat{\eta}_{\pi}$}{$\hat{\eta}_{\pi'}$} &  SMDP &  CTMDP &  Exh. & Heur.\\
\midrule
SMDP &   1.000 &  -0.006 &  -0.009 &  -0.013 \\
CTMDP &  -0.006 &   1.000 &  -0.014 &   0.007 \\
Exh. &  -0.009 &  -0.014 &   1.000 &  -0.008 \\
Heur. &  -0.013 &   0.007 &  -0.008 &   1.000 \\
\bottomrule
\end{tabular}
    \caption{Pearson correlation coefficient between $\hat{\eta}_\pi$ for the polling model with a slow mode.}
    \label{tab:corr slow-mode}
\end{table}

\subsubsection{Empirical difference distributions}\label{results: empirical diff dist}

It is useful to assess the difference distributions $\Delta \hat{\eta}_{\pi,\pi'} = \hat{\eta}_{\pi} \ominus \hat{\eta}_{\pi}$ as these might resemble normality better than their parent distributions. Occasionally, they may even fail to reject the null hypothesis in the normality tests. These distributions are used in a single-sample Student's $t$-test (see section~\ref{section: t test}) as to assess the null hypothesis that the mean is zero. Some of the difference distributions have been plotted in figure~\ref{fig: diff dist asymmetric variance} and figure~\ref{fig: diff dist slow-mode}. Normal distributions have been fitted to each data-set and as to visually aid in assessing normality. Normality clearly does not hold in the symmetric distributions of the polling model with asymmetric variance. Inspection does not yield such clear conclusions for the polling model with a slow-mode. As such, tables~\ref{tab:summary stats diff asymmetric var} and \ref{tab:summary stats diff slow-mode} provide both the summary statistics of the difference distributions and the relevant results from their normality tests. It is seen that despite the distributions of figure~\ref{fig: diff dist slow-mode} almost appearing normal, the D' Agostino's test rejects this hypothesis.

\begin{figure}[!htbp]
    \centering
    \includegraphics[width=0.7\textwidth]{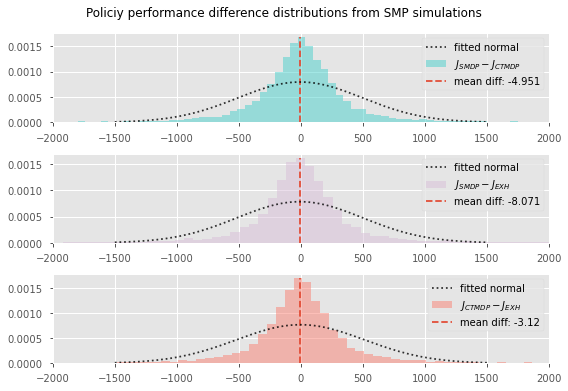}
    \caption{Difference distributions for polling model with asymmetric variance}
    \label{fig: diff dist asymmetric variance}
\end{figure}

\begin{figure}[!htbp]
    \centering
    \includegraphics[width=0.7\textwidth]{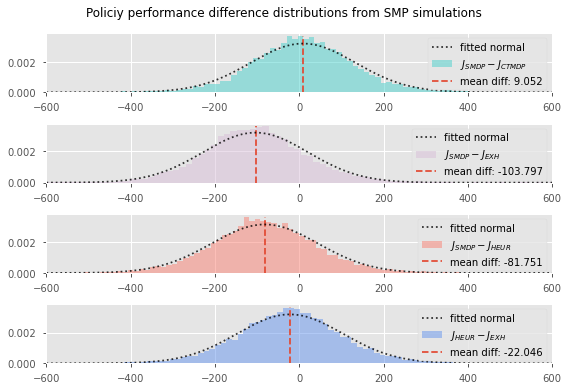}
    \caption{Difference distributions for polling model with a slow mode.}
    \label{fig: diff dist slow-mode}
\end{figure}

\begin{table}[!htbp]
    \centering
    \begin{tabular}{llllllllll}
\toprule
\textbf{Policies} & \textbf{mean} &      \textbf{std} &      \textbf{min} &       \textbf{max} & \textbf{skewness} & \textbf{kurtosis} &         $k^2$ &    $p$ & \textbf{normal} \\
\midrule
SMDP - CTMDP& -4.951 &  502.424 & -4550.159 &  6311.849 &    -0.16 &   16.257 &  2276.576 &  0.0 &  False \\
SMDP - Exh.& -8.071 &   506.73 &  -8344.24 &  6458.869 &    -0.85 &   28.045 &  3702.161 &  0.0 &  False \\
CTMDP - Exh.&  -3.12 &  522.135 & -8532.508 &  5864.779 &   -0.688 &   28.235 &  3438.243 &  0.0 &  False \\
\bottomrule
\end{tabular}
    \caption{Summary statistics of $\Delta \hat{\eta}_{\pi,\pi}$ for the polling model with asymmetric variance.}
    \label{tab:summary stats diff asymmetric var}
\end{table}

\begin{table}[!htbp]
    \centering
    \begin{tabular}{llllllllll}
\toprule
\textbf{Policies} & \textbf{mean} &      \textbf{std} &      \textbf{min} &       \textbf{max} & \textbf{skewness} & \textbf{kurtosis} &         $k^2$ &    $p$ & \textbf{normal} \\
\midrule
SMDP - CTMDP&  -0.691 &  244.785 & -1130.633 &  1298.552 &   -0.011 &    0.987 &  191.434 &  0.0 &  False \\
SMDP - Exh.& -49.345 &  216.069 & -1017.106 &  1235.259 &    0.339 &    1.195 &  429.352 &  0.0 &  False \\
SMDP - Heur.&  -33.12 &  220.264 & -1010.303 &  1323.806 &     0.37 &    1.081 &  431.655 &  0.0 &  False \\
Heur. - Exh.& -16.225 &  187.942 & -1053.237 &   904.973 &   -0.054 &    1.076 &  220.262 &  0.0 &  False \\
\bottomrule
\end{tabular}
    \caption{Summary statistics of $\Delta \hat{\eta}_{\pi,\pi}$ for the polling model with a slow-mode.}
    \label{tab:summary stats diff slow-mode}
\end{table}

\subsubsection{Hypothesis tests}\label{results: hypothesis tests}

The Welch's $t$-test assesses the null hypothesis
\begin{equation}
    H_0^W: \quad \mathbb{E}\left[ \hat{\eta}_\pi \right] = \mathbb{E}\left[ \hat{\eta}_{\pi'} \right] \label{eq: null welch}
\end{equation}
as to be rejected in favour of the alternative
\begin{equation}
    H_A^W: \quad \mathbb{E}\left[ \hat{\eta}_\pi \right] < \mathbb{E}\left[ \hat{\eta}_{\pi'} \right]. \label{eq: alt welch}
\end{equation}
These results of these tests can be found in tables~\ref{tab:welch asymmetric var} and \ref{tab:welch slow-mode} and were conducted using a significance level $\zeta=0.05$. Rejecting the null hypothesis is rare and has been highlighted for ease of reading.

\begin{table}[!htbp]
\footnotesize
    \centering
    \begin{tabular}{llllllllll}
\toprule
\multirow{2}{*}{\diagbox{$\hat{\eta}_{\pi}$}{$\hat{\eta}_{\pi'}$}} & \multicolumn{3}{c}{\textbf{SMDP}} & \multicolumn{3}{c}{\textbf{CTMDP}} & \multicolumn{3}{c}{\textbf{Exh.}}\\
\cmidrule(lr){2-4} 
\cmidrule(lr){5-7}
\cmidrule(lr){8-10}
{} & $t$  & $p$  & reject $H_0$ & $t$  & $p$  & reject $H_0$ & $t$ & $p$  & reject $H_0$ \\
\midrule
\textbf{SMDP}&        0.0 &        0.5 &              False &     -0.983 &      0.163 &              False &     -1.585 &      0.056 &              False \\
\textbf{CTMDP} &      0.983 &      0.837 &              False &        0.0 &        0.5 &              False &     -0.596 &      0.275 &              False \\
\textbf{Exh.} &      1.585 &      0.944 &              False &      0.596 &      0.725 &              False &        0.0 &        0.5 &              False \\
\bottomrule
\end{tabular}
    \caption{Welch's $t$-test for the polling model with asymmetric variance using a significance level $\zeta=0.05$}
    \label{tab:welch asymmetric var}
\end{table}

\begin{table}[!htbp]
\footnotesize
    \centering
    \begin{tabular}{lllllllllllll}
\toprule
\multirow{2}{*}{\diagbox{$\hat{\eta}_{\pi}$}{$\hat{\eta}_{\pi'}$}} & \multicolumn{3}{c}{\textbf{SMDP}} & \multicolumn{3}{c}{\textbf{CTMDP}} & \multicolumn{3}{c}{\textbf{Exh.}} & \multicolumn{3}{c}{\textbf{Heur.}}\\
\cmidrule(lr){2-4} 
\cmidrule(lr){5-7}
\cmidrule(lr){8-10}
\cmidrule(lr){11-13}
{} & $t$  & $p$  & reject $H_0$ & $t$  & $p$  & reject $H_0$ & $t$ & $p$  & reject $H_0$ & $t$ & $p$  & reject $H_0$ \\
\midrule

\textbf{SMDP} &        0.0 &        0.5 &              False &     -0.282 &      0.389 &              False &    -22.762 &        0.0 &               \textbf{True} &    -15.206 &        0.0 &               \textbf{True} \\
\textbf{CTMDP} &      0.282 &      0.611 &              False &        0.0 &        0.5 &              False &     -22.35 &        0.0 &               \textbf{True} &    -14.827 &        0.0 &               \textbf{True} \\
\textbf{Exh.} &     22.762 &        1.0 &              False &      22.35 &        1.0 &              False &        0.0 &        0.5 &              False &      8.704 &        1.0 &              False \\
\textbf{Heur.} &     15.206 &        1.0 &              False &     14.827 &        1.0 &              False &     -8.704 &        0.0 &               \textbf{True} &        0.0 &        0.5 &              False \\

\bottomrule
\end{tabular}
    \caption{Welch's $t$-test for the polling model with a slow mode using a significance level $\zeta=0.05$.}
    \label{tab:welch slow-mode}
\end{table}

The Mann-Whitney $U$-test assesses the null hypothesis 
\begin{equation}
    H_0^U: \quad \hat{\eta}_\pi \stackrel{s.t.}{=} \hat{\eta}_{\pi} \label{eq: mann null}
\end{equation}
as to be rejected against the alternative hypothesis
\begin{equation}
    H_A^U: \quad \hat{\eta}_\pi \stackrel{s.t.}{<} \hat{\eta}_{\pi}\label{eq: mann alt}.
\end{equation}
Stochastically smaller is the same as 
\begin{equation}
 \forall y \in \mathcal{Y}: \quad  \int_{0}^y \hat{\eta}_{\pi}(\xi)\,d\xi > \int_{0}^y \hat{\eta}_{\pi'}(\xi)\,d\xi
\end{equation}
where $\mathcal{Y} \subseteq \mathbb{R}_{\geq 0}$ is the support of the empirical probability density functions. The results of these non-parametric test are presented in tables~\ref{tab:mann asymmetric var} and \ref{tab:mann slow-mode}.

\begin{table}[!htbp]
\footnotesize
    \centering
    \begin{tabular}{llllllllll}
\toprule
\multirow{2}{*}{\diagbox{$\hat{\eta}_{\pi}$}{$\hat{\eta}_{\pi'}$}} & \multicolumn{3}{c}{\textbf{SMDP}} & \multicolumn{3}{c}{\textbf{CTMDP}} & \multicolumn{3}{c}{\textbf{Exh.}}\\
\cmidrule(lr){2-4} 
\cmidrule(lr){5-7}
\cmidrule(lr){8-10}
{} & $t$  & $p$  & reject $H_0$ & $t$  & $p$  & reject $H_0$ & $t$ & $p$  & reject $H_0$ \\
\midrule
\textbf{SMDP} &  50000000.0 &        0.5 &              False &  50092127.0 &      0.589 &              False &  49185078.0 &      0.023 &               True \\
\textbf{CTMDP} &  49907873.0 &      0.411 &              False &  50000000.0 &        0.5 &              False &  49085298.0 &      0.013 &               True \\
\textbf{Exh.} &  50814922.0 &      0.977 &              False &  50914702.0 &      0.987 &              False &  50000000.0 &        0.5 &              False \\
\bottomrule
\end{tabular}
    \caption{Mann-Whitney $U$-test for the polling model with asymmetric variance using a significance level $\zeta=0.05$}
    \label{tab:mann asymmetric var}
\end{table}

\begin{table}[!htbp]
    \tiny
    \centering
    \begin{tabular}{lllllllllllll}
\toprule
\multirow{2}{*}{\diagbox{$\hat{\eta}_{\pi}$}{$\hat{\eta}_{\pi'}$}} & \multicolumn{3}{c}{\textbf{SMDP}} & \multicolumn{3}{c}{\textbf{CTMDP}} & \multicolumn{3}{c}{\textbf{Exh.}} & \multicolumn{3}{c}{\textbf{Heur.}}\\
\cmidrule(lr){2-4} 
\cmidrule(lr){5-7}
\cmidrule(lr){8-10}
\cmidrule(lr){11-13}
{} & $t$  & $p$  & reject $H_0$ & $t$  & $p$  & reject $H_0$ & $t$ & $p$  & reject $H_0$ & $t$ & $p$  & reject $H_0$ \\
\midrule

\textbf{SMDP} &  50000000.0 &        0.5 &              False &  50129271.0 &      0.624 &              False &  37607261.0 &        0.0 &               \textbf{True} &  40861694.0 &        0.0 &               \textbf{True} \\
\textbf{CTMDP} &  49870729.0 &      0.376 &              False &  50000000.0 &        0.5 &              False &  37330408.0 &        0.0 &               \textbf{True} &  40592699.0 &        0.0 &               \textbf{True} \\
\textbf{Exh.} &  62392739.0 &        1.0 &              False &  62669592.0 &        1.0 &              False &  50000000.0 &        0.5 &              False &  53831237.0 &        1.0 &              False \\
\textbf{Heur.} &  59138306.0 &        1.0 &              False &  59407301.0 &        1.0 &              False &  46168763.0 &        0.0 &               \textbf{True} &  50000000.0 &        0.5 &              False \\
\bottomrule
\end{tabular}
    \caption{Mann-Whitney $U$-test for the polling model with a slow mode using a significance level $\zeta=0.05$.}
    \label{tab:mann slow-mode}
\end{table}

The Student's $t$-test is performed on the difference distributions and assesses the null hypothesis
\begin{eqnarray}
    H_0^t : \quad \mathbb{E}  \left[ \hat{\eta}_{\pi,\pi'}   \right] = 0 \label{eq: ttest null}
\end{eqnarray}
as to be rejected for the alternative hypothesis
\begin{eqnarray}
    H_A^t : \quad \mathbb{E}  \left[ \hat{\eta}_{\pi,\pi'}   \right] < 0 \label{eq: ttest alt}.
\end{eqnarray}
The results of these tests are presented in tables~\ref{tab:ttest asymmetric var} and \ref{tab:ttest slow-mode}.

\begin{table}[!htbp]
\footnotesize
    \centering
    \begin{tabular}{llllllllll}
\toprule
\multirow{2}{*}{\diagbox{$\hat{\eta}_{\pi}$}{$\hat{\eta}_{\pi'}$}} & \multicolumn{3}{c}{\textbf{SMDP}} & \multicolumn{3}{c}{\textbf{CTMDP}} & \multicolumn{3}{c}{\textbf{Exh.}}\\
\cmidrule(lr){2-4} 
\cmidrule(lr){5-7}
\cmidrule(lr){8-10}
{} & $t$  & $p$  & reject $H_0$ & $t$  & $p$  & reject $H_0$ & $t$ & $p$  & reject $H_0$ \\
\midrule
\textbf{SMDP} &        0.0 &        0.5 &              False &     -0.988 &      0.162 &              False &     -1.581 &      0.057 &              False \\
\textbf{CTMDP} &      0.986 &      0.838 &              False &       -0.0 &        0.5 &              False &     -0.592 &      0.277 &              False \\
\textbf{Exh.} &      1.589 &      0.944 &              False &      0.596 &      0.724 &              False &        0.0 &        0.5 &              False \\
\bottomrule
\end{tabular}
    \caption{Student's $t$-test for the polling model with asymmetric variance using a significance level $\zeta=0.05$}
    \label{tab:ttest asymmetric var}
\end{table}

\begin{table}[!htbp]
\footnotesize
    \centering
    \begin{tabular}{lllllllllllll}
\toprule
\multirow{2}{*}{\diagbox{$\hat{\eta}_{\pi}$}{$\hat{\eta}_{\pi'}$}} & \multicolumn{3}{c}{\textbf{SMDP}} & \multicolumn{3}{c}{\textbf{CTMDP}} & \multicolumn{3}{c}{\textbf{Exh.}} & \multicolumn{3}{c}{\textbf{Heur.}}\\
\cmidrule(lr){2-4} 
\cmidrule(lr){5-7}
\cmidrule(lr){8-10}
\cmidrule(lr){11-13}
{} & $t$  & $p$  & reject $H_0$ & $t$  & $p$  & reject $H_0$ & $t$ & $p$  & reject $H_0$ & $t$ & $p$  & reject $H_0$ \\
\midrule

\textbf{SMDP}&       -0.0 &        0.5 &              False &     -0.281 &      0.389 &              False &    -22.533 &        0.0 &               \textbf{True} &    -15.152 &        0.0 &               \textbf{True} \\
\textbf{CTMDP} &      0.282 &      0.611 &              False &        0.0 &        0.5 &              False &    -22.359 &        0.0 &               \textbf{True} &    -14.718 &        0.0 &               \textbf{True} \\
\textbf{Exh.} &     22.824 &        1.0 &              False &     22.392 &        1.0 &              False &        0.0 &        0.5 &              False &      8.713 &        1.0 &              False \\
\textbf{Heur.} &     15.088 &        1.0 &              False &     14.807 &        1.0 &              False &       -8.7 &        0.0 &               \textbf{True} &       -0.0 &        0.5 &              False \\

\bottomrule
\end{tabular}
    \caption{Student's $t$-test for the polling model with a slow mode using a significance level $\zeta=0.05$.}
    \label{tab:ttest slow-mode}
\end{table}
\break

All tests reach the same conclusions:
\begin{enumerate}
    \item The SMDP policy consistently outperforms the CTMDP policy but this margin is never deemed statistically significant.
    \item For the polling with asymmetric variance, no policy can be identified as the best in a statistically significant sense.
    \item For the polling model with a slow mode, both the SMDP and CTMDP policies outperform the exhaustive and heuristic policy by a margin that is statistically significant.
    \item For the polling model with a slow mode, the heuristic policy outperforms the exhaustive policy by a margin that is statistically significant.
\end{enumerate}
It can be said that the additional computational budget of the MDP policies does not provide a significant advantage over the very cheap exhaustive policy in the symmetric polling model with asymmetric variance.

\subsection{Other statistics}

\subsubsection{Embedded chain stationary distribution}% include bow-tie curve

The SMDP models have made use of $\mathbf{P}_{\pi}$ which is the embedded chain (recall definition~\ref{def: embedded chain}) of the system under a policy. If follows that the stationary distribution of the embedded chain $\widetilde{\phi}$ can be obtained as in definition~\ref{def: stationary dist} assuming the system is ergodic (see definition~\ref{def: ergodic}). This may be a very large system of linear equations to solve exactly. Simulation can be used as an alternative using a very long trajectory $\mathcal{T}$ obtained from a SMP operating under $\pi$. Additionally, simulation does not from suffer issues related to state-space truncation. Algorithm~\ref{algorithm: embedded phi} presents such means of processing a simulated trajectory.

\begin{algorithm}[!htbp]\label{algorithm: embedded phi}
    \caption{Empirical embedded stationary distribution}
    \begin{algorithmic}[1]
    \Procedure{EmbeddedStationary}{$\mathcal{T}$,$B$}
    \State $\mathcal{T} \leftarrow \mathcal{T}[B:]$ \Comment{Discard first $B$ samples.}
    \State $N,N_1,N_2 \leftarrow 0,0,0$
    \State $\phi = \{\}$ \Comment{An empty look-up/hash table.}
    \For{$(\widetilde{x}_i,l_2^i,c_i^\beta,\Delta t_i,t_i) \in \mathcal{T}$}
        \If{$\widetilde{x}_i \not \in \phi\mbox{.keys}$}
            \State $\phi[\widetilde{x}_i] = 0$ \Comment{Add entry to the hash table.}
        \EndIf
        \State $N \leftarrow N + 1$
        \State $\alpha \leftarrow \nicefrac{1}{N}$ \Comment{Update weight.}
        \State $\phi[\widetilde{x}_i] \leftarrow \phi[\widetilde{x}_i] (1-\alpha) + \alpha $ \Comment{Mean update.}
        \State $x_1^i,x_2^i,l_1^i \leftarrow \widetilde{x}_i$ \Comment{Unpack state into components.}
        \State $N_1 \leftarrow N_1 + \mathbbm{1}\{l_1^i=1 \}$
        \State $N_2 \leftarrow N_2 + \mathbbm{1}\{l_1^i=2 \}$
    \EndFor
    \State \Return $\phi,N_1,N_2$
    \EndProcedure
    \end{algorithmic}
\end{algorithm}

Algorithm~\ref{algorithm: embedded phi} returns $\widetilde{\phi}$ as would solving the system of linear equations. Its output is, however, in the form of a hash table from which it is possible to construct a vector from as to mimic the linear equations solution. Each entry in $\widetilde{\phi}$ represents the joint-probability $\mathscr{P}(x_1,x_2,l_1) = \widetilde{\phi}(x_1,x_2,l_2)$. The simulation approach provides the number of times the server was present at each queue. Hence, $\widetilde{\phi}(l_i)= \mathscr{P}(l_1=i) = N_i/(N_1+N_2)$ such that each queue can have its own stationary distribution analysed 
\begin{equation}
    \widetilde{\phi}(x_1,x_2\mid l_1=i) = \frac{\widetilde{\phi}(x_1,x_2, l_1=i)}{\widetilde{\phi}(l_1=i)}.
\end{equation}
Of particular interest is the stationary distribution only over queue lengths
\begin{eqnarray}
    \widetilde{\phi}(x_1,x_2) & = & \widetilde{\phi}(x_1,x_2\mid l_1=1)\widetilde{\phi}(l_1=1) + \widetilde{\phi}(x_1,x_2\mid l_1=2)\widetilde{\phi}(l_1=2)\\
    & = & \widetilde{\phi}(x_1,x_2,l_1=1) + \widetilde{\phi}(x_1,x_2,l_1=2)
\end{eqnarray}
as this can be analysed along with the theoretically optimal limit-cycle of section~\ref{method: polling model scenarios} and \cite{van_eekelen}. Figure~\ref{fig:optimal limit cycle vs phi} illustrates such as approach for the symmetric polling model with asymmetric variance under the optimal SMDP policy.

\begin{figure}[!htbp]
    \centering
    \includegraphics[width=0.7\textwidth]{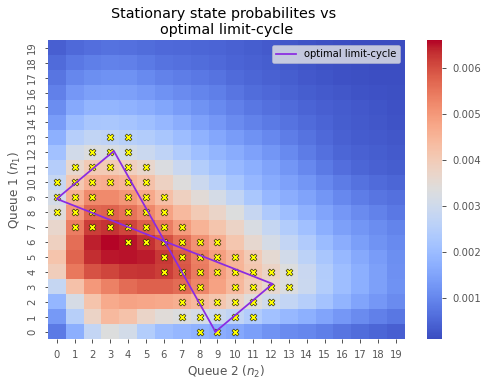}
    \caption{Optimal limit cycle as a pure bow-tie curve and $\widetilde{\phi}(x_1,x_2)$.}
    \label{fig:optimal limit cycle vs phi}
\end{figure}

As the optimal limit cycle does not prescribe an integer trajectory, this paper has assumed that a point in the limit cycle $(x_1,x_2)$ permits the following integer coordinates as part of the cycle
\begin{eqnarray}
    \mathbb{Z}(x_1,x_2) & = & \big\{ 
    \left(\lfloor x_1 \rfloor,\lfloor x_2 \rfloor \right) , \left(\lfloor x_1 \rfloor, \lceil x_2 \rceil \right) , 
    \left( \lceil x_1 \rceil, \lfloor x_2  \rfloor   \right) , \left( \lceil x_1 \rceil, \lceil x_2  \rceil   \right)
    \big\}
\end{eqnarray}
where $\lceil\cdot \rceil$ and $\lfloor\cdot \rfloor$ are the ceiling and floor operators, respectively. This set of unique integer coordinates $\mathbb{Z}(\mathcal{Y}_1,\mathcal{Y}_2) = \{ \mathbb{Z}(y_1,y_2): y_1 \in \mathcal{Y}_1,y_2 \in \mathcal{Y}_2 \}$ has been plotted as yellow crosses in figure~\ref{fig:optimal limit cycle vs phi}. It is assumed that $\mathcal{Y}_i$ is some finite grid as to represent the continuous value coordinates of the limit cycle. The probability of the system being in the optimal limit cycle is then obtained 
\begin{equation}
    \phi^* = \sum_{(x_1,x_2) \in \mathbb{Z}(\mathcal{Y}_1,\mathcal{Y}_2)} \widetilde{\phi}(x_1,x_2).
\end{equation}
The symmetric polling system with asymmetric variance operating under the optimal SMDP policy obtains $\phi^* = 0.334$. Hence, a policy spends majority of its time prescribing "corrective" actions as to steer the system towards an optimal limit cycle. This was noted in chapter 5 of \cite{van_eekelen} where a separate stabilising policy was used to prescribe such "corrective" actions. This was, however, a sub-optimal feedback policy. An optimal version can be found in \cite{van_zwieten_two_queue} where a quadratic program must be solved. A MDP policy is globally optimal and performs both corrective and optimal limit cycle actions.

Another interesting stationary probability, relevant to the next section, is that which pertains to how frequently actions $a$ are executed given that a server is at queue $i$

\begin{eqnarray}
    \widetilde{\phi}(l_2=a\mid l_1=i) & = & \sum_{x_1 \in \mathcal{X}_1} \sum_{x_2 \in \mathcal{X}_2} \widetilde{\phi}(x_1,x_2\mid l_1=i) \mathbbm{1}_{\{\pi(x_1,x_2,i)=a\}}. \label{eq: phi action per queue}
\end{eqnarray}
The probability of executing an action follows as
\begin{equation}
    \widetilde{\phi}(l_2=a) = \widetilde{\phi}(l_2=a\mid l_1=1)\widetilde{\phi}(l_1=1) + \widetilde{\phi}(l_2=a\mid l_1=2)\widetilde{\phi}(l_1=2)
\end{equation}
while a joint-probability can also be obtained
\begin{equation}
    \widetilde{\phi}(l_1=i,l_2=a) = \widetilde{\phi}(l_2=a\mid l_1=i) \widetilde{\phi}(l_1=i)
\end{equation}

\subsubsection{Overall stationary distribution} % compare to minimum stability requirements

A necessary criteria for stability was presented in definition~\ref{def: statbility criteria}. A MDP policy can be assessed to see whether it meets this criteria after it has been solved for. This criteria essentially states that a server have the opportunity to be free or process no work. During this time it can idle or, more importantly, perform switch-overs. Hence, $\rho \leq \phi(l_2=1) < 1$ where $\phi(l_2=1)$ is the overall stationary probability of performing service. It is not a stationary probability of the embedded chain $\phi(l_2=1) \neq \widetilde{\phi}(l_2=1)$. As discussed on page~\pageref{eq: overall stationary semi-markov}, it represents the probability of finding the system performing a service at any given time of inspection. It follows that it represent the portion of time spent performing service and thus work.

The overall stationary probability can be computed using equation~(\ref{eq: overall stationary semi-markov}). Given the total event set of actions for each queue $\mathcal{E}_1 = \{ \mu_1,s_{1,2},\Lambda_1\}$ and $\mathcal{E}_2 = \{ \mu_2,s_{2,1},\Lambda_2 \}$ (see section~\ref{section: notation}) the portion of time the server spends working follows:
\begin{equation}
    \phi(l_2=1) = \frac{\widetilde{\phi}(l_2=1\mid l_1=1) \mathbb{E}\left[ t_{\mu_1}  \right]+\widetilde{\phi}(l_2=1\mid l_1=2) \mathbb{E}\left[ t_{\mu_2}  \right] }{\widetilde{\phi}(l_2=1\mid l_1=1)\left(\sum_{e\in\mathcal{E}_1} \mathbb{E}\left[ t_{e}  \right]\right) + \widetilde{\phi}(l_2=1\mid l_1=2)\left(\sum_{e\in\mathcal{E}_2} \mathbb{E}\left[ t_{e}  \right]\right)}
\end{equation}. \label{eq: prob of work}
For the symmetric polling model with asymmetric variance $\rho=0.64$ and $\phi(l_2=1) \approx 0.7 > \rho$ such that it was stable. The polling model with a slow mode had $\rho = 0.35$ and $\phi(l_2=1) \approx 0.51 > \rho$ such that it was stable as well. In some instances it might be more convenient to obtain $\phi(l_2=1)$ directly from simulation using a procedure as outlined in algorithm~\ref{algorithm: overall phi}. The full output from this algorithm can be used to obtain various other stationary probabilities as well. The probability of interest from this section follows by consulting the hash-table $\phi(l_2=1) = \phi[1]$.

\begin{algorithm}[!htbp]\label{algorithm: overall phi}
    \caption{Empirical stationary probabilities of actions}
    \begin{algorithmic}[1]
    \Procedure{ActionProbabilities}{$\mathcal{T}$,$B$}
    \State $\mathcal{T} \leftarrow \mathcal{T}[B:]$ \Comment{Discard first $B$ samples.}
    \State $T_1 \leftarrow \{0:0,1:0,2:0 \}$ \Comment{Total time of queue 1 actions.}
    \State $T_2 \leftarrow \{0:0,1:0,2:0 \}$ \Comment{Total time of queue 2 actions.}
    \For{$(\widetilde{x}_i,l_2^i,c_i^\beta,\Delta t_i,t_i) \in \mathcal{T}$}
    \State $x_1^i,x_2^i,l_1^i = \widetilde{x}_i$ \Comment{Unpack states-space.}
        \If{$l_1^i=1$}
            \State $T_1[l_2^i] \leftarrow T_1[l_2^i] + \Delta t_i$
        \EndIf
        \If{$l_1^i=2$}
            \State $T_2[l_2^i] \leftarrow T_2[l_2^i] + \Delta t_i$
        \EndIf
    \EndFor
    \State $T \leftarrow \sum_{i=0}^2 T_1[i]+ T_2[i]$ \Comment{Total simulation time.}
    \State $\phi \leftarrow \{ 0: (T_1[0]+T_2[0])/T,1:(T_1[1]+T_2[1])/T,2:(T_1[2]+T_2[2])/T \}$
    \State \Return $\phi,T,T_1,T_2$
    \EndProcedure
    \end{algorithmic}
\end{algorithm}

\newpage

\section{Conclusion}

The most complex model used for controlling a two queue polling system with switch-over durations has been a CTMDP. This paper has shown that under certain assumptions, as found in section~\ref{section: SMDP assumptions}, the polling system of interest can be modelled as a more sophisticated SMDP by using the competing process framework first introduced by Howard \cite{howard1960dynamic} (see \cite{howard2012dynamic} for the modern print). By allowing the service and switch-over durations to follow any arbitrary distribution over $\mathbb{R}_{\geq 0}$, the model of a controlled polling system is another step closer to being a DEDS (see table~\ref{tab:Different Models of DEDS}).

The SMDP approach has provided the following improvements over the CTMDP. Firstly, section~\ref{section: homogeneous arrival rates} has shown that non-homogeneous arrival rates are permitted. Secondly, in the case of homogeneous arrival rates, the SMDP model can capture risk in the probabilistic transitions by accounting for variance, tailedness and overall spread over the general even-duration distributions (see section~\ref{section: SMDP vs CTMDP differences}). This was seen to affect the trade-off between choosing whether to idle or switch as in section~\ref{results: policies}. An intuitive explanation was provided which claims idling (a low-risk decision) to become more attractive as the variance and tailedness of the switch-over duration increased (making it more risk-prone).

The additional modelling power of the SMDP model comes at a cost. Truncation errors (see section~\ref{section: tpm truncation}) need to be monitored and expensive numerical integrals involving the matrix exponential of the generator matrix needs to be performed (see section~\ref{section: smdp transition model} and \ref{section: smdp cost model}) as to build the model. Furthermore, the generator matrix scales poorly with increasing size of the state-space and is a very sparse object. Sparsity may be exploited in future work especially when obtaining the matrix exponential of it. The CTMDP is devoid of this issues. Section~\ref{section: CTMDP} showed that uniformisation and a sparsity exploiting graph structure allows for CTMDPs to be built and solved much more efficiently than the SMDP. A CTMDP can be used to model both a preemptive and non-preemptive polling system. The former would be difficult to implement in the SMDP setting. Fictitious checkpoints might be introduced during service and switch-over events as to allow for preemption. 

The empirical simulations studies of section~\ref{section: experiments} showed that while the SMDP policy produced better mean system-based performance on a SMP simulation than the CTMDP policy, this margin of improvement was not found to be statistically significant. This result questions whether the increased computational costs and modelling issues are worth the non-significant advantage. Both the CTMDP and SMDP were able to outperform the heuristic and exhaustive policies. Putting this all together, suggests that the CTMDP remains the best practical model to use. Further work should focus on extending the CTMDP model as to approximate general distributions (and a GSMDP) using the method of phases as in \cite{younes_GSMP_formalism,younes_GSMPD_solving1,younes_GSMPD_solving2,younes_thesis}. Such a model would be able to account for asynchronous and concurrent dynamics while being more computationally tractable than the less expressive SMDP model of this paper.

\newpage
%\bibliographystyle{unsrtnat}
%\bibliography{references} 
\medskip
\printbibliography

\end{document}